\documentclass{amsart}
\usepackage[T1]{fontenc}
\usepackage{amsmath}
\usepackage{amssymb}
\usepackage{amsfonts}
\usepackage{graphicx}
\usepackage{enumerate}
\usepackage{color}

\newtheorem{theorem}{Theorem}[section]
\newtheorem{lemma}[theorem]{Lemma}

\theoremstyle{definition}
\newtheorem{remark}[theorem]{Remark}
\newtheorem{example}[theorem]{Example}
\newtheorem{definition}[theorem]{Definition}

\newcommand\F{\mathcal{F}}
\newcommand\E{\mathbb{E}}
\newcommand\R{\mathbb{R}}
\newcommand\e{\varepsilon}

\numberwithin{equation}{section}

\begin{document}

\title[Good-$\lambda$ inequalities]{Noncommutative good-$\lambda$ inequalities}

\author{Yong Jiao}
\address{School of Mathematics and Statistics, Central South University, Changsha 410085,
People's Republic of China}
\email{jiaoyong@csu.edu.cn}

\author{Adam Os\k ekowski}
\address{Faculty of Mathematics, Informatics and Mechanics\\
 University of Warsaw\\
Banacha 2, 02-097 Warsaw\\
Poland}
\email{ados@mimuw.edu.pl}

\author{Lian Wu}
\address{School of Mathematics and Statistics, Central South University, Changsha 410085,
People's Republic of China}
\email{wulian@csu.edu.cn}

\thanks{Yong Jiao is supported by the NSFC  (No.11471337, No.11722114). Adam Os\k ekowski is supported by Narodowe Centrum Nauki (Poland), grant DEC-2014/14/E/ST1/00532. Lian Wu is supported by the NSFC (No.11601526)}

\subjclass[2010]{Primary: 46L53. Secondary: 60G42}
\keywords{Good-$\lambda$ inequalities, Noncommutative martingales, Noncommutative tangent sequences, Schur multipliers.}

\begin{abstract}
We propose a novel approach in noncommutative probability, which can be regarded as an analogue of good-$\lambda$ inequalities from the classical case due to Burkholder and Gundy (Acta Math {\bf124}: 249-304,1970). This resolves a longstanding open problem in noncommutative realm. Using this technique, we present new proofs of noncommutative Burkholder-Gundy inequalities, Stein's inequality, Doob's inequality and $L^p$-bounds for martingale transforms; all the constants obtained are of optimal orders. The approach also allows us to investigate the noncommutative analogues of decoupling techniques and, in particular, to obtain new estimates for noncommutative martingales with tangent difference sequences and sums of tangent positive operators. These in turn yield an enhanced version of Doob's maximal inequality for adapted sequences and a sharp estimate for a certain class of Schur multipliers. We also present fully new applications of good-$\lambda$ approach to noncommutative harmonic analysis, including inequalities for differentially subordinate operators motivated by the classical $L^p$-bound for the Hilbert transform and the estimate for the $j$-th Riesz transform on group von Neumann algebras with constants of optimal orders as $p\to\infty.$
\end{abstract}

\maketitle

\section{Introduction}
Good-$\lambda$ inequalities form a powerful tool used in the commutative probability theory and harmonic analysis to establish $L^p$- and $\Phi$-inequalities for various classes of processes and operators. The idea can be formulated as follows: given $0<p<\infty$, in order to prove the moment inequality
\begin{equation}\label{moment}
 ||Y||_{L^p}\leq c_p ||X||_{L^p}
\end{equation}
between two random variables $X$ and $Y$ on some probability space $(\Omega,\F,\mathbb{P})$,
it is enough to find positive parameters $\alpha$, $\beta$, $\delta$ satisfying $\alpha\beta^p<1$ such that
\begin{equation}\label{gl}
 \mathbb{P}\Big(|Y|\geq \beta \lambda,\,|X|\leq \delta \lambda\Big)\leq \alpha \mathbb{P}\big(|Y|\geq \lambda\big)
\end{equation}
for each $\lambda>0$. Then a straightforward integration argument (see \eqref{integrating} and \eqref{integrating1} below) yields \eqref{moment} with $c_p=\delta^{-1}(\beta^{-p}-\alpha)^{-1/p}$. A similar reasoning shows that if $\Phi:[0,\infty)\to [0,\infty)$ is a function satisfying appropriate growth conditions, then the good-$\lambda$ inequality \eqref{gl} implies
$$ \E \big(\Phi(|Y|)\big)\leq C_{\Phi,\alpha,\beta,\delta} \E \big(\Phi(|X|)\big),$$
with some constant $C_{\Phi,\alpha,\beta,\delta}$ depending only on the parameters indicated.

The origin of the approach goes back to the classical works of Burkholder and Gundy \cite{BG}. Though the estimate of the form \eqref{gl} cannot be found there, several related tail bounds proved in that paper can be regarded as predecessors of good-$\lambda$ inequalities.  Probably the first paper where the estimate \eqref{gl} appears explicitly is that of Burkholder \cite{Bu0}. In particular that work contains the proofs, based on the above argument, of Burkholder-Davis-Gundy inequalities, Stein-type bounds  and conditional square function estimates, both for discrete-time martingales and the continuous-time analogues arising in the context of Brownian motion.

The above approach has turned out to be very efficient. Furthermore, very soon after the appearance of \cite{Bu0}, the method of good-$\lambda$ inequalities was applied successfully outside probability theory. For instance, Burkholder \cite{Bu0.5,BG2} used the approach in the study of Hardy spaces associated with harmonic functions on the halfspace $\R^d_+$, while in \cite{Bu1} the method allowed him to study the range of analytic functions  on the unit disc. Muckenhoupt and Wheeden \cite{MW} used good-$\lambda$ inequalities to obtain weighted inequalities for fractional operators, and Coifman and Fefferman \cite{CF} exploited the technique to show estimates for singular integral operators and maximal functions. See also the more recent works of Buckley \cite{Buc} on maximal operators, Aimar et al. \cite{AFM} on the estimates for one-sided singular integrals, Bagby and Kurtz \cite{BK} for a rearrangement-inequality for singular integral operators as well as the works of Ba\~nuelos \cite{Ba} and Ba\~nuelos and Moore \cite{BM} for the study of tight estimates for Riesz transforms and caloric functions. Very recently, Hofmann et al. \cite{Hofmann} used a good-$\lambda$ inequality for the vertical square function to establish square function/non-tangential maximal function estimates for solutions of the homogeneous equation associated with divergence form elliptic operators.

It was also soon realized that the method of good-$\lambda$ inequalities, if applied appropriately (that is, if the parameters $\alpha$, $\beta$, $\delta$ are chosen in a clever manner), leads to best-order constants in many situations. This in particular allowed Hitczenko \cite{Hi} to prove that the $L^p$ constant in the martingale version of Rosenthal's inequality has the optimal order $O(p/\log p)$ as $p\to \infty$, and also enabled him to study $L^p$ estimates for tangent and conditionally independent seqeunces with constants not depending on $p$ in \cite{Hi2}. This ``efficiency phenomenon'' has also been observed in most of the analytic papers mentioned above.

The motivation for the results obtained in this paper comes from  a very natural question concerning the appropriate version of good-$\lambda$ inequalities in the context of noncommutative (or quantum) probability theory. More specifically, we will study this question in the language of noncommutative martingales. This branch of martingale theory has gained a lot of interest in literature in the recent twenty years. Many fundamental inequalities  have been successfully transferred from the classical to the noncommutative setting, often revealing quite surprising facts concerning the shape of the estimates and the sizes of the constants involved (\cite{JX2}). Let us briefly mention here several papers which are fundamental to the area. The work \cite{PX} of Pisier and Xu can be regarded as a starting point of the whole theory: it contains the introduction of the abstract noncommutative setup used in the later works, as well as the formulation of appropriate Burkholder-Gundy and Stein's inequalities. A few years later, Doob's maximal estimate and maximal ergodic theorem were respectively generalized to the noncommutative setting by Junge \cite{J} and Junge and Xu \cite{JX-M}; the appropriate analogues of Burkholder-Rosenthal inequalities were investigated by Junge and Xu in \cite{JX, JX3}. Much effort was put into the understanding of the structure of noncommutative martingales. In particular, the noncommutative analogue of Gundy's decomposition of a martingale was obtained by Parcet and Randrianantoanina in \cite{PR} and a version of Davis' decomposition was found by Perrin in \cite{Pe}; these have been greatly improved in very recent papers \cite{RW,RWX}.
We also refer the reader to the important works on the weak-type versions of the estimates above, given by Randrianantoanina \cite{R1,R2,R3}, certain noncommutative atomic decompositions \cite{BCPY} and its recent improvement together with a John-Nirenberg inequality by Hong and Mei \cite{HM}, and some recent advances regarding algebra atomic decompositions and asymmetric Doob's inequalities by Junge et al \cite{HJP2,HJP1}. Finally, we mention the works \cite{BC12,BCL,BCO,Ji,RW17} for martingale inequalities in the context of various noncommutative symmetric spaces, the articles \cite{JSXZ, JSZ,JSZZ} for the noncommutative analogs of Johnson-Schechtman inequalities, and the very recent paper \cite{JZWZ} for the duality of noncommutative dyadic martingale Hardy space.

The noncommutative extension of good-$\lambda$ inequalities is a longstanding open problem circulating in the noncommutative realm for more than fifteen years; the authors learned it from Quanhua Xu about ten years ago. The following is quoted from \cite[page 181]{BC12}: ``On the other hand, the noncommutative analogue of good-$\lambda$ inequality seems open. Then, in order to prove the noncommutative $\Phi$-moment inequalities we need new ideas''. Similar statement appeared in \cite[page 1577]{RW17}: ``The original proof was primarily based on careful analysis of distribution functions using stopping times and the so-called good-$\lambda$ inequality which are very powerful techniques in
the classical settings. Unfortunately, these techniques are not available in the noncommutative
settings.''
The main contribution of this paper is the extension of the good-$\lambda$ approach to the above noncommutative case. As usual, the first difficulty is how to invent the appropriate \emph{formulation/shape} of the noncommutative version of \eqref{gl}. We shall see that the passage from the commutative to the noncommutative realm enforces certain unexpected ideas. Furthermore, one should expect that in contrast to the classical case, where it is usually quite easy to  verify directly that two specific random variables satisfy the good-$\lambda$ inequality, it might be considerably harder in the noncommutative case to check that two measurable operators are eligible for the method. In other words, the second difficulty we encounter concerns the formulation of proper and universal conditions on the operators which guarantee the validity of the noncommutative good-$\lambda$ inequalities. Of course, such conditions should be verifiable in a rather easy and convenient way. We resolve this issue by proposing a certain set of requirements, which we call \emph{good-$\lambda$ testing conditions}. At the first glance, these requirements might seem complicated and of artificial shape. However, they are applicable in all the relevant settings and their verification is straightforward; furthermore, we offer a substitute, called \emph{strong good-$\lambda$ testing conditions}, which is much simpler and thus easier to be checked, at the cost of being slightly less general.

The paper is organized as follows. The next section contains some basic facts from operator theory which are necessary for our further investigation. Section~3 is devoted to the abstract formulation of noncommutative good-$\lambda$ inequalities. By performing the careful analysis of Burkholder-Gundy estimates, we present the (informal) reasoning which leads us to an appropriate formulation of the method. Then we verify rigorously that the technique is indeed efficient in the noncommutative realm. Section 4 contains applications to fundamental results in the noncommutative martingale theory, obtained earlier by Junge, Pisier, Randrianantoanina and Xu. Namely, as we shall see there, the good-$\lambda$ method offers a new, simpler and unified approach to Burkholder-Gundy, Stein and Doob's inequalities, as well as Burkholder's estimates for martingale transforms. In all the settings, we obtain the bounds with constants of optimal orders. In  Section 5 we investigate $L^p$-inequalities for noncommutative martingales with tangent martingale differences, and sums of tangent positive operators. This area, to the best of our knowledge, has not been studied in literature and we strongly believe that it has far reaching further connections with noncommutative probability and analysis. It is worth saying here that the passage from the classical to the noncommutative $L^p$-estimates for tangent martingales reveals an unexpected phenomenon (which should be compared to a similar behavior of  noncommutative Burkholder-Gundy and Burkholder-Rosenthal inequalities): these estimates hold true in the range $2\leq p<\infty$ only.
We present further interesting applications of the estimates for noncommutative tangent sequences; specifically, we will establish an enhanced version of noncommutative Doob's inequality for adapted sequences and provide  a sharp bound for a certain novel class of Schur multipliers.

We conclude the paper by presenting, in Section 6, several completely new applications of good-$\lambda$ approach in noncommutative harmonic analysis. It allows us to develop the notion of the differential subordination associated with a contractive semigroup on a semifinite von Neumann algebra which is strongly motivated by the classical results on the boundedness of Hilbert transform; we also investigate $L^p$-estimate of the $j$-th Riesz transform on group von Neumann algebras. Again, the constants are of optimal order as $p\to\infty$ in both cases. Our final application is to study square-function estimates for contractive semigroups on von Neumann algebras which improves the orders of constants in \cite[Theorem 2.4.10]{JM} to be linear.
We strongly believe that the method of good-$\lambda$ inequalities developed in this work has many further applications and connections to noncommutative harmonic analysis and noncommutative potential theory.

\section{Preliminaries}
In this section, we briefly introduce the necessary background and notation needed for the study of noncommutative martingale inequalities. The reader interested in the detailed exposition of the subject is referred to the monographs \cite{KR1,KR2,T}. Throughout the paper, the symbol $\mathcal{M}$ denotes a von Neumann algebra and we equip this object with a semifinite normal faithful trace $\tau$. We treat $\mathcal{M}$ as a subalgebra of the larger algebra of all bounded operators acting on some given Hilbert space $H$. A closed, densely defined operator $a$ on $H$ is \emph{affiliated} with $\mathcal{M}$ if for all unitary operators $u$ belonging to the commutant $\mathcal{M}'$ of $\mathcal{M}$ we have the identity $u^*au=a$. Such an operator $a$ is said to be \emph{$\tau$-measurable} if for any $\e>0$ there exists a projection $e$ such that $e(H)\subset D(a)$ and
$\tau(I-e)<\e$. Here and below, we use the symbol $I$ to denote the identity operator. The class of all $\tau$-measurable operators will be denoted by $L^0(\mathcal{M},\tau)$. It can be shown that the trace $\tau$ extends to a positive tracial functional on the positive part $L^0_+(\mathcal{M},\tau)$ of $L^0(\mathcal{M},\tau)$ (with no risk of confusion, this extension is still denoted by $\tau$). 
If  $a$ is a self-adjoint $\tau$-measurable operator,  let $a=\int_{-\infty}^\infty \lambda de_\lambda$ stand for its spectral decomposition. For any Borel subset $B$ of $\R$, the spectral
projection of $a$ corresponding to the set $B$ is defined by $I_B(a)=\int_{-\infty}^\infty \chi_B(\lambda)de_\lambda$.

Let $e$, $f$ be two projections belonging to $\mathcal M$. Then $e\vee f$ (resp., $e\wedge f$) stands for the projection onto the sum $e(H)\cup f(H)$ (resp., onto the intersection $e(H)\cap f(H)$). The projections $e$ and $f$ are said to be equivalent if there exists a partial isometry $u\in\mathcal M$ such that $u^*u=e$ and $uu^*=f$. In this case, we denote $e\sim f.$

For $0< p<\infty$, we recall that the noncommutative $L^p$-space associated  with $(\mathcal M,\tau)$ is defined by $L^p(\mathcal{M},\tau)=\{x\in L^0(\mathcal{M},\tau):\tau(|x|^p)<\infty\}$
equipped with the (quasi-)norm
$ \|x\|_p=(\tau(|x|^p))^{1/p}$, where $|x|=(x^*x)^{1/2}$ is the modulus of $x$. For $p=\infty$, the space $L^p(\mathcal{M},\tau)$ coincides with $\mathcal{M}$ with its usual operator norm. We refer to the survey \cite{px2003} and the references therein for more details. At some places below we will need to work with two or more von Neumann algebras at the same time. For the convenience of the reader and to avoid confusion, in such a case we will indicate the algebra with respect to which the $L^p$-norm is calculated (writing $\|x\|_{L^p(\mathcal{M})}$ instead of $\|x\|_p$, etc.).

Let us present some basic facts from the theory of noncommutative martingales. Suppose that $(\mathcal{M}_n)_{n\geq 0}$ is a filtration, i.e., a nondecreasing sequence of von Neumann subalgebras of $\mathcal{M}$ whose union is weak$^*$-dense in $\mathcal{M}$. Then for any $n\geq 0$ there exists a normal conditional expectation $\mathcal{E}_n$ from $\mathcal{M}$ onto $\mathcal{M}_n$, which satisfies the requirements
\begin{itemize}
\item[(i)] $\mathcal{E}_n(axb)=a\mathcal{E}_n(x)b$ for all $a,\,b\in\mathcal{M}_n$ and $x\in \mathcal{M}$;
\item[(ii)] $\tau\circ \mathcal{E}_n=\tau$.
\end{itemize}
It is then easy to verify that we  have $\mathcal{E}_m\mathcal{E}_n=\mathcal{E}_n\mathcal{E}_m=\mathcal{E}_{\min(m,n)}$ for all nonnegative integers $m$ and $n$. Furthermore, since $\mathcal{E}_n$ preserves the trace, it can be extended to a contractive projection from $L^p(\mathcal{M},\tau)$ onto $L^p(\mathcal{M}_n,\tau_n)$ for all $1\leq p\leq \infty$ (where $\tau_n$ is the restriction of $\tau$ to $\mathcal{M}_n$). We will sometimes work with two different filtered von Neumann algebras $(\mathcal{M},(\mathcal{M}_n)_{n\geq 0},\tau)$, $(\mathcal{N},(\mathcal{N}_n)_{n\geq 0},\nu)$, and then, to avoid confusion, we will denote the associated sequences of conditional expectations by $(\mathcal{E}^\mathcal{M}_n)_{n\geq 0}$ and $(\mathcal{E}^\mathcal{N}_n)_{n\geq 0}$.

A sequence $x=(x_n)_{n\geq 0}$ in $L^1(\mathcal{M})+\mathcal M$ is called a \emph{noncommutative martingale} (with respect, or adapted to $(\mathcal{M}_n)_{n\geq 0}$), if for any $n\geq 0$ we have the equality
$$ \mathcal{E}_n(x_{n+1})=x_n.$$
The associated difference sequence is given by the formulae $dx_0=x_0$ and $dx_n=x_n-x_{n-1}$ for $n\geq 1$. Furthermore, we define the associated square function $S(x)$ and conditioned square function $s(x)$ by
$$ S(x)=\left(\sum_{n=0}^\infty |dx_n|^2\right)^{1/2}\qquad \mbox{and}\qquad s(x)=\left(\sum_{n=0}^\infty \mathcal{E}_{n-1}(|dx_n|^2)\right)^{1/2}.$$
Sometimes we will also use the truncated versions of these objects, given by
$$ S_N(x)=\left(\sum_{n=0}^N |dx_n|^2\right)^{1/2}\qquad \mbox{and}\qquad s_N(x)=\left(\sum_{n=0}^N \mathcal{E}_{n-1}(|dx_n|^2)\right)^{1/2}$$
for any nonnegative integer $N$.

In literature, the adjoint square functions $S(x^*)$ and $s(x^*)$ also play a significant role. However, we should emphasize here that essentially all the operators and martingales we will study below will be assumed to be self-adjoint; our methods enable the successful treatment of such operators only. Fortunately, in most cases this does not affect the generality of results, as more or less standard decomposition arguments typically allow the reduction of a given inequality under investigation to its special version for self-adjoint objects.

\section{Noncommutative good-$\lambda$ inequalities}
The purpose of this section is to present an abstract formulation of good-$\lambda$ inequalities in the noncommutative setting, which in Section \ref{applications} will be applied to obtain proofs of various important estimates. For the sake of clarity of the exhibition, we have decided to split the argumentation into several intermediate steps. Subsection \ref{prel} is a little informal and contains the explanation of the reasoning which has led us to the appropriate form of noncommutative good-$\lambda$ inequalities. The rigorous formulation and the study of good-$\lambda$ inequalities are presented in Subsections \ref{St4} and \ref{St5}, which are fundamental to the whole paper. In the last subsection, we provide the proof of general moment inequalities via the good-$\lambda$ inequalities.

\subsection{On the search of a suitable good-$\lambda$ inequality}\label{prel}
Our construction rests on a careful investigation of Burkholder-Gundy inequality: for a given parameter $p$ and some finite constant $C_p$ depending only on $p$,
\begin{equation}\label{BDG}
 \big\|S_N(x)\big\|_{L^p(\mathcal{M})}\leq C_p\big\|x_N\big\|_{L^p(\mathcal{M})},
\end{equation}
where $x=(x_n)_{n= 0}^N$ is an arbitrary finite martingale in $L^p(\mathcal{M},\tau)$ and $S_N(x)$ is its truncated square function. We will frequently switch from the classical to the noncommutative version of this estimate and back, which, hopefully, should not lead to any confusion.
One of the reasons why we decided to model our approach on this particular inequality is that the optimal orders of the constant $C_p$ as $p\to \infty$ are different in the classical and the noncommutative situations (see \cite{JX2}); furthermore, in the classical setting the estimate above is true in the range $1<p<\infty$, while for noncommutative martingales, it holds for $p\geq 2$ only (for $1<p<2$, one has to formulate the inequality in a different manner). Thus, it seems plausible to expect that the estimate \eqref{BDG} should indicate the necessary modifications of good-$\lambda$ inequalities which need to be implemented in the noncommutative context.

\medskip

\noindent\emph{Step 1.} To gain some intuition about our approach, let us start with the commutative case. As we have already seen in the introductory section, a classical method (see \cite{Bu0}) would rest on exploiting the estimate of the form
\begin{equation}\label{good1}
 \mathbb{P}\Big(S_N(x)\geq \beta \lambda,\,x_N^*\leq \delta \lambda\Big)\leq \alpha\mathbb{P}\big(S_N(x)\geq \lambda\big),
\end{equation}
where $x_N^*=\sup_{0\leq n\leq N}|x_n|$ is the maximal function of $x$.
Here $\lambda$ ranges from $0$ to infinity, while $\alpha$, $\beta$ and $\delta$ are appropriately chosen positive parameters. Such an estimate, if true, implies
$$ \mathbb{P}\big(S_N(x)\geq \beta \lambda\big)\leq \mathbb{P}\big(x_N^*\geq \delta \lambda\big)+\alpha\mathbb{P}\big(S_N(x)\geq \lambda\big).$$
Multiplying throughout by $\lambda^{p-1}$ and integrating over $\lambda$ from $0$ to $\infty$ yields an estimate equivalent to
\begin{equation}\label{integrating}
(\beta^{-p}-\alpha)\E \big(S_N(x)^p\big)\leq \delta^{-p}\E \big((x_N^*)^p\big).
\end{equation}
Hence, if the parameters $\alpha$, $\beta$ and $\delta$  satisfy $\beta^{-p}>\alpha$, then we get the bound
\begin{equation}\label{integrating1}
\big\|S_N(x)\big\|_{L^p}\leq \frac{\delta^{-1}}{(\beta^{-p}-\alpha)^{1/p}}\big\|x_N^*\big\|_{L^p},\qquad 1\leq p<\infty,
\end{equation}
which in turn gives the desired BG estimate $||S_N(x)||_{L^p}\lesssim_p ||x_N||_{L^p}$ for $1<p<\infty$, by virtue of Doob's maximal inequality; some further optimization over the parameters $\alpha$, $\beta$ and $\delta$ can be carried over, to ensure the optimal order of the constant: $O(p^{1/2})$ as $p\to \infty$.

 Our first observation is that the inequality \eqref{good1} is \emph{not} a good starting point in the noncommutative situation. The fundamental obstacle is that if any version of it held true, then, performing an analogous argument as above (which involves summation rather than integration, as we shall see later), we must obtain the bound
$$ \big\|S_N(x)\big\|_{L^p(\mathcal{M})}\leq c_p\big\|x_N\big\|_{L^p(\mathcal{M})}\;\; \mbox{for }1<p<\infty.$$
However, this estimate fails to hold for $1<p<2$ no matter what $c_p$ is (as we have already said, noncommutative Burkholder-Gundy inequalities are formulated differently in this range). This indicates that instead of \eqref{good1}, one should search for another classical good-$\lambda$-type inequality which is more suitable for noncommutative extensions. Motivated by the above calculation, we can impose the following (a little informal) requirement. Namely, such a good-$\lambda$-type estimate must contain in its formulation some sort of a threshold $p_0$ indicating that it yields $L^p$-estimates  in the range $p_0<p<\infty$ only. Then in the classical case such a threshold would have to be set to be $1$, while in the noncommutative situation one would be forced to take $p_0=2$.

\medskip

\noindent\emph{Step 2.} Such an alternative classical good-$\lambda$-type inequality is also contained in \cite{Bu0}. In Lemma 3.1 there, Burkholder established (a slight extension of) the following bound:
\begin{equation}\label{good2.0}
 \lambda\mathbb{P}\Big(S_N(x)\geq \beta\lambda\Big)\leq 3\theta^{-1}\E \Big(|x_N| 1_{\{S_N(x)\geq \lambda\}}\Big),
\end{equation}
where $\lambda$ and $\theta$ are arbitrary positive numbers and $\beta=(1+2\theta^2)^{1/2}$. Multiplying both sides by $\lambda^{p-2}$ and integrating over $\lambda$ from $0$ to infinity, one gets
$$ \E\Big( S_N(x)^p\Big)\leq \frac{3\beta^p}{\theta}\frac{p}{p-1}\E \Big(|x_N|S_N(x)^{p-1}\Big),\qquad 1<p<\infty,$$
which, by the H\"older inequality, implies
$$ \big\|S_N(x)\big\|_{L^p}\leq \frac{3\beta^p}{\theta}\frac{p}{p-1}\big\|x_N\big\|_{L^p},\qquad 1<p<\infty.$$
Setting $\theta=p^{-1/2}$, we see that $\beta^p=(1+2/p)^{p/2}<e<3$, which gives the Burkholder-Gundy estimate with the constant of optimal order $O(p^{1/2})$ as $p\to \infty$. Obviously, \eqref{good2.0} has the same deficiency as previously: any noncommutative version of it would yield a false inequality for $1<p<2$. However, now it is clear how to modify the estimate: the threshold $p_0=1$ will increase to $2$ if we square the appropriate terms on the left and on the right:
\begin{equation}\label{good2}
 \lambda^2\mathbb{P}\Big(S_N(x)\geq \beta\lambda\Big)\leq 3\theta^{-1}\E\Big( x_N^2 1_{\left\{S_N(x)\geq \lambda\right\}}\Big),\qquad \lambda>0,
\end{equation}
for some positive parameters $\beta$, $\theta$ to be specified.
Indeed, the repetition of the above  argument now yields Burkholder-Gundy inequality \eqref{BDG} in the range $2<p<\infty$ only. Thus, it seems promising to consider \eqref{good2} as the right starting point for the noncommutative good-$\lambda$ inequality.

\medskip

\noindent\emph{Step 3.} As we have already seen above, the size of the constants $C_p$ in \eqref{BDG} depend only on the values of the parameters involved in the good-$\lambda$ inequality.  Our next step is to search for a proof of the classical estimate \eqref{good2} which would be easily transferable to the noncommutative realm: this will give us some additional hints on the shape of noncommutative good-$\lambda$ estimates. It turns out that such a proof naturally splits into two parts: first one establishes a slightly stronger version of \eqref{good2} and then deduces the desired bound by Chebyshev's inequality.

\smallskip

\noindent\emph{$\bullet$ An auxiliary bound}. Consider the stopping time
$$ \mu=\inf\Big\{n: S_n(x)\geq 1\Big\},$$
with the standard convention that $\inf\emptyset=\infty$. Then for any $n$, we have $S_{n-1}(x)<1$ on the set $\{\mu=n\}$ (we set $S_{-1}(x)=0$) and hence
\begin{equation}\label{Step1}
\E \Big(\left(S_n^2(x)-1\right)1_{\{\mu=n\}}\Big)\leq \E \Big(\left(S_{n}^2(x)-S_{n-1}^2(x)\right)1_{\{\mu=n\}}\Big)=\E \Big(dx_n^21_{\{\mu=n\}}\Big).
\end{equation}
Furthermore, for any $N>n$, using the fact that $x$ is a martingale, we have
\begin{equation}
\begin{split}
\E \Big(\left(S_{N}^2(x)-S_{n}^2(x)\right)1_{\{\mu=n\}}\Big)&\leq \sum_{k=n+1}^N \E \Big(dx_k^21_{\{\mu=n\}}\Big)\\
&=\E\Big((x_N-x_{n})^21_{\{\mu=n\}}\Big)\\
&=\E\Big(x_N^21_{\{\mu=n\}}\Big)-\E \Big(x_{n}^21_{\{\mu=n\}}\Big)\\
&\leq \E\Big(x_N^21_{\{\mu=n\}}\Big).
\end{split}
\end{equation}
Adding the above two simple observations, we get the estimate
$$ \E \Big(\big(S_N^2(x)-1\big)1_{\{\mu=n\}}\Big)\leq \E \Big(\big(x_N^2+dx_n^2\big)1_{\{\mu=n\}}\Big)\leq \E \Big(\big(x_N^2+(dx_N^*)^2\big)1_{\{\mu=n\}}\Big),$$
where $dx_N^*=\sup_{0\leq n\leq N}|dx_n|$.
Hence, summing over $n$, we finally obtain
\begin{equation}\label{good3}
 \E \Big(\big(S_N^2(x)-1\big)1_{\{\mu\leq N\}}\Big)\leq \E \Big(\big(x_N^2+(dx_N^*)^2\big)1_{\{\mu\leq N\}}\Big).
\end{equation}
%
%
This is precisely the auxiliary estimate. We turn to the second part of the proof.
\smallskip

\noindent\emph{$\bullet$ An application of Chebyshev's inequality.} Obviously, the random variable $\big(S_N^2(x)-1\big)1_{\{\mu\leq N\}}$ is positive (the reason for which we formulate this trivial observation is that the noncommutative counterpart of this statement will not be true in general). Consequently, Chebyshev's inequality yields, for any $\beta>1$,
\begin{equation}\label{good4}
\begin{split}
 \mathbb{P}\Big(S_N(x)\geq \beta\Big)&=\mathbb{P}\Big(S_N^2(x)-1\geq \beta^2-1\Big)\\
&\leq \frac{1}{\beta^2-1}\E\Big(\big(x_N^2+(dx_N^*)^2\big)1_{\{\mu\leq N\}}\Big)\\
&\leq \frac{1}{\beta^2-1}\E \Big(\big(x_N^2+(dx_N^*)^2\big)1_{\{S_N(x)\geq 1\}}\Big).
\end{split}
\end{equation}
This is a form of the estimate \eqref{good2} we would like to transfer to the noncommutative case: it could then be regarded as a noncommutative good-$\lambda$ bound corresponding to the Burkholder-Gundy inequalities.

\medskip
However, before we do this, let us check what constant we obtain with the use of this inequality.
Applying the bound to the martingale $x/\lambda$, multiplying both sides by $\lambda^{p-1}$ and integrating over $\lambda$ gives
$$ \E \Big(S_N^p(x)\Big)\leq \frac{p}{p-2}\frac{\beta^p}{\beta^2-1}\E \Big(S_N^{p-2}(x)\big(x_N^2+(dx_N^*)^2\big)\Big)$$
and hence
$$ \big\|S_N(x)\big\|_{L^p}^2\leq \frac{p}{p-2}\frac{\beta^p}{\beta^2-1}\big\|x_N^2+(dx_N^*)^2\big\|_{L^{p/2}}.$$
Since $dx_N^*\leq 2x_N^*$, triangle inequality and Doob's maximal estimate finally yield
$$\big\|S_N(x)\big\|_{L^p}\leq \left\{\frac{p}{p-2}\frac{\beta^p}{\beta^2-1}\left(1+4\left(\frac{p}{p-1}\right)^2\right)\right\}^{1/2}\big\|x_N\big\|_{L^p}.$$
Setting $\beta=1+1/p$, we obtain the constant of order $O(p^{1/2})$ as $p\to \infty$. This is bad news: it is well-known (see \cite{JX2}) that in the noncommutative setting the optimal order is $O(p)$. This proves that still some modification of \eqref{good3} (and hence also \eqref{good4}) is needed. An indication in the right direction is already contained in the above discussion. In the noncommutative situation there will be no reason for the (appropriate version of the) term $\big(S_N^2(x)-1\big)1_{\{\mu\leq N\}}$ to be positive. A little thought and experimentation suggests considering the following variant of \eqref{good3}:
\begin{equation}\label{good5}
 \E \Big(\big(S_N(x)-1\big)^21_{\{\mu\leq N\}}\Big) \leq \E \Big(\big(x_N^2+(dx_N^*)^2\big)1_{\{\mu\leq N\}}\Big).
\end{equation}
As we have already noted, in the classical case we have $S_N(x)\geq 1$ on $\{\mu\leq N\}$, and hence this new bound is weaker than \eqref{good3}. Applying Chebyshev's inequality gives, for any $\beta>1$,
\begin{equation}\label{good6}
\mathbb{P}\Big(S_N(x)\geq \beta\Big)\leq \frac{1}{(\beta-1)^2}\E \Big(\big(x_N^2+(dx_N^*)^2\big)1_{\{S_N(x)\geq 1\}}\Big).
\end{equation}
Repeating the above calculations shows that \eqref{good6} implies Burkholder-Gundy inequality
$$ \big\|S_N(x)\big\|_{L^p}\leq \left\{\frac{p}{p-2}\frac{\beta^p}{(\beta-1)^2}\left(1+4\left(\frac{p}{p-1}\right)^2\right)\right\}^{1/2}\big\|x_N\big\|_{L^p},$$
for which the optimal choice $\beta=p/(p-2)$ returns the constant of order $O(p)$. This indicates that \eqref{good5} and \eqref{good6} should indeed be the right noncommutative versions of good-$\lambda$ inequalities.

\subsection{Noncommutative version of \eqref{good5}}\label{St4}
Now we leave the context of Burkholder-Gundy inequality and, motivated by the above considerations, formulate the appropriate general version of the inequality \eqref{good5} which will be applicable in the study of various bounds of the form
$$ ||y||_{L^p(\mathcal{M})}\leq c_p||x||_{L^p(\mathcal{M})}.$$
Until the end of this section, we assume that $N$ is a fixed nonnegative integer, $y=(y_n)_{n= 0}^N$ is a finite, self-adjoint martingale (with respect to some filtration), while $x_N$ and $z_N$ are self-adjoint operators. 
Consider the sequence $R=(R_n)_{n\geq -1}$ of projections associated with $y$, given by $R_{-1}=I$ and, inductively,
$$ R_n=R_{n-1}I_{(-\infty,1)}(R_{n-1}y_nR_{n-1}),\qquad n=0,\,1,\,2,\,\ldots,\, N.$$
Note that in the classical case the projection $I-R_N$ corresponds to the indicator function of the set $\{\max_{0\leq m\leq N}y_m\geq 1\}$ and thus it is closely related to the term $1_{\{\mu\leq N\}}=1_{\{S_N\geq 1\}}$ in \eqref{good5}. Some elementary properties of $R=(R_n)_{n\geq -1}$ are enumerated below (see \cite{Cu}).
\begin{lemma}\label{prop}
The following statements  hold true:
\begin{enumerate}[{\rm (i)}]

\item for each $n\geq 0$, the projections $R_n$ belongs to $\mathcal{M}_n$;

\item for each $n\geq 0$, the projection $R_n$ commute with $R_{n-1}y_nR_{n-1}$;

\item for each $n\geq 0$, we have $ R_ny_nR_n\leq R_n.$
\end{enumerate}
\end{lemma}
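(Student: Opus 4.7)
The plan is to establish all three assertions simultaneously by induction on $n$, with the base case $R_{-1}=I$ being trivial for each. The key technical ingredient throughout will be the elementary but crucial observation that $R_{n-1}$ automatically commutes with $R_{n-1}y_nR_{n-1}$ (sandwiching by a projection on the left and right, then multiplying again by that projection, leaves the expression unchanged), which unlocks the functional calculus for the self-adjoint operator $a_n := R_{n-1}y_nR_{n-1}$.

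For (i), assume inductively that $R_{n-1}\in\mathcal{M}_{n-1}\subseteq\mathcal{M}_n$. Since $y_n\in\mathcal{M}_n$ by adaptedness, the operator $a_n$ is self-adjoint and lies in $\mathcal{M}_n$; therefore all of its spectral projections belong to $\mathcal{M}_n$, and so does the product $R_n=R_{n-1}I_{(-\infty,1)}(a_n)$. For (ii), set $e_n := I_{(-\infty,1)}(a_n)$. By the observation above $R_{n-1}$ commutes with $a_n$, and since spectral projections of a self-adjoint operator commute with every operator that commutes with it (bicommutant/functional calculus), $R_{n-1}$ commutes with $e_n$. Consequently $R_n=R_{n-1}e_n=e_ne_{n-1}\cdot R_{n-1}$ is a product of two commuting projections, hence a projection, and it commutes with $a_n$ because both factors do.

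For (iii), the commutation just proved allows us to rewrite
$$R_n = R_{n-1}e_nR_{n-1},$$
so that
$$R_ny_nR_n = R_{n-1}e_n\big(R_{n-1}y_nR_{n-1}\big)e_nR_{n-1} = R_{n-1}(e_na_ne_n)R_{n-1}.$$
Applying the spectral theorem to $a_n$ together with the elementary pointwise inequality $\lambda\chi_{(-\infty,1)}(\lambda)\leq \chi_{(-\infty,1)}(\lambda)$ yields $e_na_ne_n\leq e_n$, and sandwiching this operator inequality by $R_{n-1}$ preserves the order, giving
$$R_ny_nR_n \leq R_{n-1}e_nR_{n-1} = R_n.$$

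I do not anticipate a genuine obstacle here, since everything reduces to the functional calculus for a single self-adjoint operator. The only step requiring a modicum of care is the spectral inequality $e_na_ne_n\leq e_n$, whose verification uses that the cutoff is precisely at $1$; this is what pins down the particular choice of spectral interval $(-\infty,1)$ in the definition of $R_n$.
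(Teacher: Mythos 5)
Your proof is correct; the paper itself gives no argument for this lemma (it simply cites Cuculescu's original paper, where these properties are established by exactly the functional-calculus reasoning you use: $R_{n-1}$ commutes with $a_n=R_{n-1}y_nR_{n-1}$ because $R_{n-1}a_n=a_nR_{n-1}=a_n$, hence with its spectral projections, and the inequality in (iii) follows from $\lambda\chi_{(-\infty,1)}(\lambda)\le\chi_{(-\infty,1)}(\lambda)$ after conjugating by $R_{n-1}$). The only blemish is the typo $R_n=e_ne_{n-1}\cdot R_{n-1}$, which should read $R_n=e_n\cdot R_{n-1}$.
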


The following assumption will play a key role in this paper. It concerns the structure of the operators which enable the effective functioning of the good-$\lambda$ approach.

\begin{definition}\label{test condition}
Let $x_N$, $y$, $z_N$ and $(R_n)_{n\geq -1}$ be as above. The triple $\big(x_N, y, z_N\big)$ is said to satisfy the \emph{good-$\lambda$ testing conditions} if we have

\begin{enumerate}[{\rm (i)}]
\item $\sum_{n=0}^{N} \sum_{k=n+1}^N \tau\big((R_{n-1}-R_{n})dy_kR_{n-1}dy_k(R_{n-1}-R_{n}) \big)\leq  \tau\big((I-R_N)x_N^2\big);$
\item for each $0\leq k\leq N$ and any projection $P\in \mathcal{M}_k$,
$\tau\left(Pdy_k^2P\right)\leq \tau\left(Pz_N^2P\right).$
\end{enumerate}
\end{definition}
Though these assumptions  might look complicated and artificial, we will see in later sections that they are satisfied in all the relevant settings. For instance, if $x=(x_n)_{n=0}^N$ and $y=(y_n)_{n=0}^N$ are martingales such that $dy_k^2\leq dx_k^2$ for all $0\leq k\leq N$, then (i) holds; if $z_N^2$ is a majorant of the sequence $dy^2$ (i.e., we have $z_N^2\geq dy_k^2$ for all $0\leq k\leq N$), then (ii) is valid. However, there are other settings in which both the conditions are satisfied.

{There is a set of slightly stronger requirements which has the advantage of being much more concise.
\begin{definition}\label{strong test condition def}
Let $x_N$, $y$, $z_N$ be as above. The triple $\big(x_N, y, z_N\big)$ is said to satisfy the \emph{strong good-$\lambda$ testing conditions} if we have, for every $k\geq0,$
\begin{equation}\label{strong test condition}
\sum_{m=k+1}^N\mathcal{E}_k(dy_m^2)\leq \mathcal{E}_k(x_N^2)\quad {\rm and}\quad dy_k^2\leq \mathcal{E}_k(z_N^2).
\end{equation}
\end{definition}
It is obvious that the two inequalities in \eqref{strong test condition} imply the conditions (i) and (ii) in Definition \ref{test condition}. Moreover, the strong good-$\lambda$ testing conditions are easier to be checked in practice, as they refer solely to $x$, $y$, $z$ and do not involve the sequence $(R_n)_{n\geq -1}$.} However, in all our applications of the good-$\lambda$ approach below, we have decided to verify the original good-$\lambda$ testing conditions because of its slight generality.

We will establish the following inequality. It is evident, at least optically, that this estimate can be regarded as a noncommutative analogue of \eqref{good5}: see the above interpretation of $I-R_N$.

\begin{theorem}\label{thm0}
Suppose that the triple $\big(x_N, y, z_N\big)$ satisfies the good-$\lambda$ testing conditions.
Then we have
\begin{equation}\label{good-la}
\tau\left((I-R_N)\left(y_N-I\right)^2\right)\leq 2\tau\Big((I-R_N)\left(x_N^2+z_N^2\right)\Big).
\end{equation}
\end{theorem}
\begin{proof}
We will first prove that
\begin{equation}\label{thmgood-la-eq1}\tau\Big((I-R_N)\left(y_N-I\right)^2\Big)\leq 2\sum_{n=0}^N \tau\Big((R_{n-1}-R_n)(y_N-I)R_{n-1}(y_N-I)\Big).
\end{equation}
To this end, note that
\begin{align*}
\sum_{n=0}^N \tau\Big((R_{n-1}-R_n)y_NR_{n-1}y_N\Big)&=\sum_{n=0}^N\sum_{k=n}^N \tau\Big((R_{n-1}-R_n)y_N(R_{k-1}-R_k)y_N\Big)\\
&\quad +\sum_{n=0}^N \tau\Big((R_{n-1}-R_n)y_NR_Ny_N\Big)\\
&\geq \sum_{k=0}^N\sum_{n=0}^k \tau\Big((R_{n-1}-R_n)y_N(R_{k-1}-R_k)y_N\Big)\\
&=\sum_{k=0}^N \tau\Big((I-R_k)y_N(R_{k-1}-R_k)y_N\Big),
\end{align*}
which immediately yields
\begin{equation*}
\begin{split}
2&\sum_{n=0}^N \tau\Big((R_{n-1}-R_n)y_NR_{n-1}y_N\Big)\\
&\geq \sum_{n=0}^N \tau\Big((R_{n-1}-R_n)y_NR_{n-1}y_N\Big) +\sum_{k=0}^N \tau\Big((I-R_k)y_N(R_{k-1}-R_k)y_N\Big)\\
&=\sum_{n=0}^N \tau\Big((I-R_n+R_{n-1})y_N(R_{n-1}-R_n)y_N\Big)\\
&=\tau\Big((I-R_N)y_N^2\Big)+\sum_{n=0}^N \tau\Big((R_{n-1}-R_n)y_N(R_{n-1}-R_n)y_N\Big).
\end{split}
\end{equation*}
Therefore
\begin{align*}
\tau&\Big((I-R_N)\left(y_N-I\right)^2\Big) +\sum_{n=0}^N \tau\Big((R_{n-1}-R_n)(y_N-I)(R_{n-1}-R_n)(y_N-I)\Big)\\
&= \tau\left((I-R_N)y_N^2\right) +\sum_{n=0}^N \tau\Big((R_{n-1}-R_n)y_N(R_{n-1}-R_n)y_N\Big)\\
&\qquad -4\tau\Big((I-R_N)y_N\Big)+2\tau(I-R_N)\\
&\leq 2\sum_{n=0}^N \tau\Big((R_{n-1}-R_n)y_NR_{n-1}y_N\Big)-4\tau\Big((I-R_N)y_N\Big)+2\tau(I-R_N)\\
&=2\sum_{n=0}^N \tau\Big((R_{n-1}-R_n)(y_N-I)R_{n-1}(y_N-I)\Big),
\end{align*}
which implies \eqref{thmgood-la-eq1}. Using the fact that $y$ is a martingale, the right hand side of \eqref{thmgood-la-eq1} can be written as
\begin{align*}
 2\sum_{n=0}^N\tau&\Big((R_{n-1}-R_n)(y_N-I)R_{n-1}(y_N-I)\Big)\\
& = 2\sum_{n=0}^N\tau\Big((R_{n-1}-R_n)(y_n-I)R_{n-1}(y_n-I)(R_{n-1}-R_n)\Big)\\
&\quad +2 \sum_{n=0}^N\sum_{k=n+1}^N\tau\Big((R_{n-1}-R_n)dy_kR_{n-1}dy_k(R_{n-1}-R_n)\Big).
\end{align*}
Exploiting the first assumption of the good-$\lambda$ testing conditions, we obtain
$$\sum_{n=0}^N\sum_{k=n+1}^N\tau\Big((R_{n-1}-R_n)dy_kR_{n-1}dy_k(R_{n-1}-R_n)\Big)\leq \tau\Big((I-R_N)x_N^2\Big).$$
According to the second assumption of the good-$\lambda$ testing conditions, the proof of the theorem will be complete if the following can be verified: for any $0\leq n\leq N$,
\begin{equation}\label{innterm}
\begin{split}
 &\tau\Big((R_{n-1}-R_n)dy_n^2\Big)\geq \tau\Big((R_{n-1}-R_n)(y_n-I)R_{n-1}(y_n-I)\Big).
\end{split}
\end{equation}
Indeed, observe that
\begin{align*}
\tau\Big((R_{n-1}-R_n)dy_n^2\Big)&\geq \tau\Big((R_{n-1}-R_n)dy_n(R_{n-1}-R_n)dy_n\Big)\\
&=\tau\Big((R_{n-1}-R_n)(y_n-y_{n-1})(R_{n-1}-R_n)(y_n-y_{n-1})\Big)\\
&\geq \tau\Big((R_{n-1}-R_n)(y_n-I)(R_{n-1}-R_n)(y_n-I)\Big).
\end{align*}
To see that the last passage is valid, we transform it into the equivalent estimate
\begin{equation}\label{Equiva}
\tau\Big((R_{n-1}-R_n)(I-y_{n-1})(R_{n-1}-R_n)(2y_n-y_{n-1}-I)(R_{n-1}-R_n)\Big)\geq 0.
\end{equation}
According to the definition of $R$, we know that
$$(R_{n-1}-R_n)(I-y_{n-1})(R_{n-1}-R_n)\geq 0$$
 and
\begin{align*}
&(R_{n-1}-R_n)(2y_n-y_{n-1}-I)(R_{n-1}-R_n)\\
&=2(R_{n-1}-R_n)(y_n-I)(R_{n-1}-R_n)+(R_{n-1}-R_n)(I-y_{n-1})(R_{n-1}-R_n)\geq 0.
\end{align*}
These imply that \eqref{Equiva} holds.
Observe that by the commuting property of $R$ (Lemma \ref{prop} (ii)), we have
$$ \tau\Big((R_{n-1}-R_n)(y_n-I)(R_{n-1}-R_n)(y_n-I)\Big)=\tau\Big((R_{n-1}-R_n)(y_n-I)R_{n-1}(y_n-I)\Big)$$
and hence \eqref{innterm} follows.
\end{proof}

\subsection{Noncommutative version of \eqref{good6}}\label{St5}
As in the case of \eqref{good-la}, we continue with the general setup of an arbitrary martingale $y=(y_n)_{n=0}^N$ and operators $x_N$, $z_N$ satisfying the domination principles (i) and (ii) in Definition 3.2. We need additional projections, which will capture the behavior of the tails $\left\{S(x)\geq \beta\right\}$ in \eqref{good6}. For a fixed number $\beta>1$, consider the family $(Q_n)_{n= 0}^N$ given by $Q_{-1}=I$ and, inductively,
\begin{equation}\label{defQ}
Q_{n}=Q_{n-1}I_{(-\infty,\beta)}(Q_{n-1}y_nQ_{n-1}),\qquad n=0,\,1,\,2,\,\ldots,\,N.
\end{equation}

The version of \eqref{good6}  can now be stated as follows.

\begin{theorem}\label{thm2}
Suppose that the triple $\big(x_N, y, z_N\big)$ satisfies the good-$\lambda$ testing conditions. Then we have
\begin{equation}\label{good-lambda}
\tau(I-Q_N)\leq 4(\beta-1)^{-2}\tau\Big((I-R_N)\big(x_N^2+z_N^2\big)\Big).
\end{equation}
\end{theorem}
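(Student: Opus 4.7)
My plan is to emulate the classical Chebyshev deduction of \eqref{good6} from \eqref{good5}, with Theorem~\ref{thm0} playing the role of the noncommutative auxiliary bound. The argument proceeds in three stages.

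First, I would establish a Cuculescu-type lower bound for the $Q$-projections. Setting $p_n=Q_{n-1}-Q_n$, the definition \eqref{defQ} identifies $p_n$ as the spectral projection of $Q_{n-1}y_nQ_{n-1}$ onto $[\beta,\infty)$; since $p_nQ_{n-1}=p_n$ this gives $p_ny_np_n\geq\beta p_n$, hence $p_n(y_n-I)p_n\geq(\beta-1)p_n$. The elementary identity $pA^2p-(pAp)^2=pA(I-p)Ap\geq 0$, valid for any self-adjoint $A$ and projection $p$, then yields $p_n(y_n-I)^2p_n\geq(p_n(y_n-I)p_n)^2\geq(\beta-1)^2p_n$, so $(\beta-1)^2\tau(p_n)\leq\tau(p_n(y_n-I)^2)$.

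Second, I would pass from $y_n$ to $y_N$ on the right. Since $y_n=\mathcal{E}_n(y_N)$ and the conditional expectation satisfies Kadison's inequality $(\mathcal{E}_n(X))^2\leq\mathcal{E}_n(X^2)$ for the self-adjoint operator $X=y_N-I$, while $p_n\in\mathcal{M}_n$ and $\tau\circ\mathcal{E}_n=\tau$, one obtains $\tau(p_n(y_n-I)^2)\leq\tau(p_n(y_N-I)^2)$. Summing over $n$ produces the intermediate Chebyshev-type bound
\[
(\beta-1)^2\,\tau(I-Q_N)\leq\tau\big((I-Q_N)(y_N-I)^2\big).
\]

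Third---and this is the main obstacle---I would bridge from $I-Q_N$ to $I-R_N$ on the right-hand side. The classical inclusion $\{S_N\geq\beta\}\subseteq\{S_N\geq 1\}$ has \emph{no} direct operator analogue: one can exhibit finite-dimensional martingales $y$ for which $I-Q_N\not\leq I-R_N$, reflecting the fact that the Cuculescu recursion is not monotone in the threshold (the compressions $R_{n-1}y_nR_{n-1}$ and $Q_{n-1}y_nQ_{n-1}$ are applied on different subspaces). My plan is therefore to establish the trace-level inequality $\tau((I-Q_N)(y_N-I)^2)\leq 2\,\tau((I-R_N)(y_N-I)^2)$, via the splitting $I=R_N+(I-R_N)$ inserted in the middle of $(y_N-I)^2$, combined with Lemma~\ref{prop}(iii) (which gives $R_N(y_N-I)R_N\leq 0$) and a Cauchy--Schwarz argument at the trace level to absorb the $R_N$-part into the $(I-R_N)$-part. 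Combining these three ingredients with Theorem~\ref{thm0} yields
\[
(\beta-1)^2\tau(I-Q_N)\leq 2\,\tau\big((I-R_N)(y_N-I)^2\big)\leq 4\,\tau\big((I-R_N)(x_N^2+z_N^2)\big),
\]
which is the claim \eqref{good-lambda}; the constant $4$ appears as the product of the bridging loss of $2$ in the third step and the factor $2$ supplied by Theorem~\ref{thm0}.
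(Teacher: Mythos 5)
Your first two steps are correct and are in fact cleaner than the corresponding part of the paper's argument: the Cuculescu-type bound $p_n(y_n-I)p_n\ge(\beta-1)p_n$, the operator inequality $p_n(y_n-I)^2p_n\ge(p_n(y_n-I)p_n)^2$, and the Kadison/trace-preservation step passing from $y_n$ to $y_N$ all go through, yielding $(\beta-1)^2\tau(I-Q_N)\le\tau\big((I-Q_N)(y_N-I)^2\big)$. The problem is that this intermediate bound carries $I-Q_N$ on the right, whereas Theorem~\ref{thm0} controls $\tau\big((I-R_N)(y_N-I)^2\big)$; your third step, the ``bridge'' $\tau\big((I-Q_N)(y_N-I)^2\big)\le 2\,\tau\big((I-R_N)(y_N-I)^2\big)$, is exactly where the whole difficulty of the theorem sits, and you have not proved it. The mechanism you sketch cannot work as stated: inserting $I=R_N+(I-R_N)$ gives
\[
\tau\big((I-Q_N)(y_N-I)^2\big)=\tau\big((I-Q_N)(y_N-I)R_N(y_N-I)\big)+\tau\big((I-Q_N)(y_N-I)(I-R_N)(y_N-I)\big),
\]
and while the second term is dominated by $\tau\big((I-R_N)(y_N-I)^2\big)$ since $I-Q_N\le I$, the first term is bounded by Cauchy--Schwarz only by $\tau\big(R_N(y_N-I)^2R_N\big)$ -- the compression to $R_N$, i.e.\ the \emph{complement} of what you need. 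The sign information $R_N(y_N-I)R_N\le 0$ from Lemma~\ref{prop}(iii) is of no help here, because $\tau\big(R_N(y_N-I)^2R_N\big)$ contains the summand $\tau\big((R_N(y_N-I)R_N)^2\big)$, the trace of the square of a negative operator, which cannot be controlled by any quantity living on $I-R_N$. So the $R_N$-part does not ``absorb''; whether the bridging inequality is even true is unclear, and in any case it would require an argument of at least the depth of the one it is meant to replace.

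For comparison, the paper never attempts this exchange at the end. Instead it injects $R_n$ into the analysis of each individual difference $Q_{n-1}-Q_n$ at the very first step: writing $y_n=d_n+(I-R_n)y_nR_n+(y_n-I)(I-R_n)$ with $d_n=R_ny_nR_n+I-R_n\le I$, it shifts the spectral threshold from $\beta$ to $\beta-1$ and applies Chebyshev only to the remainder, which already carries a factor $I-R_n$; a subsequent induction (using $(R_k-R_{k+1})y_{k+1}R_{k+1}=0$) pushes $R_n$ up to $R_N$ before summing over $n$. That is the device by which $I-R_N$, rather than $I-Q_N$, ends up on the right-hand side. To complete your proof you would need either a genuine proof of the bridging inequality or a restructuring of Step~1 along the paper's lines.
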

\begin{proof}
As we have already seen above, in the classical case the assertion follows at once from Chebyshev's inequality. In the noncommutative setting, however, there are several technical issues which make the reasoning quite lengthy. We have decided to split the proof into a few intermediate parts.

\medskip

\noindent\emph{Step 1.} Fix $n\in \{0,\,1,\,2,\,\ldots,\,N\}$ and observe that by the very definition of $Q_n$,
\begin{align*}
&\tau(Q_{n-1}-Q_n)\\
&=\tau\Big(I_{[\beta,\infty)}\big((Q_{n-1}-Q_n)y_n(Q_{n-1}-Q_n)\big)\Big)\\
&=\tau\bigg(I_{[\beta,\infty)}\Big((Q_{n-1}-Q_n)\big(R_ny_nR_n+(I-R_n)y_nR_n+y_n(I-R_n)\big)(Q_{n-1}-Q_n)\Big)\bigg).
\end{align*}
By the properties of the projection $R_n$, the operator $d_n:=R_ny_nR_n+I-R_n$ is not bigger than $I$. Consequently,
\begin{align*}
&\tau(Q_{n-1}-Q_n)\\
&=\tau\bigg(I_{[\beta,\infty)}\Big((Q_{n-1}-Q_n)\big(d_n+(I-R_n)y_nR_n+(y_n-I)(I-R_n)\big)(Q_{n-1}-Q_n)\Big)\bigg)\\
&\leq \tau\bigg(I_{[\beta-1,\infty)}\Big((Q_{n-1}-Q_n)\big((I-R_n)y_nR_n+(y_n-I)(I-R_n)\big)(Q_{n-1}-Q_n)\Big)\bigg).
\end{align*}
Therefore, by Chebyshev's inequality,
\begin{equation}\label{Cheb}
\begin{split}
(&\beta-1)^2\tau(Q_{n-1}-Q_n)\\
& \leq \tau\bigg(\Big((Q_{n-1}-Q_n)\big((I-R_n)y_nR_n+(y_n-I)(I-R_n)\big)(Q_{n-1}-Q_n)\Big)^2\bigg)\\
&\leq \tau\bigg((Q_{n-1}-Q_n)\Big((I-R_n)y_nR_n+(y_n-I)(I-R_n)\Big)^2(Q_{n-1}-Q_n)\bigg)\\
&=\tau\Big((Q_{n-1}-Q_n)(y_n-I)(I-R_n)(y_n-I)(Q_{n-1}-Q_n)\Big)\\
&\quad +\tau\Big((Q_{n-1}-Q_n)(I-R_n)y_nR_ny_n(I-R_n)(Q_{n-1}-Q_n)\Big)\\
&:=I_1+I_2.
\end{split}
\end{equation}
We will analyze the terms $I_1$ and $I_2$ separately below.

\medskip

\noindent\emph{Step 2.} The analysis of the term $I_1$ is simple. Observe that  by the martingale property of $y$,
\begin{align*}
 \tau\Big(&(Q_{n-1}-Q_n)(y_N-I)(I-R_n)(y_N-I)(Q_{n-1}-Q_n)\Big)\\
&=\tau\Big((Q_{n-1}-Q_n)(y_n-I)(I-R_n)(y_n-I)(Q_{n-1}-Q_n)\Big)\\
&\quad +\sum_{k=n+1}^N \tau\Big((Q_{n-1}-Q_n)dy_k(I-R_n)dy_k(Q_{n-1}-Q_n)\Big)\\
&\geq \tau\Big((Q_{n-1}-Q_n)(y_n-I)(I-R_n)(y_n-I)(Q_{n-1}-Q_n)\Big).
\end{align*}
This implies
\begin{equation}\label{boundI1}
\begin{split}
I_1&\leq \tau\Big((Q_{n-1}-Q_n)(y_N-I)(I-R_n)(y_N-I)(Q_{n-1}-Q_n)\Big)\\
&\leq \tau\Big((Q_{n-1}-Q_n)(y_N-I)(I-R_N)(y_N-I)(Q_{n-1}-Q_n)\Big).
\end{split}
\end{equation}
The analysis of $I_2$ is more complicated. By the martingale property of $y$, we have, for any $k\geq n$,
\begin{align*}
&\tau\Big((Q_{n-1}-Q_n)(I-R_k)y_kR_ky_k(I-R_k)(Q_{n-1}-Q_n)\Big)\\
&\leq\tau\Big((Q_{n-1}-Q_n)(I-R_k)y_kR_ky_k(I-R_k)(Q_{n-1}-Q_n)\Big)\\
&\quad +\tau\Big((Q_{n-1}-Q_n)(I-R_k)dy_{k+1}R_kdy_{k+1}(I-R_k)(Q_{n-1}-Q_n)\Big)\\
&= \tau\Big((Q_{n-1}-Q_n)(I-R_k)y_{k+1}R_ky_{k+1}(I-R_k)(Q_{n-1}-Q_n)\Big)\\
&=\tau\Big((Q_{n-1}-Q_n)(I-R_k)y_{k+1}(R_k-R_{k+1})y_{k+1}(I-R_k)(Q_{n-1}-Q_n)\Big)\\
&\quad +\tau\Big((Q_{n-1}-Q_n)(I-R_k)y_{k+1}R_{k+1}y_{k+1}(I-R_k)(Q_{n-1}-Q_n)\Big).
\end{align*}
But we have $(R_k-R_{k+1})y_{k+1}R_{k+1}=R_{k+1}y_{k+1}(R_k-R_{k+1})=0$, by the commuting property of $R$ (see Lemma \ref{prop} (ii)). Plugging this above, we see that
\begin{equation*}\label{innnterm}
\begin{split}
&\tau\Big((Q_{n-1}-Q_n)(I-R_k)y_kR_ky_k(I-R_k)(Q_{n-1}-Q_n)\Big)\\
&\leq \tau\Big((Q_{n-1}-Q_n)(I-R_k)y_{k+1}(R_k-R_{k+1})y_{k+1}(I-R_k)(Q_{n-1}-Q_n)\Big)\\
&\quad +\tau\Big((Q_{n-1}-Q_n)(I-R_{k+1})y_{k+1}R_{k+1}y_{k+1}(I-R_{k+1})(Q_{n-1}-Q_n)\Big).
\end{split}
\end{equation*}
Therefore, by induction,
\begin{align*}
I_2&\leq \tau\Big((Q_{n-1}-Q_n)(I-R_N)y_NR_Ny_N(I-R_N)(Q_{n-1}-Q_n)\Big)\\
&\quad +\sum_{k=n}^{N-1}\tau\Big((Q_{n-1}-Q_n)(I-R_k)y_{k+1}(R_k-R_{k+1})y_{k+1}(I-R_k)(Q_{n-1}-Q_n)\Big).
\end{align*}
By the martingale property of $y$, we further get
\begin{align*}
I_2&\leq \tau\Big((Q_{n-1}-Q_n)(I-R_N)y_NR_Ny_N(I-R_N)(Q_{n-1}-Q_n)\Big)\\
&\quad +\sum_{k=n}^{N-1}\tau\Big((Q_{n-1}-Q_n)(I-R_k)y_{N}(R_k-R_{k+1})y_{N}(I-R_k)(Q_{n-1}-Q_n)\Big).
\end{align*}
Therefore, we have shown that
\begin{equation}\label{boundI2}
\begin{split}
I_2 &\leq \tau\Big((Q_{n-1}-Q_n)(I-R_N)(y_N-I)R_N(y_N-I)(I-R_N)(Q_{n-1}-Q_n)\Big)\\
&\quad +\sum_{k=0}^{N-1}\tau\Big((Q_{n-1}-Q_n)(I-R_k)(y_N-I)(R_k-R_{k+1})(y_N-I)(I-R_k)\Big)
\end{split}
\end{equation}
(by the tracial property, we removed one projection $Q_{n-1}-Q_n$ from the end of the last expression). This is the desired upper bound for $I_2$.

\medskip

\noindent\emph{Step 3.} Let us plug the estimates \eqref{boundI1} and \eqref{boundI2}  into \eqref{Cheb} and then sum over $n$. By the tracial property,
\begin{equation*}\label{sumboundI1}
\begin{split}
\sum_{n=0}^N&\tau\Big((Q_{n-1}-Q_n)(y_N-I)(I-R_N)(y_N-I)(Q_{n-1}-Q_n)\Big)\\
&=\tau\Big((I-Q_N)(y_N-I)(I-R_N)(y_N-I)\Big)\\
&\leq \tau\Big((y_N-I)(I-R_N)(y_N-I)\Big).
\end{split}
\end{equation*}
Analogously, we have
\begin{equation*}\label{sumboundI21}
\begin{split}
\sum_{n=0}^N &\bigg(\tau\Big((Q_{n-1}-Q_n)(I-R_N)(y_N-I)R_N(y_N-I)(I-R_N)(Q_{n-1}-Q_n)\Big)\bigg)\\
&\leq \tau\Big((I-R_N)(y_N-I)R_N(y_N-I)(I-R_N)\Big)
\end{split}
\end{equation*}
and
\begin{equation*}\label{sumboundI22}
\begin{split}
\sum_{n=0}^N\sum_{k=0}^{N-1}& \tau\bigg((Q_{n-1}-Q_n)(I-R_k)(y_N-I)(R_k-R_{k+1})(y_N-I)(I-R_k)\bigg)\\
& \leq \sum_{k=0}^{N-1} \tau\bigg((I-R_k)(y_N-I)(R_k-R_{k+1})(y_N-I)\bigg)\\
& \leq \sum_{k=0}^{N-1} \tau\bigg((I-R_N)(y_N-I)(R_k-R_{k+1})(y_N-I)\bigg)\\
& \leq \tau\Big((I-R_N)(y_N-I)(I-R_N)(y_N-I)\Big).
\end{split}
\end{equation*}
Plugging all these observations  into \eqref{Cheb}, we obtain
\begin{align*}
&(\beta-1)^2\tau(I-Q_N)\\
&=(\beta-1)^2\sum_{n=0}^N \tau(Q_{n-1}-Q_n)\\
&\leq \tau\bigg((y_N-I)(I-R_N)(y_N-I)+(I-R_N)(y_N-I)R_N(y_N-I)(I-R_N)\bigg)\\
&\quad +\tau\bigg((I-R_N)(y_N-I)(I-R_N)(y_N-I)\bigg)\\
&\leq 2\tau\Big((y_N-I)(I-R_N)(y_N-I)\Big).
\end{align*}
It suffices to apply \eqref{good-la} to obtain the desired assertion.
\end{proof}

\subsection{Proof of moment estimates via good-$\lambda$ inequalities} Equipped with the good-$\lambda$ inequality \eqref{good6}, we are ready for the proof of general  moment inequalities. As we have seen above, in the classical case the argument rests on a simple integration and application of H\"older's inequality. Here we proceed similarly, summing appropriately rescaled versions of \eqref{good-lambda},  but we also have to implement some necessary modifications to address the issues which arise in the noncommutative setting.

As previously, we assume that $y=(y_n)_{n=0}^N$ is a finite martingale and $x_N$, $z_N$ are given self-adjoint operators.
We now introduce a class of auxiliary objects.
For a fixed $\gamma>0$, let $(R_n^\gamma)_{n\geq 0}$ be the sequence as previously, built on the martingale $y/\gamma$: that is, we have $R_{-1}^\gamma=I$ and, for any $n\geq 0$,
$$ R_n^\gamma=R_{n-1}^\gamma I_{(-\infty,\gamma)}(R_{n-1}^\gamma y_n R_{n-1}^\gamma).$$
Next, we consider the following modification introduced by Randrianantoanina \cite{R1}. Namely, for a fixed $B>1$, $n\geq 0$ and $k\in\mathbb{Z}$, we set
\begin{equation}\label{defP}
P_n^{B^k}:=\bigwedge_{\ell\geq k} R_n^{B^\ell},
\end{equation}
(i.e., $P_n^{B^k}$ is the projection onto the intersection $\bigcap_{\ell\geq k}R_n^{B^\ell}(H)$).
The reason for the introduction of the family $P$ is to ensure the monotonicity property with respect to both $n$ and $k$. More precisely, note that for any fixed $k$, the projections $(R_n^{B^k})_{n\geq 0}$ are decreasing when $n$ increases; however, there is no monotonicity if we fix $n$ and change $k$. The new projections $(P_n^{B^k})_{n,k}$ have the monotonicity property with respect to both parameters: $P_n^{B^\ell}\leq P_m^{B^k}$ if $n\geq m$ and $\ell\leq k$. Note that in the commutative case we have $P_n^{B^k}=R_n^{B^k}$, and thus we may regard $P_n^{B^k}$ as the ``corrected'' noncommutative version of the indicator function of the set $\big\{\max_{0\leq m\leq n}y_m< B^k\big\}$. We will also use the auxiliary operator $a_N^+=a_N^+(y)$ given by
\begin{equation}\label{defaN}
 a_N^+=\sum_{k\in \mathbb{Z}} B^k(P_N^{B^{k+1}}-P_N^{B^k}).
\end{equation}
In the commutative case, we have $a_N^+=\sum_{k\in \mathbb{Z}} B^k 1_{\{B^{k}\leq  \max_{0\leq m\leq N}y_m< B^{k+1}\}}$ and hence $a$ can be regarded as a weak one-sided maximal operator of $y$. There is a symmetric version $a^-$ of $a^+$, given by $a_N^-=a_N^+(-y)$. The pair $(a^-,a^+)$ will control appropriately the martingale $y$, and hence it is enough to provide an efficient bound for these weak operators.

Here is one of the main results of the paper.

\begin{theorem}\label{main-theorem}
Let $2<p<\infty$ and $x_N, z_N$ belong to $L^{p}(\mathcal M)$. Suppose that for any $\mu>0$, the triple $(x_N/\mu , y/\mu, z_N/\mu)$ satisfies the good-$\lambda$ testing conditions.
Then $y_N$ belongs to $L^{p}(\mathcal M)$. Moreover, we have
\begin{equation}\label{main_moment}
 \|y_N\|_{L^p({\mathcal{M}})}\leq \frac{12 p}{\left(1-\left(1+\frac{1}{p}\right)^{2-p}\right)^{1/2}}\left(\|{x}_N\|_{L^p({\mathcal{M}})}^2+\|z_N\|^2_{L^{p}({\mathcal{M}})}\right)^{1/2}.
\end{equation}
\end{theorem}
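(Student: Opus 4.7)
The plan is to emulate the classical integration argument from Subsection \ref{prel}: the good-$\lambda$ estimate \eqref{good-lambda} takes the place of its commutative predecessor \eqref{good6}, and the integral $\int_0^\infty \lambda^{p-1}\,d\lambda$ is replaced by a geometric sum at a dyadic ratio $B>1$, to be optimized as $B = 1 + 1/p$ at the end. The auxiliary projections $P_N^{B^k}$ and the operators $a_N^\pm$ from \eqref{defP}--\eqref{defaN} serve as noncommutative substitutes for level sets and one-sided maxima of $y$.

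First I apply Theorem \ref{thm2} to the rescaled triple $(x_N/B^k, y/B^k, z_N/B^k)$ with threshold $\beta = B$; the auxiliary projections $R_N$ and $Q_N$ in \eqref{good-lambda} become, after this rescaling, $R_N^{B^k}$ and $R_N^{B^{k+1}}$ respectively, so
$$\tau(I - R_N^{B^{k+1}}) \leq \frac{4}{(B-1)^2 B^{2k}}\, \tau\!\bigl((I - R_N^{B^k})(x_N^2 + z_N^2)\bigr), \qquad k\in\mathbb{Z}.$$
The next step is to express $\|a_N^+\|_{L^p}^p$ in a form into which these estimates can be inserted. Because $(P_N^{B^{k+1}} - P_N^{B^k})_k$ are mutually orthogonal projections (by monotonicity of $P$ in $k$), functional calculus followed by an Abel rearrangement yields $\|a_N^+\|_{L^p}^p = (1 - B^{-p}) \sum_k B^{pk}\, \tau(I - P_N^{B^k})$. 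Using $I - P_N^{B^k} = \bigvee_{\ell \geq k}(I - R_N^{B^\ell})$ from \eqref{defP}, subadditivity of $\tau$ on joins of projections, the rescaled good-$\lambda$ bound, and a switch of the order of summation then gives
$$\|a_N^+\|_{L^p}^p \leq \frac{4 B^p}{(B-1)^2}\, \sum_{m \in \mathbb{Z}} B^{(p-2)m}\, \tau\!\bigl((I - R_N^{B^m})(x_N^2 + z_N^2)\bigr).$$

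The decisive ingredient is now the \emph{operator} Abel identity
$$\sum_{m \in \mathbb{Z}} B^{(p-2)m}(I - P_N^{B^m}) = \frac{B^{p-2}}{B^{p-2}-1}\,(a_N^+)^{p-2},$$
proved by the same telescoping at the projection level (and requiring $p > 2$ for convergence of the underlying geometric series). Since $I - R_N^{B^m} \leq I - P_N^{B^m}$ (from $P \leq R$) and $A = x_N^2 + z_N^2 \geq 0$, one has $\tau((I - R_N^{B^m})A) \leq \tau((I - P_N^{B^m})A)$, and noncommutative H\"older bounds the resulting sum by $\|a_N^+\|_{L^p}^{p-2}(\|x_N\|_{L^p}^2 + \|z_N\|_{L^p}^2)$. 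After a standard preliminary truncation to guarantee $\|a_N^+\|_{L^p}<\infty$ a priori, the factor $\|a_N^+\|_{L^p}^{p-2}$ cancels, leaving a quadratic bound on $\|a_N^+\|_{L^p}$. Running the same argument on $-y$ controls $\|a_N^-\|_{L^p}$, and self-adjointness of $y_N$ yields $\|y_N\|_{L^p} \leq B\,(\|a_N^+\|_{L^p} + \|a_N^-\|_{L^p})$. Substituting $B = 1 + 1/p$ and using the factorization $B^{p-2} - 1 = (1+1/p)^{p-2}\bigl(1 - (1+1/p)^{2-p}\bigr)$ produces, after routine bookkeeping, the constant $12p\bigl(1 - (1+1/p)^{2-p}\bigr)^{-1/2}$.

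The main obstacle is the operator Abel identity together with the subsequent H\"older step. In the commutative case one simply notes that $\sum_m B^{(p-2)m} \mathbf{1}_{\{y_N^* \geq B^m\}}$ is comparable to $(y_N^{*})^{p-2}$ pointwise, but in the noncommutative setting the projections $I - R_N^{B^m}$ do not commute with each other or with $A$, so no such pointwise bound is available. The passage to the corrected family $P_N^{B^k}$ (Randrianantoanina's trick) enforces monotonicity in both $n$ and $k$, which is precisely what collapses the Abel telescoping into a clean operator identity and enables noncommutative H\"older to close the estimate with the optimal $O(p)$ dependence.
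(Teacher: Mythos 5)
Your argument is, in substance, the paper's own proof: the rescaled form of \eqref{good-lambda}, the passage to the monotone family $P_N^{B^k}$, the Abel/Fubini identity $\sum_m B^{(p-2)m}(I-P_N^{B^m})=(1-B^{2-p})^{-1}(a_N^+)^{p-2}$ (this is exactly \eqref{fubini}), H\"older, cancellation of $\|a_N^+\|_p^{p-2}$ after an a priori finiteness argument, and the symmetric bound for $a_N^-$. The only cosmetic difference is that you bound $\tau(I-P_N^{B^k})$ by subadditivity over the join $\bigvee_{\ell\geq k}(I-R_N^{B^\ell})$ and then resum, whereas the paper telescopes the increments $\tau(P_N^{B^{k+2}}-P_N^{B^{k+1}})\leq\tau(I-R_N^{B^{k+1}})$ directly; both routes land on the same inequality \eqref{max1} with the same constant.

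There is, however, one genuine gap: the final step ``self-adjointness of $y_N$ yields $\|y_N\|_{L^p}\leq B(\|a_N^+\|_{L^p}+\|a_N^-\|_{L^p})$.'' In the commutative case this is immediate because $y_N\leq B\,a_N^+$ pointwise on $\{\max_m y_m\geq \text{some }B^k\}$, but in the noncommutative setting no such operator inequality holds: $y_N$ does not commute with the projections $P_N^{B^k}$, so $a_N^+$ does not dominate $y_N$ in the operator order, and ``self-adjointness'' alone gives nothing. What is actually needed — and what the paper proves — is a comparison of distributions: one shows that $\bigl(I-I_{[B^k,\infty)}(a_N^+)\bigr)\wedge I_{[B^k,\infty)}(y_N)=0$ (because $P_N^{B^k}y_NP_N^{B^k}<B^k$), and then invokes the Kaplansky formula to conclude that $I_{[B^k,\infty)}(y_N)$ is Murray--von Neumann equivalent to a subprojection of $I_{[B^k,\infty)}(a_N^+)$, hence $\tau(I_{[B^k,\infty)}(|y_N|))\leq\tau(I_{[B^k,\infty)}(a_N^-))+\tau(I_{[B^k,\infty)}(a_N^+))$; the norm comparison then follows from a discretized distribution-function integration (at the cost of a factor comparable to your $B$). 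Your proof is complete once this projection-equivalence argument is supplied; as written, the step is asserted rather than proved, and it is precisely the step where the noncommutativity bites.
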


\begin{remark}
In all the applications below, it will suffice to verify the testing condition for $\mu=1$; the case of general $\mu>0$ will follow at once by homogeneity argument.
\end{remark}

\begin{proof}[Proof of Theorem \ref{main-theorem}]
 It is easy to check that the good-$\lambda$ testing condition (ii) guarantees $y_N\in L^p(\mathcal M)$ under the assumption that $z_N\in L^p(\mathcal M)$.

 We now prove \eqref{main_moment}.  Let us apply the inequality \eqref{good-lambda} with $\beta=B$ to obtain
$$ \tau(I-R_N^B)\leq 4(B-1)^{-2}\tau\Big((I-R_N^1)({x}_N^2+z^2_N)\Big).$$
We translate this inequality into the language of the projections $P$. The right-hand side is easy to handle: we have $P_N^{1}\leq R_N^{1}$, so
$$ \tau\Big((I-R_N^{1})(x_N^2+z_N^2)\Big)\leq \tau\Big((I-P_N^1)(x_N^2+z_N^2)\Big).$$
To deal with the left-hand side, write
\begin{align*}
\tau\left(I-R_N^{B}\right)&=\tau\left(I-P_N^{B}\right)+\tau\left(P_N^{B}-R_N^{B}\right)\\
 &=\tau\left(I-P_N^{B}\right)+\tau\left(R_N^{B}\wedge P_N^{B^2}-R_N^{B}\right)\\
&\geq \tau\left(I-P_N^{B}\right)-\tau\left(I-P_N^{B^{2}}\right)\\
&=\tau\left(P_N^{B^2}-P_N^{B}\right).
\end{align*}
Combining the above observations, we get
$$ \tau\left(P_N^{B^2}-P_N^B\right)\leq 4(B-1)^{-2}\tau\Big((I-P_N^1)({x}_N^2+z_N^2)\Big),$$
which, by homogeneity (i.e., by replacing $x_N$, $y$, $z_N$  with ${x_N}/B^k$, $y/B^k$ and $z_N/B^k$), implies
\begin{equation}\label{good-hom}
 \tau\left(P_N^{B^{k+2}}-P_N^{B^{k+1}}\right)\leq 4B^{-2k}(B-1)^{-2}\tau\left(\left(I-P_N^{B^k}\right)\left({x}_N^2+z_N^2\right)\right).
\end{equation}

\def\rrttt{Multiply both sides by $\Phi(B^{2k+2})$ and sum over $k\in \mathbb{Z}$. Then the left-hand side of the obtained estimate is equal to $\tau(\Phi((a_N^+)^2))$, directly from \eqref{defaN}. To handle the right-hand side, note that
\begin{align*}
\sum_{k\in\mathbb{Z}} \Phi(B^{2k+2})B^{-2k}(I-P_N^{B^k})&=\sum_{\ell\in\mathbb{Z}}(P_N^{B^{\ell+1}}-P_N^{B^\ell})\sum_{k\leq \ell} \Phi(B^{2k+2})B^{-2k}.
\end{align*}
By \eqref{altalpha}, we have $(B^{2k+2})^{-\alpha_\Phi}\Phi(B^{2k+2})\leq (B^{2\ell+2})^{-\alpha_\Phi}\Phi(B^{2\ell+2})$ if $k\leq \ell$. Hence
\begin{align*}
\sum_{k\leq \ell} \frac{\Phi(B^{2k+2})}{B^{2k}}&\leq (B^{2\ell})^{-\alpha_\Phi}\Phi(B^{2\ell+2})\sum_{k\leq \ell} B^{2k(\alpha-1)}\\
&=\frac{B^{-2\ell}}{1-B^{2(1-\alpha_\Phi)}}\Phi(B^{2\ell+2})\\
&\leq \frac{B^{-2\ell+2\beta_\Phi}}{1-B^{2(1-\alpha_\Phi)}}\Phi(B^{2\ell})\\
&\leq \frac{B^{2\beta_\Phi}}{1-B^{2(1-\alpha_\Phi)}}\Phi'_+(B^{2\ell}),
\end{align*}
where in the third passage we have exploited \eqref{altbeta}. Therefore, coming back to the previous equality,
\begin{align*}
 \sum_{k\in\mathbb{Z}} \frac{\Phi(B^{2k+2})}{B^{2k}}(I-P_N^{B^k})&\leq \frac{B^{2\beta_\Phi}}{1-B^{2(1-\alpha_\Phi)}}\sum_{\ell\in\mathbb{Z}} \Phi'_+(B^{2\ell})(P_N^{B^{\ell+1}}-P_N^{B^\ell})\\
 &=\frac{B^{2\beta_\Phi}}{1-B^{2(1-\alpha_\Phi)}}\Phi'_+((a_N^+)^2).
\end{align*}
Thus we have obtained the estimate
\begin{equation}\label{thus1}
 \tau(\Phi((a_N^+)^2)\leq \frac{4B^{2\beta_\Phi}}{(1-B^{2(1-\alpha_\Phi)})(B-1)^2}\tau\big[\Phi'_+((a_N^+)^2)(z_N^2+b_N)\big].
\end{equation}
Now, set
$$ c=\frac{4B^{2\beta_\Phi}}{(1-B^{2(1-\alpha_\Phi)})(B-1)^2}.$$
By Young's inequality, we see that
\begin{align*}
c\tau\big[&\Phi'_+((a_N^+)^2)(z_N^2+b_N)\big]\\
&=\beta_\Phi^{-1}\tau\bigg[\Phi'_+((a_N^+)^2) \big(\beta_\Phi c(z_N^2+b_N)\big)\bigg]\\
&\leq \beta_\Phi^{-1}\tau\big[\Psi(\Phi'_+((a_N^+)^2))\big]+\beta_\Phi^{-1}\tau\bigg[\Phi\big(\beta_\Phi c(z_N^2+b_N)\big)\bigg]\\
&=\beta_\Phi^{-1}\tau\big[(a_N^+)^2\Phi_+'((a_N^+)^2)-\Phi((a_N^+)^2)\big]+\beta_\Phi^{-1}\tau\bigg[\Phi\big(\beta_\Phi c(z_N^2+b_N)\big)\bigg]\\
&\leq \beta_\Phi^{-1}(\beta_\Phi-1)\tau\big[\Phi((a_N^+)^2)\big]+\beta_\Phi^{-1}\tau\bigg[\Phi\big(\beta_\Phi c(z_N^2+b_N)\big)\bigg],
\end{align*}
where in the last line we have exploited the definition \eqref{defalfabeta} of $\beta_\Phi$. Plugging this into \eqref{thus1} and rearranging terms, we obtain
\begin{align*}
 \tau(\Phi((a_N^+)^2)&\leq \tau\bigg[\Phi\big(\beta_\Phi c(z_N^2+b_N)\big)\bigg]\leq \tau\bigg[\Phi\big(2\beta_\Phi cz_N^2\big)\bigg]+\tau\bigg[\Phi\big(2\beta_\Phi cb_N\big)\bigg].
\end{align*}
There is a symmetric version of this estimate, reading
\begin{align*}
 \tau(\Phi((a_N^-)^2)&\leq \tau\bigg[\Phi\big(\beta_\Phi c(z_N^2+b_N)\big)\bigg]\leq \tau\bigg[\Phi\big(2\beta_\Phi cz_N^2\big)\bigg]+\tau\bigg[\Phi\big(2\beta_\Phi cb_N\big)\bigg].
\end{align*}

It remains to relate $y_N$ to $a_N^\pm$. To this end, note that
$I_{[B^k,\infty)}(y_N)$ is equivalent to a subprojection of $I_{[B^k,\infty)}(a_N^+)$. Indeed, suppose that a nonzero vector $\xi$ belongs to $(I-I_{[B^k,\infty)}(a_N^+))(H)=P_N^{B^{k}}(H)$. From the very construction of the projections $P$ and $R$, we infer that $P_N^{B^k}\leq R_N^{B^k}$ and $R_N^{B^k}y_NR_N^{B^k}<B^k$, so
$$\langle y_N\xi,\xi\rangle=\langle R_N^+y_NR_N^+\xi,\xi\rangle < B^{k}||\xi||^2.$$
Thus $\xi\notin I_{[B^k,\infty)}(y_N)(H)$ which proves the aformentioned equivalence of the projection $I_{[B^k,\infty)}(y_N)$. A similar argument shows that $I_{(-\infty,-B^k]}(y_N)$ is equivalent to a subprojection of $I_{[B^k,\infty)}(a_N^-)$. Consequently, we get
\begin{align*}
 \tau(|y_N|\geq B^k)&=\tau(y_N\leq -B^k)+\tau(y_N\geq B^k)\\
&\leq \tau(a_N^-\leq B^k)+\tau(a_N^+\geq B^k).
\end{align*}
This enforces the appropriate control of the $\Phi$-norm of $y_N$ by the $\Phi$-norms of $a_N^\pm$. Indeed,
\begin{align*}
\tau(\Phi(|y_N|^2))&=\int_0^\infty 2\lambda\Phi'_+(\lambda^2)\tau(|y_N|\geq \lambda)\mbox{d}\lambda\\
&=\sum_{k\in\mathbb{Z}}\int_{B^k}^{B^{k+1}}  2\lambda\Phi'_+(\lambda^2)\tau(|y_N|\geq \lambda)\mbox{d}\lambda\\
&\leq \sum_{k\in\mathbb{Z}} 2B^{k+1}\Phi'_+(B^{2(k+1)})\int_{B^k}^{B^{k+1}} \tau(|y_N|\geq \lambda)\mbox{d}\lambda\\
&\leq \sum_{k\in\mathbb{Z}} 2B^{k+1}\Phi'_+(B^{2(k+1)}) \cdot B^k(B-1)\tau(|y_N|\geq B^k)\\
&\leq \sum_{k\in\mathbb{Z}} 2B^{k+1}\Phi'_+(B^{2(k+1)}) \cdot B^k(B-1)\big(\tau(a_N^+\geq B^k)+\tau(a_N^-\geq B^k)\big).
\end{align*}
Let us split the latter expression into two parts, depending solely on $a_N^+$ and $a_N^-$. We will analyze the first part only, the second is handled with similarly. By \eqref{defalfabeta} and \eqref{altbeta},
$$ \Phi_+'(B^{2(k+1)})\leq \frac{\beta_\Phi \Phi(B^{2(k+1)})}{B^{2(k+1)}}\leq \frac{\beta_\Phi B^{4\beta_\Phi}\Phi(B^{2(k-1)})}{B^{2(k+1)}}\leq \beta_\Phi B^{4(\beta_\Phi-1)}\Phi_+'(B^{2(k-1)}),$$
which implies
\begin{align*}
 &\sum_{k\in\mathbb{Z}} 2B^{k+1}\Phi'_+(B^{2(k+1)}) \cdot B^k(B-1)\tau(a_N^+\geq B^k)\\
 &=B^3\sum_{k\in\mathbb{Z}} 2B^{k-1} \Phi'_+(B^{2(k+1)})\int_{B^{k-1}}^{B^k} \tau(a_N^+\geq \lambda)\mbox{d}\lambda\\
 &\leq \beta_\Phi B^{3+4(\beta_\Phi-1)}\sum_{k\in\mathbb{Z}} 2B^{k-1}\Phi_+'(B^{2(k-1)})\int_{B^{k-1}}^{B^k} \tau(a_N^+\geq \lambda)\mbox{d}\lambda\\
 &\leq \beta_\Phi B^{3+4(\beta_\Phi-1)}\sum_{k\in\mathbb{Z}} \int_{B^{k-1}}^{B^k} 2\lambda \Phi'_+(\lambda^2)\tau(a_N^+\geq \lambda)\mbox{d}\lambda\\
 &=\beta_\Phi B^{3+4(\beta_\Phi-1)}\tau\big(\Phi((a_N^+)^2)\big).
\end{align*}
Putting all the above facts together, we finally arrive at
\begin{equation}\label{Phiin}
\begin{split}
 \tau(\Phi(|y_N|^2))&\leq \beta_\Phi B^{3+4(\beta_\Phi-1)}\bigg[\tau\big(\Phi((a_N^+)^2)\big)+\tau\big(\Phi((a_N^-)^2)\big)\bigg]\\
 &\leq 2\beta_\Phi B^{3+4(\beta_\Phi-1)}\bigg(\tau\bigg[\Phi\big(2\beta_\Phi cz_N^2\big)\bigg]+\tau\bigg[\Phi\big(2\beta_\Phi cb_N\big)\bigg]\bigg).
\end{split}
\end{equation}}
\noindent Let us now multiply the above inequality by $B^{kp}$ and sum over $k\in \mathbb{Z}$. Then the left-hand side of the obtained estimate is equal to $B^{-p}\tau((a_N^+)^p)$; to compute the right-hand side, observe that
\begin{equation}\label{fubini}
\begin{split}
\sum_{k\in\mathbb{Z}} B^{k(p-2)}\left(I-P_N^{B^k}\right)&=\sum_{k\in\mathbb{Z}}\sum_{\ell\geq k} B^{k(p-2)}\left(P_N^{B^{\ell+1}}-P_N^{B^\ell}\right)\\
&=\sum_{\ell\in\mathbb{Z}} \left(P_N^{B^{\ell+1}}-P_N^{B^\ell}\right)\sum_{k\leq \ell} B^{k(p-2)}=\frac{(a_N^+)^{p-2}}{1-B^{2-p}}.
\end{split}
\end{equation}
Thus we have established the estimate
$$ B^{-p}\tau\Big((a_N^+)^p\Big)\leq \frac{4(B-1)^{-2}}{1-B^{2-p}}\tau\Big((a_N^+)^{p-2}\big({x}_N^2+z_N^2\big)\Big).$$
Since $y_N\in L^p(\mathcal M)$, an argument similar to \cite[Lemma 5.3]{JOW} implies that
$a_N^+\in L^p({\mathcal{M}})$. Thus the application of H\"older's inequality and triangle inequality to the previous estimate yields
\begin{align*}
 B^{-p}\left\|a_N^+\right\|_{L^p({\mathcal{M}})}^2&\leq \frac{4(B-1)^{-2}}{1-B^{2-p}}\left\|{x}_N^2+z_N^2\right\|_{L^{p/2}({\mathcal{M}})}\\
 &\leq \frac{4(B-1)^{-2}}{1-B^{2-p}}\left(\left\|{x}_N\right\|_{L^p({\mathcal{M}})}^2+\left\|z_N\right\|^2_{L^{p}({\mathcal{M}})}\right).
\end{align*}
This is equivalent to saying that
\begin{equation}\label{max1}
 \left\|a_N^+\right\|_{L^p({\mathcal{M}})} \leq \frac{2B^{p/2}(B-1)^{-1}}{(1-B^{2-p})^{1/2}}\left(\|{x}_N\|_{L^p({\mathcal{M}})}^2+\|z_N\|^2_{L^{p}({\mathcal{M}})}\right)^{1/2}.
\end{equation}
Symmetrically, we obtain that
\begin{equation}\label{max2}
\|a_N^-\|_{L^p({\mathcal{M}})}\leq \frac{2B^{p/2}(B-1)^{-1}}{(1-B^{2-p})^{1/2}}\left(\|{x}_N\|_{L^p({\mathcal{M}})}^2+\|z_N\|^2_{L^{p}({\mathcal{M}})}\right)^{1/2}.
 \end{equation}
It remains to relate $y_N$ to $a_N^\pm$. To this end, note that
$I_{[B^k,\infty)}(y_N)$ is equivalent to a subprojection of $I_{[B^k,\infty)}(a_N^+)$. Indeed, suppose that a nonzero vector $\xi$ belongs to $\big(I-I_{[B^k,\infty)}(a_N^+)\big)(H)=P_N^{B^{k}}(H)$. From the very construction of the projections $P$ and $R$, we infer that $P_N^{B^k}\leq R_N^{B^k}$ and $R_N^{B^k}y_NR_N^{B^k}<B^k$, so
$$\langle y_N\xi,\xi\rangle=\left\langle P_N^{B^k}y_NP_N^{B^k}\xi,\xi\right\rangle < B^{k}||\xi||^2.$$
Thus $\xi\notin I_{[B^k,\infty)}(y_N)(H)$ which means
$$\Big(I-I_{[B^k,\infty)}(a_N^+)\Big) \wedge I_{[B^k,\infty)}(y_N)=0.$$
Then by the Kaplansky formula (cf. \cite[Theorem 6.1.7]{KR2}), we have
\begin{align*}
I_{[B^k,\infty)}(y_N) & =I_{[B^k,\infty)}(y_N)-\Big(I-I_{[B^k,\infty)}(a_N^+)\Big)\wedge I_{[B^k,\infty)}(y_N)\\
& \sim I_{[B^k,\infty)}(y_N)\vee \Big(I-I_{[B^k,\infty)}(a_N^+)\Big)-\Big(I-I_{[B^k,\infty)}(a_N^+)\Big)\\
&\leq I_{[B^k,\infty)}(a_N^+),
\end{align*}
which proves the aformentioned equivalence of the projection $I_{[B^k,\infty)}(y_N)$. A similar argument shows that $I_{(-\infty,-B^k]}(y_N)$ is equivalent to a subprojection of $I_{[B^k,\infty)}(a_N^-)$. Consequently, we get
\begin{align*}
 \tau\Big(I_{[B^k,\infty)}(|y_N|)\Big)&=\tau\Big(I_{(-\infty,-B^k]}(y_N)\Big)+\tau\Big(I_{[B^k,\infty)}(y_N)\Big)\\
&\leq \tau\Big(I_{[B^k,\infty)}(a_N^-)\Big)+\tau\Big(I_{[B^k,\infty)}(a_N^+)\Big).
\end{align*}
This enforces the appropriate control of the $L^p$ norm of $y_N$ by the $L^p$ norms of $a_N^\pm$. Indeed,
\begin{align*}
\left\|y_N\right\|_{L^p({\mathcal{M}})}^p&=p\int_0^\infty \lambda^{p-1}\tau\Big(I_{[\lambda,\infty)}(|y_N|)\Big)\mbox{d}\lambda\\
&=p\sum_{k\in\mathbb{Z}}\int_{B^k}^{B^{k+1}}  \lambda^{p-1}\tau\Big(I_{[\lambda,\infty)}(|y_N|)\Big)\mbox{d}\lambda\\
&\leq p\sum_{k\in\mathbb{Z}} B^{(k+1)(p-1)}\int_{B^k}^{B^{k+1}} \tau\Big(I_{[\lambda,\infty)}(|y_N|)\Big)\mbox{d}\lambda\\
&\leq p\sum_{k\in\mathbb{Z}} B^{(k+1)(p-1)} B^k(B-1)\tau\Big(I_{[B^k,\infty)}(|y_N|)\Big)\\
&\leq pB^{p-1}(B-1)\sum_{k\in\mathbb{Z}} B^{kp}\left(\tau\Big(I_{[B^k,\infty)}(a_N^-)\Big)+\tau\Big(I_{[B^k,\infty)}(a_N^+)\Big)\right)\\
&=pB^{p-1}(B-1)\frac{\|a_N^-\|_{L^p(\mathcal{M})}^p+\|a_N^+\|_{L^p(\mathcal{M})}^p}{1-B^{-p}},
\end{align*}
where in the last line we have performed a calculation similar to that in \eqref{fubini}. Thus, exploiting \eqref{max1} and \eqref{max2}, we arrive at
$$ \|y_N\|_{L^p({\mathcal{M}})}\leq C_{p,B}\left(\|{x}_N\|_{L^p({\mathcal{M}})}^2+\|z_N\|^2_{L^{p}({\mathcal{M}})}\right)^{1/2},$$
where
$$ C_{p,B}=\left(\frac{2p B^{p-1}(B-1)}{1-B^{-p}}\right)^{1/p}\cdot \frac{2B^{p/2}(B-1)^{-1}}{(1-B^{2-p})^{1/2}}.$$
Let us plug $B=1+1/p$. Since
$$ \frac{9}{4}\leq \left(1+\frac{1}{p}\right)^p\leq 3,$$
we easily check that
$$C_{p,B}\leq \frac{12 p}{\left(1-\left(1+\frac{1}{p}\right)^{2-p}\right)^{1/2}}.$$
This is precisely the claim.
\end{proof}

\section{Some classical inequalities revisited}\label{applications}
\subsection{Burkholder-Gundy inequalities}\label{ABDG}
The first application of the above approach concerns the noncommutative Burkholder-Gundy inequalities, which is the most fundamental result due to Pisier and Xu \cite{PX} in noncommutative martingale theory, where the best constants were investigated in \cite{JX2}. We start with the simpler bound; for the sake of notational convenience, we denote the underlying arbitrary martingale with the letter $y$.

\begin{theorem}
For any $p\geq 2$ and any finite self-adjoint martingale $y=(y_n)_{n=0}^N$, we have the estimate
$$ \|y\|_{L^p(\mathcal{M})}\leq C_p \|y\|_{H^p(\mathcal{M})},$$
where $C_p=O(p)$ as $p\to \infty$. The order is optimal, even in the classical case.
\end{theorem}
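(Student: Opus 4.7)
The plan is to deduce the inequality as a direct corollary of Theorem \ref{main-theorem}. For a self-adjoint martingale, the $H^p(\mathcal{M})$ norm (for $p\geq 2$) coincides with $\|S_N(y)\|_{L^p(\mathcal{M})}$, since $dy_n^* = dy_n$ makes column and row square functions agree with $S_N(y) = \bigl(\sum_{n=0}^N dy_n^2\bigr)^{1/2}$. The whole proof then reduces to choosing the ``testing operators'' $x_N$ and $z_N$ correctly so that the right-hand side of \eqref{main_moment} is controlled by $\|S_N(y)\|_{L^p(\mathcal{M})}$.

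First I would make the very natural choice $x_N = z_N = S_N(y)$, so that $x_N^2 = z_N^2 = \sum_{n=0}^N dy_n^2$. With this choice, the strong good-$\lambda$ testing conditions of Definition \ref{strong test condition def} become totally transparent: for every $k\geq 0$, since each $dy_m^2$ is positive,
\[
\sum_{m=k+1}^N \mathcal{E}_k(dy_m^2) \;\leq\; \sum_{m=0}^N \mathcal{E}_k(dy_m^2) \;=\; \mathcal{E}_k(x_N^2),
\]
while for the second condition we use that $dy_k\in\mathcal{M}_k$, giving
\[
dy_k^2 \;=\; \mathcal{E}_k(dy_k^2) \;\leq\; \sum_{m=0}^N \mathcal{E}_k(dy_m^2) \;=\; \mathcal{E}_k(z_N^2).
\]
By the observation following Definition \ref{strong test condition def}, this gives the ordinary good-$\lambda$ testing conditions of Definition \ref{test condition}. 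Since the conditions are invariant under the simultaneous rescaling $(x_N,y,z_N)\mapsto(x_N/\mu,y/\mu,z_N/\mu)$, the hypothesis of Theorem \ref{main-theorem} is fulfilled for every $\mu>0$.

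Applying Theorem \ref{main-theorem} then yields
\[
\|y_N\|_{L^p(\mathcal{M})} \;\leq\; \frac{12 p}{\bigl(1-(1+1/p)^{2-p}\bigr)^{1/2}}\,\bigl(\|S_N(y)\|_{L^p(\mathcal{M})}^2 + \|S_N(y)\|_{L^p(\mathcal{M})}^2\bigr)^{1/2},
\]
so that $\|y_N\|_{L^p(\mathcal{M})} \leq C_p\,\|S_N(y)\|_{L^p(\mathcal{M})}$ with
\[
C_p \;=\; \frac{12\sqrt{2}\, p}{\bigl(1-(1+1/p)^{2-p}\bigr)^{1/2}}.
\]
Since $(1+1/p)^{2-p}\to e^{-1}$ as $p\to\infty$, the denominator converges to $(1-e^{-1})^{1/2}$, and therefore $C_p = O(p)$. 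The matching lower bound asserting the optimality of this order is classical and recorded in \cite{JX2}, so no new argument is needed in that direction.

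There is essentially no obstacle to overcome in this proof: the whole difficulty was absorbed into the abstract machinery of Section 3, and the only (minor) step requiring thought is the recognition that the pair $(S_N(y), S_N(y))$ simultaneously dominates both the ``tail'' conditional sum $\sum_{m>k}\mathcal{E}_k(dy_m^2)$ and each individual square $dy_k^2$. This is precisely the observation that makes the good-$\lambda$ method effective in the Burkholder--Gundy setting and gives the optimal order $O(p)$ with no further optimisation.
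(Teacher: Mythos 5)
Your proof is correct and follows essentially the same route as the paper: both take $x_N=z_N=S_N(y)$ and invoke Theorem \ref{main-theorem}, yielding the same constant $12\sqrt{2}\,p\,\bigl(1-(1+1/p)^{2-p}\bigr)^{-1/2}$. The only (harmless) difference is that you verify the strong testing conditions of Definition \ref{strong test condition def}, whereas the paper checks the original conditions of Definition \ref{test condition} directly; since the former imply the latter, this is a legitimate shortcut.
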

\begin{proof}
For $p=2$ the estimate holds with the constant $1$, so we may assume that $p>2$.
Let
$$x_N=z_N:=\left(\sum_{k=0}^N dy_k^2\right)^{1/2}.$$
We now verify that the triple $(x_N, y, z_N)$ satisfies the good-$\lambda$ testing conditions. Indeed, the second assumption (ii) is evident, since $z_N^2\geq dy_k^2$ for each $k$. Concerning the first condition (i), we check that
\begin{align*}
\sum_{n=0}^{N} &\sum_{k=n+1}^N\tau\Big( (R_{n-1}-R_{n})dy_kR_{n-1}dy_k(R_{n-1}-R_{n}) \Big)\\
&\leq\sum_{n=0}^{N} \sum_{k=n+1}^N \tau\Big((R_{n-1}-R_{n})dy_k^2(R_{n-1}-R_{n}) \Big)\\
&\leq\sum_{n=0}^{N}\tau\Big( (R_{n-1}-R_{n})x_N^2(R_{n-1}-R_{n}) \Big)\\
&=\tau\Big((I-R_N)x_N^2\Big).
\end{align*}
Therefore, the application of \eqref{main_moment} is allowed; however, this estimate is precisely the claim, with
$$ C_p=\frac{12 p}{\left(1-\left(1+\frac{1}{p}\right)^{2-p}\right)^{1/2}}\cdot 2^{1/2}. $$
For the optimality of the order $O(p)$, consult e.g. \cite{B2} or \cite{JX2}.
\end{proof}

We turn our attention to the reverse estimate.

\begin{theorem}
For any $p\geq 2$ and any finite self-adjoint martingale $x=(x_n)_{n=0}^N$, we have the bound
$$ \|x\|_{H^p(\mathcal{M})}\leq C_p \|x\|_{L^p(\mathcal{M})},$$
where $C_p=O(p)$ as $p\to \infty$. The order is optimal.
\end{theorem}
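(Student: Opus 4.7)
My plan is to deduce the reverse inequality from Theorem~\ref{main-theorem} by realising $S_N(x)$ as the $L^p$-norm of the terminal value of an auxiliary self-adjoint martingale on an enlarged von Neumann algebra. A natural construction is the Rademacher-randomised sum
\[
y_n=\sum_{k=0}^{n}dx_k\otimes\epsilon_k,
\]
considered on $\widetilde{\mathcal{M}}=\mathcal{M}\,\bar\otimes\,L^\infty(\Omega)$ with filtration $\widetilde{\mathcal{M}}_n=\mathcal{M}_n\,\bar\otimes\,\sigma(\epsilon_0,\dots,\epsilon_n)$, where $(\epsilon_k)$ is an independent Rademacher sequence. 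The noncommutative Khintchine inequality of Lust-Piquard gives $\|S_N(x)\|_{L^p(\mathcal{M})}\lesssim\|y_N\|_{L^p(\widetilde{\mathcal{M}})}$ with an absolute constant, so it suffices to bound the right-hand side in terms of $\|x_N\|_p$. An alternative and essentially equivalent realisation is the off-diagonal embedding $dx_k\mapsto dx_k\otimes e_{k,0}$ into $\mathcal{M}\,\bar\otimes\,M_{N+1}(\mathbb{C})$ followed by the standard $2\times 2$ self-adjoint symmetrisation, which yields $\|y_N\|_p = 2^{1/p}\|S_N(x)\|_p$ exactly.

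Next I would verify the strong good-$\lambda$ testing conditions of Definition~\ref{strong test condition def} for the triple $(x_N\otimes 1,\,y,\,\widetilde z_N)$. Condition (i) is immediate from $dy_k^2=dx_k^2\otimes 1$ and the telescoping identity
\[
\sum_{m>k}\mathcal{E}_k(dx_m^2)=\mathcal{E}_k(x_N^2)-x_k^2\leq \mathcal{E}_k(x_N^2).
\]
Condition (ii) requires a dominator $\widetilde z_N$ with $\widetilde{\mathcal{E}}_k(\widetilde z_N^2)\geq dx_k^2\otimes 1$; using the pointwise bound $dx_k^2\leq 2(x_k^2+x_{k-1}^2)$ together with the operator-valued form of Doob's maximal inequality established earlier in Section~\ref{applications}, one can produce a positive operator $a\in L^p(\mathcal{M})$ with $x_k^2\leq a^2$ for every $k$ and $\|a\|_p$ proportional to $\|x_N\|_p$, whereupon $\widetilde z_N=2a\otimes 1$ does the job.

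Plugging these choices into Theorem~\ref{main-theorem} and combining with Khintchine then yields the desired estimate $\|S_N(x)\|_p\lesssim C_p\|x_N\|_p$ with $C_p$ of polynomial order in $p$. The optimality of the order $O(p)$ as $p\to\infty$ is classical, cf.~\cite{JX2}, where explicit free martingales are exhibited on which the constant is forced to grow linearly in $p$.

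The principal obstacle is to obtain the \emph{optimal} order $O(p)$ rather than the $O(p^2)$ that the naive chaining -- Theorem~\ref{main-theorem} with constant of order $p$, followed by Doob's inequality with constant also of order $p$ -- would deliver. The gain of a factor of $p$ is typically achieved either by sharpening the choice of $\widetilde z_N$, replacing the crude pointwise bound $dx_k^2\leq 2(x_k^2+x_{k-1}^2)$ by a conditional bound that exploits adaptedness of $(dx_k)$, or by a dual use of the forward Burkholder-Gundy inequality just established, which effectively converts the Doob step into a square-function step and absorbs the extra factor of $p$. The remainder of the argument is a mechanical concatenation of Theorem~\ref{main-theorem}, Khintchine and Doob once $\widetilde z_N$ is in place.
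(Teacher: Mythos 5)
Your overall architecture --- realise $S_N(x)$ as (a block of) a self-adjoint martingale $y$ on an enlarged algebra and feed the resulting triple into Theorem \ref{main-theorem} --- is exactly the paper's, and your verification of testing condition (i) via the telescoping identity $\sum_{m>k}\mathcal{E}_k(dx_m^2)=\mathcal{E}_k(x_N^2)-x_k^2$ is correct. The genuine gap is that you never produce a dominator $z_N$ whose $L^p$-norm is controlled by $\|x_N\|_p$ \emph{uniformly in $p$}: your closing paragraph concedes that this is ``the principal obstacle'' and offers only unexecuted suggestions. Since the qualitative inequality is already in Pisier--Xu, the entire content of the theorem is the order $O(p)$, so this is the proof rather than a detail. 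The paper's resolution is elementary and bypasses Doob altogether: take $z_N=\bigl(\sum_{k}|d\widetilde{x}_k|^p\bigr)^{1/p}$. Operator monotonicity of $t\mapsto t^{2/p}$ gives $z_N^2\geq (d\widetilde{x}_k)^2=dy_k^2$, so condition (ii) holds, while interpolation between $\bigl(\sum_k\|dx_k\|_2^2\bigr)^{1/2}\leq\|x_N\|_2$ and $\sup_k\|dx_k\|_\infty\leq 2\|x_N\|_\infty$ yields $\|z_N\|_p=\bigl(\sum_k\|d\widetilde{x}_k\|_p^p\bigr)^{1/p}\leq 2^{1-2/p}\|\widetilde{x}_N\|_p$ (this is \eqref{boundb}), a bound with constant at most $2$ for all $p\geq 2$. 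Plugging into Theorem \ref{main-theorem} then gives $O(p)$ with no further input.

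Two remarks on your Doob-based alternative. First, your diagnosis that ``Doob costs a factor of $p$'' conflates the dual Doob inequality (constant $O(p^2)$ as $p\to\infty$) with the Doob \emph{maximal} inequality, whose constant is bounded as $p\to\infty$ and degenerates only as the exponent tends to $1$; applied to the positive martingale $(\mathcal{E}_k(x_N^2))_k$ in $L^{p/2}$ it produces $a'\geq\mathcal{E}_k(x_N^2)\geq x_k^2$ with $\|a'\|_{p/2}\lesssim\|x_N\|_p^2$ for $p$ bounded away from $2$, so your route could in principle be salvaged for large $p$ --- but you would still have to argue this, and near $p=2$ the constant blows up. Second, be careful with ``produce $a$ with $x_k^2\leq a^2$ for every $k$'': a two-sided bound $-a\leq x_k\leq a$ does \emph{not} imply $x_k^2\leq a^2$ for noncommuting operators, so one must pass through the Kadison--Schwarz inequality $x_k^2=(\mathcal{E}_kx_N)^2\leq\mathcal{E}_k(x_N^2)$ and majorise the latter. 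As written, the proposal does not establish the stated order of the constant.
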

\begin{proof}
  Here we will need to embed $\mathcal{M}$ into a larger von Neumann algebra in order to represent the square function of $x$ as a modulus of a certain self-adjoint martingale $y$ (which, in turn, will enable us to use the machinery developed above).  Consider the larger algebra $\mathcal{N}=\mathbb{M}_{N+2}\overline{\otimes}  \mathcal{M}$ equipped with the standard tensor trace (which will be denoted by $\nu$), where $\mathbb{M}_{N+2}$ is the algebra of $(N+2)\times (N+2)$ matrices with the usual trace. This larger algebra can be viewed as $(N+2)\times (N+2)$-matrices with entries belonging to $\mathcal{M}$. We now introduce another sequence $(y_n)_{n=0}^N$, this time with terms in the larger algebra, given by
$$ y_n=\sum_{k=0}^n (e_{1,k+2}+e_{k+2,1})\otimes dx_k.$$
Here $e_{i,j}$ are the standard units of $\mathbb{M}_{N+2}$.
This is a self-adjoint martingale with respect to the filtration $(\mathbb{M}_{N+2}\overline{\otimes} \mathcal{M}_n)_{n=0}^N$. Furthermore, it is easy to see that $ y_n^2\geq e_{11}\otimes S_n^2(x)$, and hence also $ |y_n|\geq e_{11}\otimes S_n(x)$; thus, the analysis of the tail of $S_N(x)$ can be deduced from that of the tail of $y_N$. We will also need to transfer the martingale $x$ into the context of the larger algebra  $\mathcal{N}$. To this end, we consider the process $\widetilde{x}$ defined by
$$d\widetilde{x}_n=(e_{1,1}+e_{k+2,k+2})\otimes dx_n.$$
Then $\widetilde{x}$ is an adapted martingale, with the explicit formula given by
$$ \widetilde{x}_n=e_{1,1}\otimes x_n+\sum_{k=0}^n  e_{k+2,k+2}\otimes dx_k.$$

Consider the triple $(\widetilde x_N, y, z_N)$, where $ z_N=\left(\sum_{k=0}^N |d\widetilde{x}_k|^p\right)^{1/p}$. We shall verify that $(\widetilde x_N, y, z_N)$ satisfies the good-$\lambda$ testing conditions. First, observe that $dy_n^2= (d\widetilde{x}_n)^2$ for each $n$ and hence the first assumption (i) is satisfied. Indeed,
\begin{align*}
\sum_{n=0}^{N} & \sum_{k=n+1}^N \nu\Big((R_{n-1}-R_{n})dy_kR_{n-1}dy_k(R_{n-1}-R_{n}) \Big)\\
&\leq\sum_{n=0}^{N} \sum_{k=n+1}^N \nu\Big((R_{n-1}-R_{n})dy_k^2(R_{n-1}-R_{n}) \Big)\\
&=\sum_{n=0}^{N} \sum_{k=n+1}^N \nu\Big((R_{n-1}-R_{n})(d\widetilde{x}_k)^2(R_{n-1}-R_{n})\Big),
\end{align*}
which, by the martingale property of $\widetilde{x}$, is equal to
\begin{align*}
\sum_{n=0}^{N} &\nu\Big((R_{n-1}-R_{n})(\widetilde{x}_N-\widetilde{x}_n)^2(R_{n-1}-R_{n}) \Big)\\
&=\sum_{n=0}^{N} \nu\Big((R_{n-1}-R_{n})(\widetilde{x}_N^2-\widetilde{x}_n^2)(R_{n-1}-R_{n}) \Big)\\
&\leq \sum_{n=0}^{N} \nu\Big((R_{n-1}-R_{n})\widetilde{x}_N^2(R_{n-1}-R_n)\Big)\\
&=\nu\Big((I-R_N)\widetilde{x}_N^2\Big).
\end{align*}
Furthermore, the second assumption (ii) is also satisfied. Since the map $t\mapsto t^{2/p}$ is operator-monotone, we easily see that $z_N^2\geq (|d\widetilde{x}_k|^p)^{2/p}=(d\widetilde{x}_k)^2$ for each $k$, and hence also $z_N^2\geq dy_k^2$ for all $k$; the latter bound clearly yields the validity of (ii).

Since the triple $(\widetilde x_N, y, z_N)$ satisfies the good-$\lambda$ testing conditions, we may apply the inequality \eqref{main_moment} and obtain
$$ \|S_N(x)\|_{L^p(\mathcal{M})}\leq \|y_N\|_{L^p(\mathcal{N})}\leq \frac{12p}{\left(1-\left(1+\frac{1}{p}\right)^{2-p}\right)^{1/2}}\left(\|\widetilde{x}_N\|_{L^p(\mathcal{N})}^2+\|z_N\|_{L^p(\mathcal{N})}^2\right)^{1/2}.$$
By interpolation, we have the following estimate (see also Junge and Xu \cite{JX2}):
\begin{equation}\label{boundb}
\|z_N\|_{L^{p}(\mathcal{N})}=\left(\sum_{k=0}^N \|d\widetilde{x}_k\|_{L^{p}(\mathcal{N})}^p\right)^{1/p}\leq 2^{1-2/p}\|\widetilde{x}_N\|_{L^{p}(\mathcal{N})},
\end{equation}
and hence
$$ \|S_N(x)\|_{L^p(\mathcal{M})}\leq \|y_N\|_{L^p(\mathcal{N})}\leq \frac{12p(1+2^{2-4/p})^{1/2}}{\left(1-\left(1+\frac{1}{p}\right)^{2-p}\right)^{1/2}}\|\widetilde{x}_N\|_{L^p(\mathcal{N})}.$$
Similar to \eqref{boundb}, we obtain that
$$\|\widetilde{x}_N\|_{L^p(\mathcal{N})}^p=\|x_N\|_{L^p(\mathcal{M})}^p+\sum_{k=0}^N \|dx_k\|_{L^p(\mathcal{M})}^p\leq (1+2^{p-2})\|x_N\|_{L^p(\mathcal{M})}^p,$$
which combined with the previous bound finally gives
$$ \|S_N(x)\|_{L^p(\mathcal{M})}\leq C_p\|x_N\|_{L^p(\mathcal{M})},$$
with
$$ C_p=\frac{12p(1+2^{2-4/p})^{1/2}(1+2^{p-2})^{1/p}}{\left(1-\left(1+\frac{1}{p}\right)^{2-p}\right)^{1/2}}.$$
This yields the desired estimate. The sharpness of the order follows from \cite{JX2}.
\end{proof}

\subsection{Inequalities for martingale transforms}\label{transforms}
Our approach immediately yields the $L^p$ boundedness for martingale transforms in the range $1<p<\infty$. It was originally proved by Randrianantoanina \cite{R1}.

\begin{theorem}
Suppose that $x=(x_n)_{n\geq 0}$, $y=(y_n)_{n\geq 0}$ are self-adjoint martingales such that for each $n\geq 0$ we have $dy_n=v_ndx_n$ where $v=(v_n)_{n\geq 0}$ is a sequence with values in $[-1,1]$. Then for any $1<p<\infty$ there is a finite constant $C_p$ such that
$$ \|y_N\|_{L^p(\mathcal{M})}\leq C_p\|x_N\|_{L^p(\mathcal{M})}.$$
Furthermore, $C_p$ is of order $O((p-1)^{-1})$ as $p\to 1$ and $O(p)$ as $p\to \infty$. Both orders are optimal, as they are already optimal in the classical case.
\end{theorem}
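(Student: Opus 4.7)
The plan is to establish the bound for $p\geq 2$ via the general moment inequality of Theorem \ref{main-theorem}, and then recover the range $1<p<2$ by duality.

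For $p\geq 2$, mimic the enlargement construction used in the reverse Burkholder-Gundy proof of Subsection \ref{ABDG}. Working in $\mathcal{N}=\mathbb{M}_{N+2}\overline{\otimes}\mathcal{M}$, define self-adjoint martingales $\widetilde{x}$, $\widetilde{y}$ with respect to $(\mathbb{M}_{N+2}\overline{\otimes}\mathcal{M}_n)_{n=0}^N$ by
$$d\widetilde{x}_n=(e_{1,1}+e_{n+2,n+2})\otimes dx_n,\qquad d\widetilde{y}_n=(e_{1,1}+e_{n+2,n+2})\otimes dy_n.$$
The transform relation $d\widetilde{y}_n=(I\otimes v_n)\,d\widetilde{x}_n$ is preserved, and since $\|v_n\|_\infty\leq 1$ one obtains the operator domination $|d\widetilde{y}_n|^2\leq |d\widetilde{x}_n|^2$. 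Set $z_N=\bigl(\sum_{k=0}^N|d\widetilde{x}_k|^p\bigr)^{1/p}$, for which the interpolation estimate \eqref{boundb} yields $\|z_N\|_{L^p(\mathcal{N})}\leq 2^{1-2/p}\|\widetilde{x}_N\|_{L^p(\mathcal{N})}\lesssim \|x_N\|_{L^p(\mathcal{M})}$.

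Next, verify the good-$\lambda$ testing conditions for the triple $(\widetilde{x}_N,\widetilde{y},z_N)$. Condition (ii) is immediate, since $(d\widetilde{y}_k)^2\leq (d\widetilde{x}_k)^2\leq z_N^2$, the last step being operator monotonicity of $t\mapsto t^{2/p}$ for $p\geq 2$. Condition (i) follows by the telescoping argument already carried out in the reverse Burkholder-Gundy proof: the domination $(d\widetilde{y}_k)^2\leq (d\widetilde{x}_k)^2$, the martingale property of $\widetilde{x}$, and the adapted-ness of $R_{n-1}-R_n$ in $\mathcal{N}_n$ produce
$$\sum_{k=n+1}^N\tau\bigl((R_{n-1}-R_n)(d\widetilde{x}_k)^2(R_{n-1}-R_n)\bigr)=\tau\bigl((R_{n-1}-R_n)(\widetilde{x}_N-\widetilde{x}_n)^2(R_{n-1}-R_n)\bigr),$$
which is at most $\tau((R_{n-1}-R_n)\widetilde{x}_N^2)$; summation over $n$ closes the argument. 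Theorem \ref{main-theorem}, together with the trivial bound $\|y_N\|_{L^p(\mathcal{M})}\leq \|\widetilde{y}_N\|_{L^p(\mathcal{N})}$, then yields $\|y_N\|_{L^p(\mathcal{M})}\leq C_p\|x_N\|_{L^p(\mathcal{M})}$ with $C_p=O(p)$ as $p\to\infty$.

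For $1<p<2$, conclude by duality. Since $v_n$ is self-adjoint and adapted, the standard noncommutative martingale pairing identifies the $L^{p'}$-adjoint of the transform $x\mapsto y$ with a martingale transform whose coefficient sequence is again bounded by $1$ in modulus. Consequently the $O(p)$ estimate at the level $p\geq 2$ transfers to an $O(p)=O((p'-1)^{-1})$ estimate at $p'=p/(p-1)\in(1,2]$; optimality of both limit orders is classical. The main delicate point in a complete write-up is not the good-$\lambda$ verification, which transcribes almost verbatim the reverse Burkholder-Gundy argument, but rather the careful justification of the duality step, in particular tracking the order in which the coefficients $v_n$ appear alongside the martingale differences in the noncommutative bilinear pairing.
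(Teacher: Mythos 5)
Your proof is correct and follows essentially the same route as the paper: verify the good-$\lambda$ testing conditions from $dy_n^2\leq dx_n^2$ via the telescoping martingale argument, invoke Theorem \ref{main-theorem} together with the interpolation bound \eqref{boundb} for $z_N$, and dualize for $1<p<2$. The only difference is that your amplification to $\mathbb{M}_{N+2}\overline{\otimes}\mathcal{M}$ is superfluous here: since $y$ is already a genuine self-adjoint martingale in $\mathcal{M}$ (unlike the square function in the reverse Burkholder--Gundy proof), the paper applies Theorem \ref{main-theorem} directly to the triple $(x_N,y,z_N)$ with $z_N=\bigl(\sum_{k=0}^N|dx_k|^p\bigr)^{1/p}$; also, because the coefficients $v_n$ are scalars in $[-1,1]$, the duality step you flag as delicate is in fact routine, as the adjoint transform has the same scalar coefficients.
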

\begin{proof}
For $p=2$, the inequality holds with the constant $1$. Suppose that $p>2$.
Set $z_N=\left(\sum_{k=0}^N |dx_k|^p\right)^{1/p}$. Then $z_N^2$ majorizes $dx^2$ and $dy^2$.
On the other hand, we have $dy_n^2\leq dx_n^2$ for all $n$ and hence a calculation from the previous subsection shows that the triple $(x_N, y, z_N)$ satisfies the good-$\lambda$ testing conditions.  Therefore, the estimate \eqref{main_moment} and \eqref{boundb} give
$$ \|y_N\|_{L^p(\mathcal{M})}\leq \frac{12p(1+2^{2-4/p})^{1/2}}{\left(1-\left(1+\frac{1}{p}\right)^{2-p}\right)^{1/2}}\|x_N\|_{L^p(\mathcal{M})}.$$
Note that the above constant is of order $O(p)$ as $p\to \infty$. By duality, we obtain the corresponding bound in the range $1<p<2$, with the constant of the order $O((p-1)^{-1})$ as $p\to 1$. Let us briefly remark here that standard interpolation argument allows to remove the blow-up of the constant as $p\to 2$. In the classical case, the optimal choice for $C_p$ is equal to $\max\{p-1,(p-1)^{-1}\}$ (see \cite{B0,B2}); this yields the optimality of the order above.
\end{proof}

\subsection{Noncommutative Stein and dual Doob inequalities}\label{ncDoob} Here is our next application. They can be respectively found in \cite{PX} and \cite{J}.

\begin{theorem}
Fix $p>1$ and a nonnegative integer $N$. Then for any sequence $(u_n)_{n=0}^N$ of elements of $\mathcal{M}$ (not necessarily adapted) we have
\begin{equation}\label{Stein}
 \left\|\left(\sum_{n=0}^N |\mathcal{E}_n(u_n)|^2\right)^{1/2}\right\|_{L^p(\mathcal{M})}\leq C_p \left\|\left(\sum_{n=0}^N |u_n|^2\right)^{1/2}  \right\|_{L^p(\mathcal{M})},
\end{equation}
where $C_p$ is of the order $O((p-1)^{-1})$ as $p\to 1$ and $O(p)$ as $p\to \infty$.
Furthermore, if $1\leq p<\infty$ and $u_n$ are positive, then
\begin{equation}\label{Doob}
 \left\|\sum_{n=0}^N \mathcal{E}_n(u_n)\right\|_{L^p(\mathcal{M})}\leq \widetilde{C}_p \left\|\sum_{n=0}^N u_n  \right\|_{L^p(\mathcal{M})},
\end{equation}
where $\widetilde{C}_p$ is of order $O(p^2)$ as $p\to \infty$. All the orders are the best possible.
\end{theorem}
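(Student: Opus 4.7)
The plan is to derive both inequalities from Theorem~\ref{main-theorem} by constructing suitable auxiliary triples $(x_N,y,z_N)$ in the enlarged algebra $\mathcal{N}=\mathbb{M}_{N+2}\bar\otimes\mathcal{M}$ equipped with the filtration $\mathcal{N}_n=\mathbb{M}_{N+2}\bar\otimes\mathcal{M}_n$, in the spirit of the reverse Burkholder-Gundy proof of Section~\ref{ABDG}. Stein's inequality will first be proved in the self-adjoint case (the general case following from the standard doubling $u_n\mapsto \bigl(\begin{smallmatrix}0 & u_n \\ u_n^* & 0\end{smallmatrix}\bigr)$ in $\mathbb{M}_2\bar\otimes\mathcal{M}$), for $p>2$; the range $1<p<2$ then follows from duality, since the operator $(u_n)\mapsto (\mathcal{E}_n(u_n))$ is self-dual under the natural pairing of vector-valued sequences. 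For dual Doob, the case $p=1$ reduces to the identity $\tau(\sum_n\mathcal{E}_n(u_n))=\tau(\sum_n u_n)$ valid for positive $u_n$, and the case $p\geq 2$ will follow from an operator square-root trick combined with an application of Theorem~\ref{main-theorem} at exponent $2p$.

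For Stein's inequality, I would take the self-adjoint martingale
\[
y_n=\sum_{k=0}^N(e_{0,k+1}+e_{k+1,0})\otimes \mathcal{E}_{\min(n,k)}(u_k),
\]
whose differences $dy_n=\sum_{k\geq n}(e_{0,k+1}+e_{k+1,0})\otimes (\mathcal{E}_n(u_k)-\mathcal{E}_{n-1}(u_k))$ satisfy $\mathcal{E}^{\mathcal{N}}_{n-1}(dy_n)=0$ by the tower property. The block decomposition of $y_N^2$ together with the spectral correspondence between $b^*b$ and $bb^*$ yields $\|y_N\|_{L^p(\mathcal{N})}=2^{1/p}\|(\sum_k|\mathcal{E}_k(u_k)|^2)^{1/2}\|_{L^p(\mathcal{M})}$. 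Pairing this with $\widetilde{x}_N=z_N=\sum_k(e_{0,k+1}+e_{k+1,0})\otimes u_k$, for which $\|\widetilde{x}_N\|_{L^p(\mathcal{N})}=2^{1/p}\|(\sum_k|u_k|^2)^{1/2}\|_{L^p(\mathcal{M})}$, the good-$\lambda$ testing conditions reduce, via the martingale identity
\[
\sum_{j\geq n}\mathcal{E}_n\bigl((\mathcal{E}_j(u_k)-\mathcal{E}_{j-1}(u_k))^2\bigr)=\mathcal{E}_n(\mathcal{E}_k(u_k)^2)-(\mathcal{E}_n(u_k))^2\leq \mathcal{E}_n(u_k^2),
\]
to the same type of computation used in the reverse direction of Burkholder-Gundy in Section~\ref{ABDG}. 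Theorem~\ref{main-theorem} then produces the $O(p)$ estimate.

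For dual Doob, the construction mirrors Stein but uses operator square roots in place of the original operators. Setting $a_k:=\mathcal{E}_k(u_k)^{1/2}$ and
\[
\tilde y_n=\sum_{k=0}^N(e_{0,k+1}+e_{k+1,0})\otimes \mathcal{E}_{\min(n,k)}(a_k),
\]
the identity $\sum_k a_k^2=\sum_n\mathcal{E}_n(u_n)$ yields $\|\tilde y_N\|_{L^{2p}(\mathcal{N})}=2^{1/(2p)}\|\sum_n\mathcal{E}_n(u_n)\|_{L^p(\mathcal{M})}^{1/2}$. Choosing $\widetilde{x}_N=z_N=\sum_k(e_{0,k+1}+e_{k+1,0})\otimes u_k^{1/2}$, whose $L^{2p}(\mathcal{N})$-norm equals $2^{1/(2p)}\|\sum_n u_n\|_{L^p(\mathcal{M})}^{1/2}$, and then applying Theorem~\ref{main-theorem} at exponent $2p$ will give $\|\tilde y_N\|_{L^{2p}(\mathcal{N})}\leq O(p)\,\|\widetilde{x}_N\|_{L^{2p}(\mathcal{N})}$; squaring then produces the dual Doob bound with constant of order $O(p^2)$. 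The main technical obstacle will be the verification of the good-$\lambda$ testing conditions in this setting: the key estimate is the operator Jensen inequality $\mathcal{E}_n(a_k)^2\leq \mathcal{E}_n(a_k^2)=\mathcal{E}_n(u_k)$ for $k\geq n$, which together with the crude bound $(\mathcal{E}_n(a_k)-\mathcal{E}_{n-1}(a_k))^2\leq 4\mathcal{E}_n(u_k)$ and summation over $k$ controls the bilinear form $\sum_n\sum_{k>n}\tau((R_{n-1}-R_n)d\tilde y_k R_{n-1}d\tilde y_k(R_{n-1}-R_n))$ required by testing condition (i); the off-diagonal matrix blocks are then handled analogously using the tracial property.
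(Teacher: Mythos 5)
There is a genuine gap in the verification of the good-$\lambda$ testing condition (i), and it affects both of your constructions in the same way. Because you build the martingale by conditioning in time, a single difference $dy_j=\sum_{k\geq j}(e_{0,k+1}+e_{k+1,0})\otimes d_j^{(k)}$ (with $d_j^{(k)}=\mathcal{E}_j(u_k)-\mathcal{E}_{j-1}(u_k)$) spreads over \emph{all} matrix slots $k\geq j$, so $dy_j^2$ contains, besides the $e_{0,0}$-block you control, the full lower-right block $\sum_{k,l\geq j}e_{k+1,l+1}\otimes (d_j^{(k)})^*d_j^{(l)}$ with nontrivial cross terms $k\neq l$. Your telescoping identity only handles the diagonal $k=l$. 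Summing over $j>n$ and using orthogonality of martingale differences, condition (i) would require the operator inequality
\begin{equation*}
\Big(\mathcal{E}_n\big(\mathcal{E}_{k}(u_k)^*\mathcal{E}_{l}(u_l)\big)\Big)_{k,l}\ \leq\ \Big(\mathcal{E}_n\big(u_k^*u_l\big)\Big)_{k,l}
\end{equation*}
(up to a harmless subtracted positive term), and this is \emph{false} in general: already in the commutative $2\times 2$ case with $\mathcal{E}_n=\tau(\cdot)I$, $\mathcal{E}_1=\mathrm{id}$, $u_1$ with $\tau(u_1)=0$ and $\mathcal{E}_0(u_1)=0$, the $(1,1)$-entry of the difference vanishes while the off-diagonal entry equals $\tau\big(u_0^*(I-\mathcal{E}_0)u_1\big)$, which can be made nonzero; a Hermitian matrix with a zero diagonal entry and a nonzero entry in that row is not positive. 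The same obstruction appears in your dual Doob construction, where the off-diagonal entries $\mathcal{E}_n\big(\mathcal{E}_k(u_k)^{1/2}\mathcal{E}_l(u_l)^{1/2}\big)$ bear no relation to $\mathcal{E}_n\big(u_k^{1/2}u_l^{1/2}\big)$; the "tracial property" does not remove these blocks, since the projections $R_{n-1}-R_n$ genuinely see the lower-right corner.

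The paper avoids this entirely by a different placement of the randomness: it proves dual Doob \emph{first}, with $dy_k=(e_{1,k+2}+e_{k+2,1})\otimes\e_k\otimes\mathcal{E}_k(u_k)^{1/2}$ on $\mathbb{M}_{N+2}\otimes L^\infty(\Omega)\otimes\mathcal{M}$, where $\e_k$ are Rademacher variables. Each $dy_k$ is a martingale difference occupying a single pair of slots, so $dy_k^2=(e_{1,1}+e_{k+2,k+2})\otimes 1\otimes\mathcal{E}_k(u_k)$ is block-diagonal with no cross terms, and condition (i) reduces to positivity of the $u_k$. Stein's inequality is then \emph{deduced} from dual Doob applied to $(|u_n|^2)$ via $|\mathcal{E}_n(u_n)|^2\leq\mathcal{E}_n(|u_n|^2)$, which squares the exponent and turns the $O(p^2)$ Doob constant into the $O(p)$ Stein constant. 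If you want to salvage your plan, you should restructure it along these lines rather than attempting a direct column-martingale realization of the Stein operator.
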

\begin{proof}
We start with dual Doob's inequality \eqref{Doob}.
As in the context of Burkholder-Gundy inequalities, we start with an appropriate modification of the von Neumann algebra which enables to fit the above setting into the framework of Section 3. Let $(\Omega,\F,\mathbb{P})$ be a classical probability space and let $\e_0$, $\e_1$, $\e_2$, $\ldots$, $\e_N$ be a sequence of independent Rademacher variables. Consider the algebra $\mathcal{N}=\mathbb{M}_{N+2}\otimes L^\infty(\Omega,\F,\mathbb{P})\otimes \mathcal{M}$. We equip $\mathcal{N}$ with the usual tensor trace $\nu$ and the filtration $(\mathcal{N}_n)_{n=0}^N=(\mathbb{M}_{N+2}\otimes L^\infty(\Omega,\F_n,\mathbb{P})\otimes \mathcal{M}_n)_{n=0}^N$, where $\F_n$ stands for the $\sigma$-field generated by the variables $\e_0$, $\e_1$, $\e_2$, $\ldots$, $\e_n$. Consider the operator
$$ x_N=z_N:=e_{1,1}\otimes 1\otimes \left(\sum_{k=0}^N u_k\right)^{1/2}+\sum_{k=0}^N e_{k+2,k+2}\otimes 1\otimes u_k^{1/2}$$
and the sequence $y=(y_n)_{n=0}^N$ uniquely determined by
$$  dy_k=(e_{1,k+2}+e_{k+2,1})\otimes \e_k \otimes \mathcal{E}_k(u_k)^{1/2}$$
for $k=0,\,1,\,2,\,\ldots,\,N$. Clearly, the sequence $y$ is a martingale with respect to $(\mathcal{N}_n)_{n=0}^N$. Let us verify that the triple $(x_N, y, z_N)$ satisfies the good-$\lambda$ testing conditions. Observe that
\begin{equation}\label{dy}
 dy_k^2=(e_{1,1}+e_{k+2,k+2})\otimes 1\otimes \mathcal{E}_k(u_k)=\overline{\mathcal{E}}_k\Big((e_{1,1}+e_{k+2,k+2})\otimes 1\otimes u_k\Big),
\end{equation}
where $\overline{\mathcal{E}}_k$ is the conditional expectation associated with the subalgebra  $\mathcal{N}_k$. Consequently,
\begin{align*}
&\nu \left(\sum_{n=0}^{N} \sum_{k=n+1}^N (R_{n-1}-R_{n})dy_kR_{n-1}dy_k(R_{n-1}-R_{n}) \right)\\
&\quad \leq\nu\left(\sum_{n=0}^{N} \sum_{k=n+1}^N(R_{n-1}-R_{n})dy_k^2(R_{n-1}-R_{n}) \right)\\
&\quad \leq\nu\left(\sum_{n=0}^{N} \sum_{k=n+1}^N(R_{n-1}-R_{n})\left((e_{1,1}+e_{k+2,k+2})\otimes 1\otimes u_k\right)\right),
\end{align*}
in the light of \eqref{dy}. We can split the latter expression into two parts:
\begin{align*}
&\nu\left(\sum_{n=0}^{N} \sum_{k=n+1}^N(R_{n-1}-R_{n})(e_{1,1}\otimes 1\otimes u_k)\right)\\
& \quad +\nu\left(\sum_{n=0}^{N} \sum_{k=n+1}^N(R_{n-1}-R_{n})(e_{k+2,k+2}\otimes 1\otimes u_k)\right)\\
&\leq\sum_{n=0}^{N} \nu \left((R_{n-1}-R_n)\left(e_{1,1}\otimes 1\otimes \sum_{k=0}^N u_k\right)\right)\\
&\quad +\nu \left(\sum_{k=0}^{N} \sum_{n=0}^{k-1}(R_{n-1}-R_{n})(e_{k+2,k+2}\otimes 1\otimes u_k)\right)\\
&\leq \nu\left((I-R_N)\left(e_{1,1}\otimes 1\otimes \sum_{k=0}^N u_k+\sum_{k=0}^N e_{k+2,k+2}\otimes 1\otimes u_k\right)\right)\\
&=\nu \Big((I-R_N)x_N^2\Big),
\end{align*}
which is the condition (i). Concerning the assumption (ii), we check that for any projection $P\in \mathcal{N}_k$,
$$ \nu\big(Pdy_k^2P\big)=\tau\Big(P((e_{1,1}+e_{k+2,k+2})\otimes 1\otimes u_k)P\Big)\leq \tau\big(Pz_N^2P\big),$$
as desired. Therefore, the inequality \eqref{main_moment} gives
\begin{equation}\label{shoulder}
 \|y_N\|_{L^p(\mathcal{N})}\leq \frac{12p}{\left(1-\left(1+\frac{1}{p}\right)^{2-p}\right)^{1/2}}\cdot 2^{1/2}\|x_N\|_{L^p(\mathcal{N})}
 \end{equation}
for any $p>2$. We verify directly that
$$ y_N^2\geq e_{1,1}\otimes 1\otimes \sum_{n=0}^N \mathcal{E}_n(u_n),$$
which gives $$||y_N||_{L^p(\mathcal{N})}\geq \left\|\sum_{n=0}^N \mathcal{E}_n(u_n)\right\|^{1/2}_{L^{p/2}(\mathcal{M})}.$$
Furthermore,
\begin{align*}
 \|x_N\|_{L^p(\mathcal{N})}&=\left(\left\|\sum_{n=0}^N u_n\right\|_{L^{p/2}(\mathcal{M})}^{p/2}+\sum_{n=0}^N \|u_n\|_{L^{p/2}(\mathcal{M})}^{p/2}\right)^{1/p}\leq 2^{1/p}\left\|\sum_{n=0}^N u_n\right\|_{L^{p/2}(\mathcal{M})}^{1/2},
\end{align*}
by interpolation. Combining these observations with \eqref{shoulder} gives the desired dual Doob's inequality \eqref{Doob} with the constant of the order $O(p^2)$ as $p\to \infty$.

It remains to handle \eqref{Stein}. For $p=2$ there is nothing to prove, the estimate holds with the constant $1$, since $|\mathcal{E}_n(u_n)|^2\leq \mathcal{E}_n(|u_n|^2)$. For $p>2$, we deduce the inequality \eqref{Stein} immediately from Doob's estimate (apply \eqref{Doob} to the positive sequence $(|u_n|^2)_{n=0}^N$ and use the estimate $|\mathcal{E}_n(u_n)|^2\leq \mathcal{E}_n(|u_n|^2)$ again). The case $p<2$ of \eqref{Stein} follows at once by duality. As in the case of martingale transforms, an easy interpolation argument  allows to remove the blow-up of the constant as $p\to 2$. For the optimality of the orders of the constants in \eqref{Stein} and \eqref{Doob}, we refer the reader to \cite{JX}.
\end{proof}

\def\rrrrrrrr{\subsection{Noncommutative Burkholder/Rosenthal inequalities}
Our final application concerns the noncommutative version of Burkholder/Rosenthal inequalities.

\begin{theorem}
Fix $p\geq 2$. Let $y=(y_n)_{n=0}^N$ be a finite martingale and let  $s_{p,d}(y)=\left(\sum_{n=0}^N |dy_n|^p\right)^{1/p}$ be the associated diagonal square function. Then  we have the estimate
$$ ||y_N||_{L^p(\mathcal{M})}\leq C_p\Big(||s_N(y)||_{L^p(\mathcal{M})}+||s_{p,d}(y)||_{L^p(\mathcal{M})}\Big),$$
with the constant $C_p$ of the order $O(p)$ as $p\to \infty$.
\end{theorem}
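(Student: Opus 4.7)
The natural plan is to reduce the claim directly to Theorem \ref{main-theorem} by exhibiting operators $x_N,z_N$ so that the triple $(x_N,y,z_N)$ satisfies the (strong) good-$\lambda$ testing conditions. Given the shape of the right-hand side in the statement, I would set
\[
x_N := s_N(y) = \Bigl(\sum_{n=0}^N \mathcal{E}_{n-1}(|dy_n|^2)\Bigr)^{1/2}, \qquad z_N := s_{p,d}(y) = \Bigl(\sum_{n=0}^N |dy_n|^p\Bigr)^{1/p}.
\]
For $p=2$ the estimate is trivial by martingale orthogonality ($\|y_N\|_2 = \|s_N(y)\|_2$), so throughout the argument I would assume $p>2$.

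Next I would verify the strong good-$\lambda$ testing conditions from Definition \ref{strong test condition def}. For the first inequality, using $\mathcal{E}_k\mathcal{E}_{n-1}=\mathcal{E}_{\min(k,n-1)}$ one computes
\[
\mathcal{E}_k(x_N^2) = \mathcal{E}_k(s_N(y)^2) = \sum_{n=0}^{k} \mathcal{E}_{n-1}(|dy_n|^2) + \sum_{n=k+1}^{N}\mathcal{E}_k(|dy_n|^2) \geq \sum_{m=k+1}^{N} \mathcal{E}_k(dy_m^2),
\]
which is exactly the first requirement. For the second inequality, since $p\geq 2$ the function $t\mapsto t^{2/p}$ is operator monotone, so $z_N^p = \sum_n |dy_n|^p \geq |dy_k|^p$ gives $z_N^2 \geq |dy_k|^2 = dy_k^2$. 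Since $dy_k\in\mathcal{M}_k$, applying $\mathcal{E}_k$ then yields $\mathcal{E}_k(z_N^2)\geq dy_k^2$, which is the second requirement. By the observation following Definition \ref{strong test condition def}, the original good-$\lambda$ testing conditions hold, and the same is true for the rescaled triple $(x_N/\mu, y/\mu, z_N/\mu)$ for every $\mu>0$ by homogeneity.

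With the conditions verified, Theorem \ref{main-theorem} applies and yields
\[
\|y_N\|_{L^p(\mathcal{M})} \leq \frac{12p}{\bigl(1-(1+1/p)^{2-p}\bigr)^{1/2}}\bigl(\|s_N(y)\|_{L^p(\mathcal{M})}^2 + \|s_{p,d}(y)\|_{L^p(\mathcal{M})}^2\bigr)^{1/2},
\]
which in turn is dominated by $\sqrt{2}$ times the right-hand side of the target inequality. Since $(1+1/p)^{2-p}\to e^{-1}$ as $p\to\infty$, the denominator is bounded away from $0$, so the resulting constant is $O(p)$, as required.

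The main potential obstacle is the verification of the second strong testing condition, which hinges on the operator monotonicity of $t\mapsto t^{2/p}$ and hence on the assumption $p\geq 2$; this is precisely the point where the noncommutative Burkholder/Rosenthal inequality restricts to the range $p\geq 2$. The verification of the first condition is essentially a bookkeeping computation that mirrors the standard identity $\mathcal{E}_k(s_N(y)^2)=s_k(y)^2 + \sum_{n>k}\mathcal{E}_k(dy_n^2)$. Everything else is a direct appeal to Theorem \ref{main-theorem}, so no new technical machinery beyond what the paper has already developed is needed.
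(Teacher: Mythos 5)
Your proposal is correct and follows essentially the same route as the paper: the same choices $x_N=s_N(y)$, $z_N=s_{p,d}(y)$, verification of the testing conditions (the paper checks the original conditions via the projections $R_n$, you check the strong ones, which the paper notes are sufficient), and a direct appeal to Theorem \ref{main-theorem} followed by $(a^2+b^2)^{1/2}\leq a+b$.
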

\begin{proof}
If $p=2$, then the inequality is obvious, so we assume that $p$ is strictly bigger than $2$. We set $x_N=s_N(y)$ and $z_N=s_{p,d}(y)$. Then the triple $(x_N, y, z_N)$ satisfies the good-$\lambda$ testing conditions. Indeed, by properties of conditional expectation,
\begin{align*}
&\sum_{n=0}^{N} \sum_{k=n+1}^N \tau\Big((R_{n-1}-R_{n})dy_kR_{n-1}dy_k(R_{n-1}-R_{n}) \Big)\\
&\leq\sum_{n=0}^{N} \sum_{k=n+1}^N \tau\Big((R_{n-1}-R_{n})dy_k^2(R_{n-1}-R_{n}) \Big)\\
&=\sum_{n=0}^{N} \sum_{k=n+1}^N \tau\Big((R_{n-1}-R_{n})\mathcal{E}_{k-1}(dy_k^2)(R_{n-1}-R_{n}) \Big)\\
&\leq\sum_{n=0}^{N} \sum_{k=n+1}^N \tau\Big((R_{n-1}-R_{n})s_N(y)^2 \Big)\\
&=\tau\Big((I-R_N)x_N^2\Big),
\end{align*}
which implies that the assumption (i) is satisfied. The condition (ii) is also satisfied: we have already verified it in \S\ref{ABDG}.  Consequently, the inequality \eqref{main_moment} gives, for $p>2$,
$$ ||y_N||_{L^p(\mathcal{M})}\leq \frac{12 p}{\left(1-(1+\frac{1}{p})^{2-p}\right)^{1/2}}\bigg(||s_N(y)||^2_{L^p(\mathcal{M})}+||s_{p,d}(y)||_{L^p(\mathcal{M})}^2\bigg)^{1/2}.$$
This immediately yields the desired claim.
\end{proof}}

\section{Noncommutative tangent sequences, improved Doob's inequality and a certain class of Schur multipliers}
This part of the paper contains further very interesting applications of the good-$\lambda$ method. Namely, we will study certain novel estimates for \emph{tangent} sequences, which later will be connected to an enhanced Doob's inequality and the construction of a certain class of Schur multipliers. Let us start with the formal definition of tangency. In the classical case, this concept was originally introduced by Kwapie\'n and Woyczy\'nski in \cite{KW}; we propose the following noncommutative extension.

\begin{definition}
Two adapted sequences $a=(a_n)_{n\geq 0}$ and $b=(b_n)_{n\geq 0}$ are said to be tangent if for any bounded Borel function $\varphi$ we have
\begin{equation}\label{tangent_definition}
 \mathcal{E}_{n-1}(\varphi(a_n))=\mathcal{E}_{n-1}(\varphi(b_n)),\qquad n=0,\,1,\,2,\,\ldots.
\end{equation}
\end{definition}

It is not difficult to see that if the sequences $a$, $b$ in the above definition consist of self-adjoint terms only, then \eqref{tangent_definition} is equivalent to saying that for any $\lambda\in \R$ we have the equality
$$ \mathcal{E}_{n-1}(I_{(\lambda,\infty)}(a_n))=\mathcal{E}_{n-1}(I_{(\lambda,\infty)}(b_n)),\qquad n=0,\,1,\,2,\,\ldots.$$
In the classical setting, the above condition amounts to saying that for any $n$, the conditional distributions of $a_n$ and $b_n$ with respect to the algebra $\mathcal{M}_{n-1}$ coincide.
However, we would like to emphasize here that our definition makes sense also in the case when $a$, $b$ contain some non-self-adjoint operators.

 Tangent sequences  have played an important role in the classical probability theory. They are in close connection to the so-called decoupling technique (i.e., comparison of the size of a given probabilistic object with its version in which some components have been replaced with independent copies), which has been exploited extensively in the literature. We shall mention here several relevant works. The paper of McConnell and Taqqu \cite{MT} contains the applications to multilinear forms and double stochastic integrals, while the results of Hitczenko \cite{Hi-1, Hi0, Hi2}, Kwapie\'n and Woyczy\'nski \cite{KW}, Os\k ekowski \cite{O0} and Zinn \cite{Z} concern estimates for tangent martingales and sums of positive random variables (with or without certain additional assumptions of the sequences). For further extensions, consult the papers \cite{A,dP,JN} on $U$-statistics, and the articles \cite{M} for applications of decoupling to Malliavin calculus.
 We should also mention here the papers of Cox, van Neerven, Veraar and Weis \cite{CV2,NVW,NW1, NW2} on stochastic integration in Banach spaces which also depend heavily on decoupling and tangent sequences. Finally, we would like to refer the interested reader to the extensive monographs of de la Pe\~na and Gin\'e \cite{DG} and Kwapie\'n and Woyczy\'nski \cite{KW2} for more on the subject.

Before we proceed to the description of our results, let us present two noncommutative examples concerning the tangency condition. We will also encounter an interesting example in the proof of Theorem \ref{no_tangent} below.

\begin{example}
Suppose that $(u_n)_{n\geq 0}$ is a predictable sequence of operators (i.e., for each $n$ the operator $u_n$ belongs to $\mathcal{M}_{n-1}$). Let $(\xi_n)_{n\geq 0}$, $(\tilde{\xi}_n)_{n\geq 0}$ be two tangent sequences of classical random variables on some probability space $(\Omega,\F,\mathbb{P})$, adapted to a given filtration $(\F_n)_{n\geq 0}$. Then the sequences $(\xi_n\otimes u_n)_{n\geq 0}$ and $(\tilde{\xi}_n\otimes u_n)_{n\geq 0}$ (considered on the tensor von Neumann algebra $\mathcal{N}=L^\infty(\Omega,\F,\mathbb{P})\bar{\otimes} \mathcal{M}$ equipped with the tensor trace and the filtration $(L^\infty(\Omega,\F_n,\mathbb{P})\bar{\otimes} \mathcal{M}_n)_{n\geq 0}$) are tangent.
\end{example}

\begin{example}\label{Schur}
Our next construction will have more ``noncommutative'' flavor. Let $N$ be a positive integer and let $\mathcal{M}=\mathbb{M}_N$ be the algebra of matrices of dimension $N\times N$ equipped with the usual trace $\tau=\operatorname*{Tr}$. We consider the following filtration $(\mathcal{M}_n)_{n=0}^N$, studied by Junge and Xu in \cite{JX2}: for each $n$,
$$\mathcal{M}_n=\{\mu I+A\,:\,\mu \in \mathbb{C},\,A\mbox{ is an $n\times n$ matrix, placed in the upper left corner}\}.$$
(For $n=0$, $\mathcal{M}_n$ is just the trivial algebra $\{\mu I\,:\,\mu\in \mathbb{C}\}$). The associated conditional expectations $(\mathcal{E}_n)_{n=-1}^N$ act as follows. We have $\mathcal{E}_{-1}=\mathcal{E}_0$ and, for each $k=0,\,1,\,\ldots,\,N$ and $a=(\alpha_{i,j})_{1\leq i,j\leq N}\in \mathcal{M}$, $\mathcal{E}_ka$ is a matrix whose upper-left corner of dimension $k\times k$ coincides with that of $a$, the remaining part of the main diagonal is occupied by the numbers $(\alpha_{k+1,k+1}+\alpha_{k+2,k+2}+\ldots+\alpha_{N,N})/(N-k)$, and all the other entries are zero.

 For any $k=1,\,2,\,\ldots,\,N$, let $A_k=(c_{i,j})_{1\leq i,j\leq k-1}$ be a Hermitian matrix of dimension $(k-1)\times (k-1)$ and, for $i\in \{1,2,\ldots,k\}$ and $j\in \{1,2,\ldots,k-1\}$, let $\lambda_{i,j}$ be some fixed complex numbers. Finally, let $\alpha_1$, $\ldots$, $\alpha_N$, $\beta_1$, $\beta_2$, $\ldots$, $\beta_N$ be given real numbers. Define $(a_k)_{k=0}^N$ and $(b_k)_{k=0}^N$ by $a_0=b_0=\beta_0 I$ and, for $k\geq 1$,
  $$
    a_k=\left[\begin{array}{cccccccccc}
 c_{11} & c_{12} & \ldots & c_{1,k-1} & \lambda_{k,1} & & & &\\
 c_{21} & c_{22} & \ldots & c_{2,k-1}      & \lambda_{k,2} & &  & &\\
  \ldots & \ldots & \ldots & \ldots & \ldots & & & &\\
     c_{k-1,1} & c_{k-1,2} & \ldots & c_{k-1,k-1} & \lambda_{k,k-1} & & & &\\
      \overline{\lambda}_{k,1}& \overline{\lambda}_{k,2}& \ldots& \overline{\lambda}_{k,k-1} & \alpha_k & & & &\\
     & & & & & \beta_k & & &\\
       & & & & & & \beta_k & &\\
       & & & & & & & \ldots &\\
       & & & & & & & & \beta_k
    \end{array}\right]
 $$
and
$$
    b_k=\left[\begin{array}{cccccccccc}
 c_{11} & c_{12} & \ldots & c_{1,k-1} & -\lambda_{k,1} & & & &\\
 c_{21} & c_{22} & \ldots & c_{2,k-1}      & -\lambda_{k,2} & &  & &\\
  \ldots & \ldots & \ldots & \ldots & \ldots & & & &\\
     c_{k-1,1} & c_{k-1,2} & \ldots & c_{k-1,k-1} & -\lambda_{k,k-1} & & & &\\
      -\overline{\lambda}_{k,1}& -\overline{\lambda}_{k,2}& \ldots& -\overline{\lambda}_{k,k-1} & \alpha_k & & & &\\
     & & & & & \beta_k & & &\\
       & & & & & & \beta_k & &\\
       & & & & & & & \ldots &\\
       & & & & & & & & \beta_k
    \end{array}\right]
 $$
 (here and below, we use the convention that all blank entries are zero).
 Then $(a_k)_{k=0}^N$ and $(b_k)_{k=0}^N$ are tangent. Indeed, it is easy to check by induction that for any positive integer $m$, the matrices $a_k^m$ and $b_k^m$ are of the above form (of course, with  some different choice of the parameters $A_k$, $\lambda_{i,j}$, $\alpha_k$ and $\beta_k$). This immediately gives the equality $\mathcal{E}_{k-1}(a_k^m)=\mathcal{E}_{k-1}(b_k^m)$ and hence also
 $$ \mathcal{E}_{k-1}(P(a_k))=\mathcal{E}_{k-1}(P(b_k)),$$
 for any polynomial $P$. Since $a_k$, $b_k$ are bounded, this yields the tangency condition.
\end{example}

There is a natural question whether the tangency assumption implies certain estimates for the sequences involved. Motivated by the commutative comparison results obtained by Burkholder \cite{B0,B2}, Hitczenko \cite{Hi-1,Hi0, Hi2}, Kwapie\'n and Woyczy\'nski \cite{KW} and \cite{KW2}, Os\k ekowski \cite{O0} and Zinn \cite{Z}, we will study this problem in two cases: when the sequences under investigation are noncommutative martingale difference sequences or adapted positive operators. Then we will present some interesting applications of the results obtained. We split the remaining part of this section into four parts.

\subsection{Martingale inequalities} Suppose that $x=(x_n)_{n\geq 0}$, $y=(y_n)_{n\geq 0}$ are self-adjoint martingales with tangent difference sequences. Consider the following two problems.

\smallskip

(A) Does there exist a universal constant $C$ such that the weak type estimate
$$ \tau(I_{[1,\infty)}(|y_N|))\leq C\|x_N\|_1,\qquad N=0,\,1,\,2,\,\ldots,$$
holds true?

(B) Given $1<p<\infty$, does there exist a finite constant $C_p$ depending only on $p$ for which we have the inequality
$$ \|y_N\|_p\leq C_p\|x_N\|_p,\qquad N=0,\,1,\,2,\,\ldots ?$$

\smallskip

In the classical setting, the answer to both (A) and (B) is positive: see e.g. Kwapie\'n and Woyczy\'nski \cite{KW,KW2} and Os\k ekowski \cite{O0}.
Our first result here is somewhat surprising and shows that in the noncommutative realm the answer to (A) and to a part of (B) is negative.

\begin{theorem}\label{no_tangent}
The weak-type inequality and the $L^p$ estimate ($1<p<2$) do not hold in general for tangent martingales.
\end{theorem}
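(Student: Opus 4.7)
The plan is to construct an explicit counterexample that exploits the comparative looseness of tangency in the noncommutative setting. The definition only requires $\mathcal{E}_{n-1}(\varphi(dx_n))=\mathcal{E}_{n-1}(\varphi(dy_n))$ for all bounded Borel $\varphi$, which pins down the conditional spectral profile of each increment but leaves considerable freedom for how $dx_n$ and $dy_n$ sit as off-diagonal operators inside $\mathcal{M}_n$. This should allow tangent pairs whose cumulative sums have very different noncommutative $L^p$ norms, particularly for $1<p<2$ where the row/column asymmetry of noncommutative $L^p$ is pronounced.

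Concretely, I would work with $\mathcal{M}=\mathbb{M}_N$ equipped with the Junge--Xu filtration used in Example~\ref{Schur}. That example already supplies the basic building block: two self-adjoint operators whose off-diagonal entries $\lambda_{k,j}$ in the $k$-th row/column differ only in sign form a tangent pair. I would iterate this construction into full multistep martingale difference sequences $(dx_k)$ and $(dy_k)$, most likely after tensoring with a classical Rademacher system in order to introduce enough phase flexibility without destroying tangency. The point is to arrange the off-diagonal signs so that $x_N=\sum dx_k$ enjoys substantial telescoping cancellation while $y_N=\sum dy_k$ builds up coherently. By letting $N\to\infty$ and rescaling the scale parameters appropriately, the ratios $\|y_N\|_{L^p(\mathcal{M})}/\|x_N\|_{L^p(\mathcal{M})}$ for $1<p<2$ and $\tau(I_{[1,\infty)}(|y_N|))/\|x_N\|_{L^1(\mathcal{M})}$ should blow up, yielding both of the required negative statements.

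The main obstacle will be to enforce simultaneously: (i) the martingale condition $\mathcal{E}_{n-1}(dx_n)=\mathcal{E}_{n-1}(dy_n)=0$; (ii) the full tangency relation $\mathcal{E}_{n-1}(dx_n^m)=\mathcal{E}_{n-1}(dy_n^m)$ for every $m\geq 1$, which is considerably more restrictive than equality of first moments; and (iii) a nonvanishing norm gap. Because condition (ii) forces the two increments to agree spectrally from the vantage point of the past, the norm discrepancy must be produced entirely by the genuinely noncommutative, off-diagonal directions, with the Example~\ref{Schur}-type sign swaps playing the central role. One expects the failure to degrade smoothly as $p\nearrow 2$, matching the sharp threshold $p=2$ emerging from the good-$\lambda$ analysis of Subsection~\ref{St4}, and after a homogeneity/rescaling argument the same construction should deliver the weak-type $(1,1)$ failure.
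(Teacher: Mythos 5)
Your proposal does not match the paper's construction, and the mechanism you describe has a genuine gap for the $L^p$ part. The tangent pairs produced by Example~\ref{Schur} are very rigid: tangency at step $k$ forces (via the condition $\mathcal{E}_{k-1}(a_k^2)=\mathcal{E}_{k-1}(b_k^2)$, which retains the upper-left $(k-1)\times(k-1)$ corner containing the products $\lambda_{k,i}\overline{\lambda}_{k,j}$) that all off-diagonal entries of the $k$-th increment be flipped by one and the same sign $\gamma_k\in\{-1,1\}$. But the map $a_N\mapsto b_N$ induced by such uniform sign flips is exactly a Schur multiplier of the ``reversed-$L$'' form treated in Subsection~5.4, and the paper proves (via the $p\geq 2$ tangent estimate plus the self-duality $\|m\|_{S^p\to S^p}=\|m\|_{S^{p'}\to S^{p'}}$) that every such multiplier is bounded on $S^p$ for all $1<p<\infty$, with norm $O((p-1)^{-1})$ uniformly in $N$. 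So no amount of clever sign arrangement within the Junge--Xu filtration can make $\|y_N\|_p/\|x_N\|_p$ blow up for a fixed $p\in(1,2)$; your central device is precisely the one the paper shows to be harmless in that range. (At best this route could address the weak-type failure, via the weak-$(1,1)$ unboundedness of the triangular truncation, but that is a separate argument you do not supply, and it cannot yield the $1<p<2$ statement.)

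The paper's counterexample exploits a different, genuinely noncommutative degree of freedom in the tangency condition: not the \emph{signs} but the \emph{positions} of the increments. On $\mathcal{M}=L^\infty(\Omega)\overline{\otimes}\mathbb{M}_{N+1}$ with the filtration moving only in the classical Rademacher direction, one takes $dx_n=\e_n\otimes(e_{1,n+1}+e_{n+1,1})$ (off-diagonal) and $dy_n=\e_n\otimes(e_{1,1}+e_{n+1,n+1})$ (diagonal). All even powers of the two increments literally coincide, and all odd powers have vanishing conditional expectation because of $\e_n$, so the sequences are tangent; yet $x_N$ is a rank-two operator with $|x_N|=\sqrt{N}(P_1+P_\e)$, hence $\|x_N\|_p=2^{1/p}\sqrt{N}$, while $y_N$ is diagonal with the entry $\e_1+\cdots+\e_N$ accumulating in the $(1,1)$ corner and $N$ further diagonal entries of modulus one, so $\|y_N\|_p\geq (N+1)^{1/p}$ and $\tau(I_{[1,\infty)}(|y_N|))=N+1$. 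Comparing $(N+1)^{1/p}$ with $\sqrt{N}$ kills the $L^p$ bound for $1<p<2$, and comparing $N+1$ with $2\sqrt{N}$ kills the weak-type bound. This diagonal-versus-off-diagonal dichotomy is the idea missing from your proposal.
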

\begin{proof}
Let $N$ be a large positive odd integer and assume that $\e_1$, $\e_2$, $\ldots$, $\e_N$ are independent Rademacher variables on some (classical) probability space $(\Omega,\F,\mathbb{P})$. Suppose further that for each $n\geq 0$, $\F_n$ is the $\sigma$-algebra generated by $\e_1$, $\e_2$, $\ldots$, $\e_n$ (with the convention $\F_0=\{\emptyset,\Omega\}$ and $\F_n=\F$ if $n>N$). Consider the algebra $\mathcal{M}=L^\infty(\Omega,\F,\mathbb{P})\overline{\otimes}\mathbb{M}_{N+1}$ equipped with the tensor product trace, where, as usual, $\mathbb{M}_{N+1}$ stands for the algebra of $(N+1)\times (N+1)$ matrices with the usual trace. Introduce the filtration  $\mathcal{M}_n=L^\infty(\Omega,\F_n,\mathbb{P})\overline{\otimes}\mathbb{M}_{N+1}$, $n=0,\,1,\,2,\,\ldots$. Finally, consider the sequences $dx=(dx_n)_{n\geq 0}$, $dy=(dy_n)_{n\geq 0}$ given by $dx_n=\e_n \otimes (e_{1,n+1}+ e_{n+1,1})$ and $dy_n=\e_n \otimes (e_{11}+e_{n+1,n+1})$, $n=1,\,2,\,\ldots,\,N$. For remaining $n$, we let $dx_n=dy_n=0$. It is obvious that $dx$ and $dy$ are martingale differences, and we will check now that the tangency condition is satisfied. To this end, observe that for each $n$, if $k$ is an even integer, then we have
$$ dx_n^k=dy_n^k=\begin{cases}
1\otimes (e_{11}+e_{n+1,n+1}) & \mbox{if }1\leq n\leq N,\\
0 & \mbox{otherwise},
\end{cases}$$
and hence $\mathcal{E}_{n-1}(dx_n^k)=\mathcal{E}_{n-1}(dy_n^k)$. On the other hand, if $k$ is odd, then we have $dx_n^k=dx_n$ and $dy_n^k=dy_n$, so $\mathcal{E}_{n-1}(dx_n^k)=0=\mathcal{E}_{n-1}(dy_n^k)$. Consequently, we see that for any polynomial $P$ we have
$$ \mathcal{E}_{n-1}(P(dx_n))=\mathcal{E}_{n-1}(P(dy_n)).$$
Since $dx_n$ and $dy_n$ are bounded, the above equality holds if $P$ is replaced by any Borel function $\varphi$, so $x$ and $y$ are tangent. Directly from the definition of $dx$ and $dy$, we compute that
$$ y_N=\left[\begin{array}{ccccc}
\e_1+\e_2+\ldots+\e_N &  &  &  & \\
 & \e_1 &  &  & \\
 &  & \e_2 &  & \\
 &  &  & \ldots & \\
 &  &  &  & \e_N
\end{array}\right],$$
which implies
$$ |y_N|=\left[\begin{array}{ccccc}
|\e_1+\e_2+\ldots+\e_N| &  &  &  & \\
 & 1 &  &  & \\
 &  & 1 &  & \\
 &  &  & \ldots & \\
 &  &  &  & 1
\end{array}\right]$$
and $\tau(I_{[1,\infty)}(|y_N|))=N+1$ (here we use the assumption that $N$ is odd: this guarantees that the entry in the upper-left corner of $|y_N|$ is at least $1$). On the other hand, we have
$$ x_N=\left[\begin{array}{ccccc}
 & \e_1 & \e_2 & \ldots & \e_N\\
\e_1 &  &  &  & \\
\e_2 &  &  &  & \\
\ldots & &  &  & \\
\e_N &  & & &
\end{array}\right].$$
To derive the trace of $|x_N|$, note that
$$ x_N^2=\left[\begin{array}{ccccc}
N &  &  &  & \\
 & \e_1^2 & \e_1\e_2 & \ldots & \e_1\e_N\\
 & \e_2\e_1 & \e_2^2 & \ldots & \e_2\e_N\\
 & \ldots & \ldots & \ldots & \ldots\\
 & \e_N\e_1 & \e_N\e_2 & \ldots & \e_N^2
\end{array}\right]=N(P_1+P_\e),$$
where $P_1$, $P_\e$ are the projections onto the one-dimensional spaces spanned by $(1,0,0,\ldots,0)$ and $(0,\e_1,\e_2,\ldots,\e_N)$, respectively. These spaces are orthogonal, so $|x_N|=\sqrt{N}(P_1+P_\e)$ and hence $\tau(|x_N|)=2\sqrt{N}$. We have thus obtained that
$$ \frac{\tau(I_{[1,\infty)}(|y_N|))}{\tau(|x_N|)}=\frac{N+1}{2\sqrt{N}},$$
so the weak-type (1,1) estimate cannot hold with any finite universal constant. Furthermore,  we have $ ||y_N||_{p}\geq (N+1)^{1/p}$ and $||x_N||_p=2^{1/p}\sqrt{N}$, so the $L^p$ estimate does not hold for $1<p<2$ as well.
\end{proof}

However, we will prove that in the range $2\leq p<\infty$ the $L^p$-inequality for tangent sequences does hold true. Actually, we will show a much stronger statement, which is of independent interest and is motivated by the following result obtained by Os\k ekowski in \cite{O0}. Suppose that $p\geq 2$ is  a fixed number and $x=(x_n)_{n\geq 0}$, $y=(y_n)_{n\geq 0}$ are \emph{commutative} $L^p$-bounded martingales satisfying
\begin{equation}\label{assumption_osekowski}
 \mathcal{E}_{n-1}(dy_n^2)\leq \mathcal{E}_{n-1}(dx_n^2)\quad \mbox{ and }\quad \mathcal{E}_{n-1}(|dy_n|^p)\leq \mathcal{E}_{n-1}(|dx_n|^p)
\end{equation}
for each $n$. Then we have the moment estimate
$$\|y_N\|_p\leq 3p\|x_N\|_p,\qquad N=0,\,1,\,2,\,\ldots.$$
The good-$\lambda$ approach developed in the previous sections will enable us to establish the following stronger version of this result in the noncommutative setting.

\begin{theorem}\label{stronger_theorem}
Let $2\leq p<\infty$. Suppose that $x=(x_n)_{n\geq 0}$, $y=(y_n)_{n\geq 0}$ are self-adjoint, $L^p$-bounded martingales such that for some $\kappa\geq 1$,
\begin{equation}\label{domination}
 \mathcal{E}_{n-1}(dy_n^2)\leq \mathcal{E}_{n-1}(dx_n^2)\quad \mbox{ and }\quad \|dy_n\|_p\leq \kappa\|dx_n\|_p
\end{equation}
for all $n=0,\,1,\,2,\,\ldots$. Then
\begin{equation}\label{boundxy}
 ||y_N||_p\leq C_p \kappa||x_N||_p,
\end{equation}
for some constant $C_p$ of order $O(p)$ as $p\to \infty$. The order is already the best possible for tangent martingales in the commutative case.
\end{theorem}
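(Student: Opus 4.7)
The plan is to apply Theorem~\ref{main-theorem} to the triple $(x_N, y, z_N)$ in which the first operator is the martingale value $x_N$ itself (not the square function $S_N(x)$) and
$$
  z_N := \left(\sum_{k=0}^{N} |dy_k|^p\right)^{1/p}.
$$
The crucial point is this choice of the ``first coordinate'': the naive candidate $X_N = S_N(x)$ would also satisfy the good-$\lambda$ testing conditions, but only at the cost of an extra factor from Burkholder--Gundy, yielding the suboptimal order $O(p^2)$. For $p=2$ the estimate is immediate: orthogonality of self-adjoint martingale differences together with the first condition in \eqref{domination} gives $\|y_N\|_2^2 = \sum_n \tau(dy_n^2) \leq \sum_n \tau(dx_n^2) = \|x_N\|_2^2$. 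So we assume $p > 2$ from now on.

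By the Remark following Theorem~\ref{main-theorem}, it is enough to verify the good-$\lambda$ testing conditions for $\mu = 1$, and by the discussion after Definition~\ref{strong test condition def} it suffices to check the \emph{strong} testing conditions \eqref{strong test condition}. For the first, fix $k$ and $m \geq k+1$; the tower property combined with the first hypothesis in \eqref{domination} gives
$$
  \mathcal{E}_k(dy_m^2) = \mathcal{E}_k \mathcal{E}_{m-1}(dy_m^2) \leq \mathcal{E}_k \mathcal{E}_{m-1}(dx_m^2) = \mathcal{E}_k(dx_m^2).
$$
Summing over $m$ and using the martingale property of $x$ (under which cross-terms vanish inside $\mathcal{E}_k$),
$$
  \sum_{m=k+1}^{N} \mathcal{E}_k(dy_m^2) \leq \sum_{m=k+1}^{N} \mathcal{E}_k(dx_m^2) = \mathcal{E}_k\bigl((x_N - x_k)^2\bigr) = \mathcal{E}_k(x_N^2) - x_k^2 \leq \mathcal{E}_k(x_N^2).
$$
For the second condition, operator monotonicity of $t \mapsto t^{2/p}$ (valid since $2/p \leq 1$) gives $z_N^2 \geq (|dy_k|^p)^{2/p} = dy_k^2$; since $dy_k^2 \in \mathcal{M}_k$ we deduce $dy_k^2 = \mathcal{E}_k(dy_k^2) \leq \mathcal{E}_k(z_N^2)$.

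Theorem~\ref{main-theorem} now yields
$$
  \|y_N\|_p \leq \frac{12 p}{\bigl(1-(1+1/p)^{2-p}\bigr)^{1/2}}\bigl(\|x_N\|_p^2 + \|z_N\|_p^2\bigr)^{1/2}.
$$
It remains to bound $\|z_N\|_p$. By construction and the second hypothesis in \eqref{domination},
$$
  \|z_N\|_p^p = \sum_{k=0}^N \|dy_k\|_p^p \leq \kappa^p \sum_{k=0}^N \|dx_k\|_p^p,
$$
and the Junge--Xu interpolation bound \eqref{boundb} applied to $x$ gives $\sum_{k=0}^N \|dx_k\|_p^p \leq 2^{p-2}\|x_N\|_p^p$, hence $\|z_N\|_p \leq 2\kappa \|x_N\|_p$. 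Plugging this in yields $\|y_N\|_p \leq C_p \kappa \|x_N\|_p$ with $C_p = O(p)$ as $p \to \infty$; optimality is inherited from the classical commutative case cited in the statement.

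The principal obstacle is the identification of the correct first coordinate of the triple: one must recognize that the martingale property allows $\sum_{m>k}\mathcal{E}_k(dx_m^2) = \mathcal{E}_k((x_N - x_k)^2) \leq \mathcal{E}_k(x_N^2)$, so that one may take $X_N = x_N$ directly rather than $S_N(x)$. Once this observation is in place, both the verification of the strong good-$\lambda$ testing conditions and the control of $\|z_N\|_p$ via Junge--Xu's bound on diagonal martingale differences are essentially mechanical, and the heavy machinery is entirely absorbed into Theorem~\ref{main-theorem}.
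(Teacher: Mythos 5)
Your proof is correct and follows essentially the same route as the paper: the same triple $(x_N,y,z_N)$ with $z_N=(\sum_k|dy_k|^p)^{1/p}$, an appeal to Theorem \ref{main-theorem}, and the interpolation bound \eqref{boundb} to control $\|z_N\|_p$. The only (harmless) variations are that you verify the strong good-$\lambda$ testing conditions rather than the original ones, and that you treat $p=2$ explicitly.
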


Before we turn to the proof, we make three important observations.

\begin{remark}
(i) The inequality \eqref{boundxy}, with the worse constant of order $O(p^2)$ as $p\to \infty$, can be immediately deduced from the noncommutative version of Burkholder/Rosenthal inequality (see \cite{JX}, \cite{Z}): indeed, directly from \eqref{domination} we infer that
\begin{align*}
 \|y_N\|_p&\lesssim p\left(\|s_N(y)\|_p+\left(\sum_{n=0}^N \|dy_n\|_p^p\right)^{1/p}\right)\\
 &\leq p\left(\|s_N(x)\|_p+\kappa\left(\sum_{n=0}^N \|dx_n\|_p^p\right)^{1/p}\right)\lesssim p^2\kappa\|x_N\|_p.
\end{align*}
Here $(s_n(x))_{n\geq 0}$ stands for the conditional square function of $x$, while the symbol ``$A\lesssim B$'' means that the ratio $A/B$ is bounded from above by a universal constant. It is worth stressing that this type of argument cannot yield the sharp version of \eqref{boundxy} (i.e., with the linear growth of $C_p$ with respect to $p$) even in the commutative case. Indeed, exploiting the best orders of constants in Burkholder/Rosenthal inequality (see \cite{Hi}), one gets the non-optimal order $O(p\sqrt{p}/\log p)$ above.

(ii) By Theorem \ref{no_tangent}, the above result cannot hold in the range $1<p<2$.

(iii) The statement above is indeed stronger than the aforementioned result from \cite{O0}, since in \eqref{domination} we require only the domination of $p$-th norms of $dx$ over the $p$-th norm of $dy$ (instead of the estimates on ``conditional $p$-th moments'' as in \eqref{assumption_osekowski}). The argument used in the proof of our next result, Theorem \ref{positive_theorem}, will depend heavily on this weaker assumption (see \eqref{passage}).
\end{remark}

\begin{proof}[Proof of Theorem \ref{stronger_theorem}]
Set $z_N=\left(\sum_{k=0}^N |dy_k|^p\right)^{1/p}$. Then $(x_N,y,z_N)$ satisfy the good-$\lambda$ testing condition: this is almost word-by-word repetition of the arguments appearing in Subsections \ref{ABDG} and \ref{transforms} above (note that to check (i), we only need the martingale property of $x$, $y$ and the condition $\mathcal{E}_{n-1}(dy_n^2)\leq \mathcal{E}_{n-1}(dx_n^2)$ for each $n$). Consequently, by Theorem \ref{main-theorem}, we obtain
\begin{equation}\label{boundy}
  ||y_N||_{p}\leq \frac{12 p}{\left(1-\left(1+\frac{1}{p}\right)^{2-p}\right)^{1/2}}\left(||{x}_N||_{p}^2+||z_N||^2_{p}\right)^{1/2}.
 \end{equation}
It remains to observe that the assumption $\|dy_n\|_p\leq \|dx_n\|_p$, $n=0,\,1,\,2,\,\ldots$, combined with interpolating estimate \eqref{boundb}, implies
$$ ||z_N||_p=\left(\sum_{k=0}^N \tau(|dy_k|^p)\right)^{1/p}\leq \left(\sum_{k=0}^N \tau(|dx_k|^p)\right)^{1/p}\leq 2^{1-2/p}\|x_N\|_p.$$
Plugging this into \eqref{boundy} gives the desired inequality. The order $C_p=O(p)$ is already optimal for tangent martingales in the commutative setting, which can be extracted from the  examples of Burkholder \cite{B0}. Namely, for any $c<p-1$ there is an integer $N$ with the following property. If $\e_0$, $\e_1$, $\e_2$, $\ldots$, $\e_N$ are independent Rademacher variables, then there exists a sequence $(v_n)_{n=0}^N$ which is predictable with respect to the filtration generated by $(\e_n)_{n=0}^N$, such that
$$ \left\|\sum_{n=0}^N (-1)^n\e_n v_n\right\|_p>c\left\|\sum_{n=0}^N \e_nv_n\right\|_p.$$
It remains to note that the sequences $(\e_nv_n)_{n=0}^N$ and $((-1)^n\e_nv_n)_{n=0}^N$ are tangent.
\end{proof}

\subsection{Inequalities for sums of positive operators} The next statement compares the sizes of tangent sums of positive operators. The commutative version of this result was obtained by Hitczenko in \cite{Hi0}.

\begin{theorem}\label{positive_theorem}
Let $1\leq p<\infty$ and suppose that $u=(u_n)_{n\geq 0}$, $v=(v_n)_{n\geq 0}$ are tangent sequences of positive operators. Then
\begin{equation}\label{positive}
 \left\|\sum_{n=0}^N v_n\right\|_p\leq C_p\left\|\sum_{n=0}^N u_n\right\|_p
\end{equation}
for some constant $C_p$ of order $O(p)$ as $p\to \infty$. The order is optimal, as it is already the best in the classical case.
\end{theorem}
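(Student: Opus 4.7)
My plan is to treat $p=1$ directly via tangency, apply Theorem~\ref{main-theorem} for $p\geq 2$ via an enlarged-algebra construction parallel to that used for the dual Doob inequality, and interpolate to cover $1<p<2$. The $p=1$ case is immediate: $\tau(v_n)=\tau(\mathcal{E}_{n-1}(v_n))=\tau(\mathcal{E}_{n-1}(u_n))=\tau(u_n)$, so $\|\sum_n v_n\|_1=\|\sum_n u_n\|_1$ with constant $1$, which also explains the sharpness at the low end.

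For $p\geq 2$, I would imitate the Rademacher construction from the proof of dual Doob's inequality in Subsection~\ref{ncDoob}. Pass to $\mathcal{N}=\mathbb{M}_{N+2}\otimes L^\infty(\Omega,\mathbb{P})\otimes\mathcal{M}$ equipped with the filtration generated by an independent Rademacher sequence $(\varepsilon_k)$ together with $(\mathcal{M}_k)$, and consider the self-adjoint martingale
$$ dy_k=(e_{1,k+2}+e_{k+2,1})\otimes\varepsilon_k\otimes v_k^{1/2}. $$
A direct expansion, exactly as in Subsection~\ref{ncDoob}, yields $y_N^2\geq e_{1,1}\otimes 1\otimes \sum_k v_k$, hence $\|y_N\|_{2p}^{2}\geq \|\sum_k v_k\|_p$. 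I then take
$$ x_N=z_N=e_{1,1}\otimes 1\otimes\Big(\sum_k u_k\Big)^{1/2}+\sum_k e_{k+2,k+2}\otimes 1\otimes u_k^{1/2}, $$
expressed purely in terms of the dominating sequence $(u_k)$. Tangency is the critical ingredient here: for $m>k$ one has $\mathcal{E}_k(v_m)=\mathcal{E}_k(\mathcal{E}_{m-1}(v_m))=\mathcal{E}_k(u_m)$, which lets me recast $\sum_{m>k}\mathcal{E}_k(dy_m^2)$ in terms of $(u_m)$ alone and verify condition~(i) of the good-$\lambda$ testing conditions exactly as in Subsection~\ref{ncDoob}. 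Condition~(ii) is the delicate point: since $v_k\in\mathcal{M}_k$ while tangency only controls $\mathcal{E}_{k-1}$-behavior, I plan to check it in its weaker (non-strong) form, by a projection-by-projection argument coupling tangency with Kadison-Schwarz.

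Once both testing conditions are in place, Theorem~\ref{main-theorem} applied at exponent $2p$ gives $\|y_N\|_{2p}\leq O(p)(\|x_N\|_{2p}^2+\|z_N\|_{2p}^2)^{1/2}$, and the positivity inequality $\sum_k\tau(u_k^p)\leq\tau((\sum_k u_k)^p)$ bounds $\|x_N\|_{2p}$ by a constant multiple of $\|\sum_k u_k\|_p^{1/2}$. Combining these yields the desired estimate for $p\geq 2$; the range $1<p<2$ then follows by interpolating with the $p=1$ case.

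The main obstacle is condition~(ii): the operators $v_k$ and $u_k$ are not pointwise comparable, only conditionally tangent, so a careful trace-weighted domination is required. A secondary concern is matching the claimed optimal order $O(p)$: the $L^{2p}\to L^p$ detour produces only $O(p^2)$, so achieving $O(p)$ will likely require replacing the enlarged-algebra construction by the self-martingale $y_n=\mathcal{E}_n(\sum_k v_k)$ inside $\mathcal{M}$ itself, so that $y_N=\sum_k v_k$ and Theorem~\ref{main-theorem} can be invoked directly at exponent $p$; verification of the testing conditions in that setting would reduce to bounding the conditional variance $\mathcal{E}_k(V_N^2)-y_k^2$, which can be expressed, via tangency on diagonal moments $\mathcal{E}_k(v_j^2)=\mathcal{E}_k(u_j^2)$, in terms of $(u_j)$ alone.
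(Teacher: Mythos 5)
Your $p=1$ observation is correct, and your verification of testing condition (i) via $\mathcal{E}_k(v_m)=\mathcal{E}_k(\mathcal{E}_{m-1}(v_m))=\mathcal{E}_k(u_m)$ for $m>k$ is sound. But the step you yourself flag as delicate --- condition (ii) with $z_N$ ``expressed purely in terms of the dominating sequence $(u_k)$'' --- is not merely delicate, it is false. Condition (ii) requires $\tau(Pdy_k^2P)\leq\tau(Pz_N^2P)$ for \emph{every} projection $P\in\mathcal{N}_k$, i.e.\ at level $k$, where $dy_k^2$ contains $v_k$ itself (not $\mathcal{E}_{k-1}(v_k)$, in contrast with the dual Doob construction you are imitating, where $dy_k$ is built from $\mathcal{E}_k(u_k)^{1/2}$). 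Since $v_k\in\mathcal{M}_k$, you may take $P$ to be a spectral projection of $v_k$; tangency only equates conditional expectations onto $\mathcal{M}_{k-1}$ and gives no control of $\tau(Pv_kP)$ against $\tau(Pu_kP)$ for such $P$ (already in the commutative case one can make $v_k$ large on an $\mathcal{F}_k$-measurable set where $u_k$ is small). No Kadison--Schwarz argument repairs this; and replacing the $u_k$'s in $z_N$ by $v_k$'s makes the $e_{1,1}$-block of $\|z_N\|_{2p}^2$ equal to $\|\sum_k v_k\|_p$, which is circular. Your fallback for recovering $O(p)$ --- the martingale $y_n=\mathcal{E}_n(\sum_k v_k)$ --- also fails: the conditional variance $\mathcal{E}_k(V_N^2)-y_k^2$ contains the cross terms $\mathcal{E}_k(v_iv_j+v_jv_i)$ with $i\neq j$, and tangency, being a condition on each $v_n$ separately, says nothing about these. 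Finally, the interpolation step for $1<p<2$ is not available as stated: $v$ is not obtained from $u$ by any linear operator, so there is nothing to interpolate.

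The paper's actual argument for $p\geq 2$ sidesteps all of this. Write $\sum_n v_n=\sum_n u_n+\sum_n(v_n-u_n)$ and observe that tangency makes $(v_n-u_n)_n$ a martingale difference sequence. Set $dy_n=1\otimes(v_n-u_n)$ and $dx_n=2\e_n\otimes u_n$ on $L^\infty(\Omega)\overline{\otimes}\mathcal{M}$. Then $\mathcal{E}_{n-1}((v_n-u_n)^2)\leq 4\mathcal{E}_{n-1}(u_n^2)$, because the difference equals $\mathcal{E}_{n-1}((u_n+v_n)^2)\geq 0$ after using $\mathcal{E}_{n-1}(v_n^2)=\mathcal{E}_{n-1}(u_n^2)$, and $\|v_n-u_n\|_p\leq 2\|u_n\|_p=\|dx_n\|_p$ by the triangle inequality. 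This is exactly the situation covered by Theorem \ref{stronger_theorem} --- and is the reason that theorem is formulated with the \emph{norm} hypothesis $\|dy_n\|_p\leq\kappa\|dx_n\|_p$ rather than a conditional $p$-th moment hypothesis --- yielding the $O(p)$ constant directly at exponent $p$, with $\|\sum_n\e_n\otimes u_n\|_p\leq\|\sum_n u_n\|_p$ from positivity. Your Rademacher--square-root construction (with $dx_n=\e_n\otimes u_n^{1/2}$, $dy_n=\e_n\otimes v_n^{1/2}$, and Burkholder--Gundy at exponent $2p$) is essentially what the paper uses, but only for the range $1\leq p<2$, precisely because the $L^{2p}\to L^p$ detour inflates the constant for large $p$.
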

\begin{proof}
Let $\e_0$, $\e_1$, $\e_2$, $\ldots$ be independent Rademacher variables on some probability space $(\Omega,\F,\mathbb{P})$.
We will work with the extended algebra $\mathcal{N}=L^\infty(\Omega,\F,\mathbb{P})\overline{\otimes} \mathcal{M}$, equipped with the tensor trace and the filtration $(L^\infty(\Omega,\F_n,\mathbb{P})\overline\otimes \mathcal{M}_n)_{n\geq 0}$ (where $(\F_n)_{n\geq 0}$ is the filtration generated by the Rademacher sequence). We will consider the cases $1\leq p<2$ and $p\geq 2$ separately. In the first case, we consider the martingale difference sequences
$$ dx_n=\e_n\otimes u_n^{1/2},\quad dy_n=\e_n\otimes v_n^{1/2}$$
on the extended algebra. By Burkholder-Gundy inequality, we get
\begin{equation}\label{firs}
 \left\|\sum_{n=0}^N v_n\right\|_{L^p(\mathcal{M})}=\|S_N(y)\|_{L^{2p}(\mathcal{N})}^2\leq C_p'\|y_N\|_{L^{2p}(\mathcal{N})}^2
\end{equation}
for some constant $C_p'$.
Observe that for each $n$ we have, by tangency,
$$ \mathcal{E}_{n-1}^\mathcal{N}(dy_n^2)=1\otimes \mathcal{E}_{n-1}^\mathcal{M}(v_n)=1\otimes \mathcal{E}_{n-1}^\mathcal{M}(u_n)=\mathcal{E}_{n-1}^\mathcal{N}(dx_n^2)$$
and
$$
 \|dy_n\|_{L^{2p}(\mathcal{N})}=\|v_n\|_{L^p(\mathcal{M})}^{1/2}=\|u_n\|_{L^p(\mathcal{M})}^{1/2}=\|dx_n\|_{L^{2p}(\mathcal{N})}.
$$
Therefore, by Theorem \ref{stronger_theorem}, we get $\|y_N\|_{L^{2p}(\mathcal{N})}\leq C_p\|x_N\|_{L^{2p}(\mathcal{N})}$. Applying Burkholder-Gundy inequality again, we get
$$ \|x_N\|_{L^{2p}(\mathcal{N})}\leq C_p''\|S_N(x)\|_{L^{2p}(\mathcal{N})}=C_p''\left\|\sum_{n=0}^N u_n\right\|_{L^p(\mathcal{M})}^{1/2},$$
for some $C_p''$,
which combined with \eqref{firs} gives the claim. Clearly, this proof works for all $p$, but it yields too large constant when $p$ approaches infinity.
The following argument produces the correct order. Suppose that $p\geq 2$.
We start with the observation that by the triangle inequality,
$$ \left\|\sum_{n=0}^N v_n\right\|_p\leq \left\|\sum_{n=0}^N u_n\right\|_p+\left\|\sum_{n=0}^N (v_n-u_n)\right\|_p,$$
and hence it is enough to provide an appropriate upper bound for the second term on the right. Note that by tangency, $(v_n-u_n)_{n\geq 0}$ is a martingale difference sequence. Let $dx_n=2\e_n\otimes u_n$ and $dy_n=1\otimes (v_n-u_n)$, $n=0,\,1,\,2,\,\ldots$; these operators are martingale differences on $\mathcal{N}$. We have the estimate
$$ \mathcal{E}_{n-1}^\mathcal{M}(v_n-u_n)^2=\mathcal{E}_{n-1}^\mathcal{M}v_n^2-\mathcal{E}_{n-1}^\mathcal{M}v_nu_n-\mathcal{E}^\mathcal{M}_{n-1}u_nv_n+\mathcal{E}^\mathcal{M}_{n-1}u_n^2\leq 4\mathcal{E}^\mathcal{M}_{n-1}u_n^2,$$
since, by tangency, it is equivalent to $\mathcal{E}_{n-1}^\mathcal{M}(u_n+v_n)^2\geq 0$. Consequently,
$$ \mathcal{E}_{n-1}^\mathcal{N}(dy_n^2)=1\otimes \mathcal{E}^\mathcal{M}_{n-1}(v_n-u_n)^2\leq \mathcal{E}^\mathcal{N}_{n-1}({dx_n^2}).$$
Furthermore, exploiting the tangency and the triangle inequality again,
\begin{equation}\label{passage}
 \|{dy_n}\|_{L^p(\mathcal{N})}=\|v_n-u_n\|_{L^p(\mathcal{M})}\leq 2\|u_n\|_{L^p(\mathcal{M})}=\|{dx_n}\|_{L^p(\mathcal{N})}.
\end{equation}
Therefore Theorem \ref{stronger_theorem} yields
$$ \left\|\sum_{n=0}^N (v_n-u_n)\right\|_{L^p(\mathcal{M})}=\|{y_N}\|_{L^p(\mathcal{N})}\leq C_p\|x_N\|_{L^p(\mathcal{N})}=C_p\left\|\sum_{n=0}^N \e_n\otimes u_n\right\|_{L^p(\mathcal{N})}.$$
It remains to note that
$$ -\sum_{n=0}^N 1\otimes u_n\leq \sum_{n=0}^N \e_n\otimes u_n\leq \sum_{n=0}^N 1\otimes u_n,$$
which gives $\left\|\sum_{n=0}^N \e_n\otimes u_n\right\|_{L^p(\mathcal{N})}\leq \left\|\sum_{n=0}^N u_n\right\|_{L^p(\mathcal{M})}$. For the sharpness of the linear order in the classical setting, consult \cite{Hi0}.
\end{proof}

\begin{remark}\label{weak_remark}
A careful inspection shows that  the tangency assumption can be relaxed in the above theorem. Namely, if $p\geq 2$, then the inequality \eqref{positive} holds true if the sequence $(u_n)_{n\geq 0}$ consists of positive operators and $(v_n)_{n\geq 0}$ consist of self-adjoint operators satisfying $ \mathcal{E}_{n-1}(v_n)=\mathcal{E}_{n-1}(u_n)$ (which guarantees that $(v_n-u_n)_{n\geq 0}$ is a martingale difference sequence) and $\mathcal{E}_{n-1}(v_n^2)\leq \mathcal{E}_{n-1}(u_n^2)$,  $\|v_n\|_p\leq \kappa \|u_n\|_p$
for each $n=0,\,1,\,2,\,\ldots$. (Then the constant changes from $C_p$ to $C_p\kappa$).
\end{remark}

\subsection{A refinement of Doob's inequality} The purpose of this short subsection is to present the following striking version of noncommutative Doob's inequality.

\begin{theorem}\label{better_Doob}
Suppose that $(u_n)_{n=0}^N$ is an adapted sequence of positive operators. Then for $1\leq p<\infty$ we have
\begin{equation}\label{convexlemma}
 \left\|\sum_{n=0}^N \mathcal{E}_{n-1}(u_n)\right\|_p\leq c_p \left\|\sum_{n=0}^N u_n\right\|_p
\end{equation}
with $c_p$ of order $O(p)$ as $p\to \infty$. The order is optimal, as it is already the best possible in the commutative setting.
\end{theorem}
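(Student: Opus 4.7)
The plan is to derive Theorem \ref{better_Doob} as a direct consequence of Theorem \ref{positive_theorem} in the relaxed form of Remark \ref{weak_remark}, with the natural choice $v_n := \mathcal{E}_{n-1}(u_n)$. Since $(u_n)_{n=0}^N$ is adapted and positive, each $v_n$ is self-adjoint and lies in $\mathcal{M}_{n-1}$. What remains is to verify the three hypotheses of Remark \ref{weak_remark} with $\kappa = 1$: first, $\mathcal{E}_{n-1}(v_n) = v_n = \mathcal{E}_{n-1}(u_n)$ holds automatically because $v_n$ is $\mathcal{M}_{n-1}$-measurable; second, $\mathcal{E}_{n-1}(v_n^2) = v_n^2 = (\mathcal{E}_{n-1}u_n)^2 \leq \mathcal{E}_{n-1}(u_n^2)$, which is the Kadison--Schwarz inequality for conditional expectations; third, $\|v_n\|_p = \|\mathcal{E}_{n-1}(u_n)\|_p \leq \|u_n\|_p$ by $L^p$-contractivity of $\mathcal{E}_{n-1}$. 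Invoking Remark \ref{weak_remark} then yields the desired inequality with constant $c_p = O(p)$ throughout the range $p \geq 2$.

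To cover the remaining range $1 \leq p < 2$, I would argue by interpolation from the trivial $L^1$-endpoint. At $p = 1$, the tracial property gives the exact identity
\begin{equation*}
\left\|\sum_{n=0}^N \mathcal{E}_{n-1}(u_n)\right\|_1 = \sum_{n=0}^N \tau(\mathcal{E}_{n-1}(u_n)) = \sum_{n=0}^N \tau(u_n) = \left\|\sum_{n=0}^N u_n\right\|_1,
\end{equation*}
so $c_1 = 1$. Combining this with the $p = 2$ case obtained above and applying complex interpolation to the linear operator $(u_n) \mapsto \sum_n \mathcal{E}_{n-1}(u_n)$ on the noncommutative $L^p(\ell^1_+)$ scale of Junge \cite{J} produces a constant of order $O(1)$ on $[1,2]$, which is trivially $O(p)$. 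Optimality of the linear order is inherited from the commutative setting, where the estimate is dual to Doob's $L^p$ maximal inequality and is classically known to require a constant growing like $p$ as $p \to \infty$.

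The only conceptual ingredient --- and the structural reason this refinement sharpens the $O(p^2)$ bound in \eqref{Doob} to the optimal $O(p)$ --- is the predictability of $v_n = \mathcal{E}_{n-1}(u_n)$. It is precisely this predictability that renders $(v_n - u_n)_{n \geq 0}$ a martingale difference sequence and thereby places the problem within the scope of the tangent-type machinery developed in the preceding subsections. The shift from $\mathcal{E}_n$ in \eqref{Doob} to $\mathcal{E}_{n-1}$ here is therefore not cosmetic but essential: without it, the auxiliary sequence $v_n$ would fail to satisfy hypothesis (i) of Remark \ref{weak_remark}, and one would be forced back onto the cruder $O(p^2)$ estimate.
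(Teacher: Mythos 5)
Your argument is correct and runs on the same engine as the paper's, namely Remark \ref{weak_remark} (the relaxed form of Theorem \ref{positive_theorem}), but your choice of auxiliary sequence is more economical. The paper takes $v_n=2\mathcal{E}_{n-1}(u_n)-u_n$, for which $\mathcal{E}_{n-1}(v_n^2)=\mathcal{E}_{n-1}(u_n^2)$ holds with equality and $\|v_n\|_p\leq 3\|u_n\|_p$, and then recovers the conclusion from $\sum_n\mathcal{E}_{n-1}(u_n)=\frac{1}{2}\sum_n(u_n+v_n)$ by the triangle inequality; you take $v_n=\mathcal{E}_{n-1}(u_n)$ directly, verify the second-moment hypothesis by Kadison--Schwarz and the $L^p$ hypothesis with $\kappa=1$ by contractivity of $\mathcal{E}_{n-1}$, and read off the claim in one step with a marginally better constant. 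Both are legitimate; your version shows the reflection trick is not actually needed. The one place where you work harder than necessary is the range $1\leq p<2$: the paper simply quotes the dual Doob inequality \eqref{Doob}, whose constant is harmless on a bounded range of $p$ and which applies verbatim with $\mathcal{E}_{n-1}$ in place of $\mathcal{E}_n$ because \eqref{Doob} imposes no adaptedness on $(u_n)$. Your interpolation from the exact $L^1$ identity is also valid, but if you keep it you should justify why the positive parts of the $L^p(\mathcal{M};\ell^1)$ scale interpolate (this is the nontrivial content of that step, contained in Junge's work \cite{J}); as written it is the only point of the proof that is merely sketched.
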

\begin{remark}
If we drop the assumption of the adaptedness of $(u_n)_{n=0}^N$, then the optimal order rises to $O(p^2)$: see \cite{J} and \cite{JX2}. It is also worth noting here that in the classical case, in both adapted and non-adapted settings, the sharp constant is $c_p=p$ (see Wang \cite{Wan}).
\end{remark}
\begin{proof}[Proof of Theorem \ref{better_Doob}]
If $1\leq p\leq 2$, then the estimate follows from the noncommutative analogue of Doob's inequality. Suppose that $p\geq 2$.
Consider the auxiliary sequence $v_n=2\mathcal{E}_{n-1}(u_n)-u_n$. It consists of self-adjoint operators. Furthermore, for any $n=0,\,1,\,2,\,\ldots,\,N$ we have
$ \mathcal{E}_{n-1}(v_n)=\mathcal{E}_{n-1}(u_n)$, $ \mathcal{E}_{n-1}(v_n^2)=\mathcal{E}_{n-1}(u_n^2)$ and
$$ \|v_n\|_p\leq 2\|\mathcal{E}_{n-1}(u_n)\|_p+\|u_n\|_p\leq 3\|u_n\|_p.$$
Consequently, by Remark \ref{weak_remark}, we get that
$$ \left\|\sum_{n=0}^N v_n\right\|_p\leq 3C_p\left\|\sum_{n=0}^N u_n\right\|_p$$
for some constant $C_p$ depending linearly on $p$ as $p\to \infty$. Consequently, by the triangle inequality,
$$ \left\|\sum_{n=0}^N \mathcal{E}_{n-1}(u_n)\right\|_p=\frac{1}{2}\left\|\sum_{n=0}^N (u_n+v_n)\right\|_p\leq \frac{1+3C_p}{2} \left\|\sum_{n=0}^N u_n\right\|_p.$$
The optimality of order in the classical case follows from the paper of Wang \cite{Wan} already mentioned above. This completes the proof.
\end{proof}

\subsection{Application to Schur multipliers} Now we turn our attention to classical objects in matrix theory, the so-called Schur multipliers. We start with recalling some basic definitions. The Schatten $p$-classes, denoted by $S^p$, are the noncommutative $L^p$-spaces associated with the von Neumann algebra $B(\ell^2)$ with the usual trace $\tau=\operatorname{Tr}$. We will mostly work with the finite-dimensional version of Schatten classes, denoted by $S^p_N$, which are equal to the $L^p$-spaces associated with $B(\ell^2_N)$. The elements of $S^p$ (respectively, $S^p_N$) can be represented as infinite (respectively, finite) matrices. Given $1\leq p\leq \infty$, an infinite matrix $m=(m_{ij})_{i,j\geq 1}$ is called the \emph{Schur multiplier} on $S^p$, if there is a finite constant $K_p$ depending only on $p$ such that
$$ \|m*a\|_p\leq K_p\|a\|_p$$
for all $a\in S^p$. Here the symbol `$*$' denotes the Schur (or Hadamard) mulitplication of matrices: $(m_{ij})_{i,j\geq 1}*(a_{ij})_{i,j\geq 1}=(m_{ij}a_{ij})_{i,j\geq 1}$. The norm of the Schur multiplier on $S^p$ will be denoted by $\|m\|_{S^p\to S^p}$. Similarly, for a given finite matrix $(m_{ij})_{i,j=1}^N$, the symbol $\|m\|_{S^p_N\to S^p_N}$ will stand for the norm of the operator $a\mapsto m*a$ acting on $S^p_N$.

The investigation of infinite matrices $m$ corresponding to Schur multipliers on $S^p$ has gained a lot of interest in literature. While the full characterization of such a class seems to be hopeless, much work has been done to construct examples and/or to provide such a characterization if $m$ enjoys some additional geometrical structure (e.g., $m$ is a Toeplitz or a Hankel matrix): see for example Aleksandrov and Peller\cite{AP}, Bennett \cite{Be}, Bourgain \cite{Bo} and Pisier \cite{P}.

We will show that our estimates for tangent martingales can be used to provide some information in this context as well. We will work under the assumption that the multipliers are zero-one matrices (in such a case, sometimes $m$ are referred to as Schur projections \cite{Do,DGi}).
We will need two simple yet crucial facts. First, it is easy to see that we have the following localization principle:
$$ \|m\|_{S^p\to S^p}=\lim_{N\to \infty} \|m^{(N)}\|_{S^p_N\to S^p_N},$$
where $m^{(N)}$ is the truncation of the matrix $m$ to the first $N$ columns and $N$ rows. The second observation is  the straightforward identity
$$ \tau((m*a)b)=\tau(a(m^**b)),$$
valid for all $a\in S^p$ and $b\in S^{p'}$, which in particular yields
\begin{equation}\label{duality}
\|m\|_{S^p\to S^p}=\|m\|_{S^{p'}\to S^{p'}}\quad \mbox{ for all $1<p<\infty$.}
\end{equation}

In our considerations below, we will require some basic facts about the upper triangular projection $T$. This operator acts on infinite matrices $a=(a_{ij})_{i,j\geq 1}$ by  the formula
$$ (Ta)_{i,j}=\begin{cases}
a_{ij} & \mbox{if }i\leq j,\\
0 & \mbox{otherwise.}
\end{cases}$$
It is well-known that for each $1<p<\infty$, $T$ is bounded on the Schatten $p$-class and we have $\|T\|_{S^p\to S^p}=O(p)$ as $p\to \infty$ and $\|T\|_{S^p\to S^p}=O((p-1)^{-1})$ as $p\to 1$ (see \cite{KP}).

The following statement is the main result of this subsection.

\begin{theorem}
Suppose that an infinite matrix $m$ is of the form
$$ \left[ \begin{array}{ccccc}
n_1 & m_2 & m_3 & m_4 &\ldots\\
m_2 & n_2 & m_3 & m_4 & \ldots\\
m_3 & m_3 & n_3 & m_4 & \ldots\\
m_4 & m_4 & m_4 & n_4 &\ldots\\
\ldots & \ldots & \ldots & \ldots & \ldots
\end{array}\right],$$
where $m_i,\,n_i\in \{0,1\}$.
Then  $m$ is a Schur multiplier for all $1<p<\infty$. Furthermore, for all $a\in S^p$ we have
$$\|m*a\|_{p}\leq C_p\|a\|_p,$$
where $C_p$ is of order $O(p)$ as $p\to \infty$ and $O((p-1)^{-1})$ as $p\to 1$. Both orders are optimal.
\end{theorem}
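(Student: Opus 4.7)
The plan is to realise the action of $m$ on $S^p_N=\mathbb{M}_N$ as a scalar rescaling of martingale differences for the Junge-Xu filtration recalled in Example \ref{Schur}, and then to invoke Theorem \ref{stronger_theorem}. First, I would split $m=m^\sharp+m^\flat$ into its off-diagonal part (entries $m_{\max(i,j)}$) and its diagonal part (entries $n_i$). The diagonal part $m^\flat$ acts on $S^p$ as an $\ell^\infty$-contraction, hence has norm at most $1$ for every $p$, so the entire content of the theorem reduces to $m^\sharp$.

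For a self-adjoint $a\in\mathbb{M}_N$ and each $k=1,\dots,N$, define the $k$-th \emph{off-diagonal frame} $D_k$ of $a$ to be the matrix whose $(i,j)$-entry equals $a_{ij}$ when $\max(i,j)=k$ and $i\neq j$, and zero otherwise. A brief inspection using the explicit description of $\mathcal{M}_k$ in Example \ref{Schur} shows $D_k\in\mathcal{M}_k$ (its support lies inside the upper-left $k\times k$ block) and $\mathcal{E}_{k-1}(D_k)=0$ (the nonzero entries of $D_k$ lie strictly outside the upper-left $(k-1)\times(k-1)$ block and $D_k$ has zero diagonal). Consequently $(D_k)_{k=1}^N$ is a self-adjoint martingale difference sequence, with terminal value $x_N=\sum_k D_k=a-\operatorname{diag}(a)$; setting $dy_k:=m_k D_k$ produces a second self-adjoint martingale difference sequence with $y_N=m^\sharp\ast a$. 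Since $m_k\in\{0,1\}$, one immediately obtains
\begin{equation*}
\mathcal{E}_{k-1}(dy_k^2)=m_k\,\mathcal{E}_{k-1}(dx_k^2)\leq \mathcal{E}_{k-1}(dx_k^2),\qquad \|dy_k\|_p=m_k\|dx_k\|_p\leq \|dx_k\|_p.
\end{equation*}
Thus the hypotheses of Theorem \ref{stronger_theorem} hold with $\kappa=1$ for every $p\geq 2$, and we conclude $\|m^\sharp\ast a\|_p=\|y_N\|_p\leq C_p\|x_N\|_p\leq 2C_p\|a\|_p$ with $C_p=O(p)$ as $p\to\infty$. Passing to general $a$ through its Cartesian decomposition, and then to the infinite-dimensional $S^p$ via the localisation principle, yields the bound for $p\geq 2$. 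The range $1<p<2$ follows at once from the duality identity \eqref{duality}, applicable since $m$ is real symmetric, producing the constant of order $O((p-1)^{-1})$ as $p\to 1$.

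The principal obstacle is recognising the correct martingale model; once the $D_k$ are identified as martingale differences for the Junge-Xu filtration and Schur multiplication by $m^\sharp$ is seen to act by the scalar rescaling $dx_k\mapsto m_k\,dx_k$, Theorem \ref{stronger_theorem} closes the argument automatically. Optimality of the orders is inherited from the triangular projection $T$: the upper-triangular part of $m^\sharp$ is column-constant above the diagonal and therefore factorises (up to a bounded diagonal correction) through $T$, while reversing the roles, suitable choices of $(m_i)$ allow one to recover $T$ inside this family of multipliers; the sharp $S^p$-norm behaviour $O(p)$ at $\infty$ and $O((p-1)^{-1})$ at $1$ for $T$ is the content of \cite{KP}.
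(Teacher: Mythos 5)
Your argument for the inequality itself is correct and takes a genuinely different, and in fact cleaner, route than the paper's. The paper keeps the full Junge--Xu martingale $a_n=\mathcal{E}_n a$, whose differences $da_k$ carry both the off-diagonal entries $\lambda_{k,j}$ and the diagonal data $\alpha_k,\beta_k$; since the multiplier only rescales the off-diagonal part, the transformed difference $db_k$ is \emph{not} a scalar multiple of $da_k$, and the paper is forced to invoke the tangency of $(da_k)$ and $(db_k)$ from Example \ref{Schur} together with Theorem \ref{stronger_theorem}, writing $m^{(N)}*a=(a_N+b_N)/2$. You instead peel off the diagonal first: your frames $D_k$ are precisely the off-diagonal parts of $da_k$, they are indeed martingale differences for the same filtration ($D_k\in\mathcal{M}_k$ and $\mathcal{E}_{k-1}(D_k)=0$, both checked correctly), and Schur multiplication by $m^\sharp$ becomes the scalar transform $dy_k=m_kD_k$ with $m_k\in\{0,1\}$. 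This reduces the upper bound to the martingale transform estimate of Subsection \ref{transforms} (equivalently, Theorem \ref{stronger_theorem} with $\kappa=1$) and bypasses the tangency machinery altogether; the surrounding reductions (diagonal part as a contraction, Cartesian decomposition, localization, duality via \eqref{duality} for $1<p<2$) all go through.

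The one genuine gap is the optimality of the orders. Every multiplier in this family is a symmetric matrix, so no choice of $(m_i)$ literally ``recovers $T$ inside this family'': the triangular projection is not symmetric and cannot appear as a member. The lower bound therefore requires an extra device. The paper's proof interleaves zero rows and columns through a norm-preserving embedding $t$ and verifies the identity $m*t(A)=t(T(A))$ for the reversed-$L$ pattern in which $m_k=1$ exactly when $k$ is even, whence $\|m\|_{S^p\to S^p}\geq \|T\|_{S^p\to S^p}$ and the orders $O(p)$ and $O((p-1)^{-1})$ follow from \cite{KP}. Your sketch gestures at this but supplies neither the dilation nor the identity, so as written the claim that both orders are optimal is not established. (The accompanying remark that $m^\sharp$ ``factorises through $T$'' is also superfluous: the upper bound is already delivered by your martingale argument, and routing it through $T$ would in any case only give $O((p-1)^{-1})$ near $p=1$ after a separate argument.)
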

\begin{proof}
We start with some reductions.
By \eqref{duality}, it is enough to prove the claim for $p\geq 2$. Next, we may assume that the entries on the main diagonal of $m$ are equal to $1$: indeed, a general multiplier as in the statement can be decomposed into
$$ \left[ \begin{array}{ccccc}
1 & m_2 & m_3 & m_4 &\ldots\\
m_2 & 1 & m_3 & m_4 & \ldots\\
m_3 & m_3 & 1 & m_4 & \ldots\\
m_4 & m_4 & m_4 & 1 &\ldots\\
\ldots & \ldots & \ldots & \ldots & \ldots
\end{array}\right]
+ \left[ \begin{array}{ccccc}
n_1-1 &  &  &  &\\
 & n_2-1 &  &  & \\
 &  & n_3-1 &  & \\
 &  &  & n_4-1 &\\\
 &  &  &  & \ldots
\end{array}\right].$$
The latter diagonal matrix (denote it by $m_{d}$) satisfies $\|m_d\|_{S^2\to S^2}=\|m_d\|_{S^\infty\to S^\infty}=1$ and hence $\|m_d\|_{S^p\to S^p}\leq 1$ by interpolation. This shows that the first matrix in the above decomposition decides about the size of the multiplier $m$.
The final reduction is to exploit the above localization principle: we may restrict ourselves to matrices of finite dimension $N\times N$.

 Since an arbitrary operator $a\in S^p_N$ can be decomposed into its self-adjoint and skew-symmetric parts, whose $L^p$-norms do not exceed $\|a\|_p$, it suffices to prove that for any self-adjoint $a\in S^p_N$ we have
$$ \|m*a\|_p\lesssim p\|a\|_p.$$
Now we will use the notation introduced in Example \ref{Schur} above. Given a self-adjoint operator $a\in S^p_N$, let $(a_n)_{n=0}^N=(\mathcal{E}_na)_{n=0}^N$ be the associated martingale. Then we have $a_0=da_0=\tau(a)I$ and for any $k= 1,\,2,\,\ldots,\,N$, the difference $da_k$ is of the form
$$
    da_k=\left[\begin{array}{cccccc}
  & \lambda_{k,1} & & & &\\
       & \lambda_{k,2} & &  & &\\
       & \ldots & & & &\\
       & \lambda_{k,k-1} & & & &\\
      \overline{\lambda}_{k,1}\ \overline{\lambda}_{k,2}\ \ldots\ \overline{\lambda}_{k,k-1} & \alpha_k & & & &\\
       & & \beta_k & & &\\
       & & & \beta_k & &\\
       & & & & \ldots &\\
       & & & & & \beta_k
    \end{array}\right]
 $$
for some complex numbers $\lambda_{k,1}$, $\lambda_{k,2}$, $\ldots$, $\lambda_{k,k-1}$ and some real numbers $\alpha_k$, $\beta_k$ satisfying $\alpha_k+(N-k)\beta_k=0$. Since $\gamma_k:=-1+2m_k\in \{-1,1\}$, it follows from the reasoning in Example \ref{Schur} that the sequence $(db_k)_{k=0}^N$, given by $db_0=da_0$ and
$$
    db_k=\left[\begin{array}{cccccc}
  & \gamma_k\lambda_{k,1} & & & &\\
       & \gamma_k\lambda_{k,2} & &  & &\\
       & \ldots & & & &\\
       & \gamma_k\lambda_{k,k-1} & & & &\\
     \gamma_k\overline{\lambda}_{k,1}\ \gamma_k\overline{\lambda}_{k,2}\ \ldots\ \gamma_k\overline{\lambda}_{k,k-1} & \alpha_k & & & &\\
       & & \beta_k & & &\\
       & & & \beta_k & &\\
       & & & & \ldots &\\
       & & & & & \beta_k
    \end{array}\right]
 $$
for $k\geq 1$, is tangent to $(da_k)_{k=0}^N$ (and is obviously a martingale difference sequence). Therefore, Theorem \ref{stronger_theorem} implies that $ \|b_N\|_p\leq C_p\|a_N\|_p$
for some constant $C_p$ depending linearly on $p$ as $p\to \infty$, and hence
$$ \left\|m^{(N)}*a\right\|_p=\left\|\frac{a_N+b_N}{2}\right\|_p\leq \frac{1+C_p}{2}\|a_N\|_p.$$
This establishes the estimate $\|m\|_{S^p\to S^p}\leq O(p)$ as $p\to \infty$. To prove the reverse bound, we will use the properties of triangular projections. Consider the zero-one matrix
$$ m=\left[\begin{array}{ccccccc}
0 & 1 & 0 & 1 & 0 & 1 & \ldots \\
1 & 1 & 0 & 1 & 0 & 1 & \ldots\\
0 & 0 & 0 & 1 & 0 & 1 & \ldots\\
1 & 1 & 1 & 1 & 0 & 1 & \ldots\\
0 & 0 & 0 & 0 & 0 & 1 & \ldots\\
1 & 1 & 1 & 1 & 1 & 1 & \ldots\\
\ldots & \ldots &\ldots &\ldots &\ldots &\ldots & \ldots
\end{array}\right],$$
where zeros and ones run in a reversed $L$-pattern. We introduce the following transformation $t$ of the Schatten classes: given a matrix $A\in S^p$, we insert a row of zeros between any two consecutive rows of $A$ and a column of zeros between any two consecutive columns of $A$; furthermore, we insert a column of zeros in front the first column of $A$. Formally, if $A=(a_{ij})_{i,j\geq 1}$, then
$$ t(A)=\left[\begin{array}{ccccccc}
0 & a_{11} & 0 & a_{12} & 0 & a_{13} &\ldots\\
0 & 0 & 0 & 0 & 0 & 0 & \ldots\\
0 & a_{21} & 0 & a_{22} & 0 & a_{23} &\ldots\\
0 & 0 & 0 & 0 & 0 & 0 & \ldots\\
0 & a_{31} & 0 & a_{32} & 0 & a_{33} &\ldots\\
\ldots & \ldots & \ldots & \ldots & \ldots & \ldots & \ldots
\end{array}\right].$$
A crucial observation, which links $m$, $t$ and the triangular projection $T$, is the identity
$$ m*t(A)=t(T(A)).$$
Since adding a row/column of zeros does not change the norm, we have $\|t(A)\|_{S^p}=\|A\|_{S^p}$.
Consequently, we see that $\|m\|_{S^p\to S^p}\geq \|t\circ T\|_{S^p\to S^p}=\|T\|_{S^p\to S^p}=O(p)$ as $p\to \infty$. This completes the proof.
\end{proof}

\section{Applications to harmonic analysis}

The purpose of this final section is to provide some exemplary applications of good-$\lambda$ approach in noncommutative harmonic analysis. More precisely, we will first study $L^p$-estimates for differentially subordinate operators, which have their roots in the classical results on the boundedness of Hilbert transform on the real line (for the relevant definitions, see below); we also investigate $L^p$-estimate of the $j$-th Riesz transform on group von Neumann algebras. Both constants are of optimal order. Finally we will establish a certain square-function-type estimates in the context of contractive semigroups on semifinite von Neumann algebras.

We start with the necessary definitions and notation in the semigroup theory. The literature on the subject is extremely extensive, so we will only introduce some basic facts and notions, and refer the interested reader to \cite{JM,OP,Pet,P,Va} for the more detailed exposition. Throughout, we assume that $(T_t)_{t\geq 0}$ is a semigroup of completely positive maps on a semifinite von Neumann algebra $\mathcal{N}$, satisfying the following \emph{standard assumptions}.

\begin{itemize}
\item[(i)] Every $T_t$ is a unital normal positive map on $\mathcal{N}$.
\item[(ii)] Every $T_t$ is self-adjoint with respect to the trace: $\tau(T_t(x)y)=\tau(xT_t(y))$.
\item[(iii)] The family $(T_t)_{t\geq 0}$ is a strongly continuous semigroup on $L^p(\mathcal{N})$ for every $1\leq p<\infty$ with nonnegative generator $A$, i.e., $T_t=e^{-tA}$.
\item[(iv)] There exists a weakly dense self-adjoint subalgebra $\mathcal{A}\subset \mathcal{N}$ such that $T_t(\mathcal{A})\subset \mathcal{A}$ and $A(\mathcal{A})\subset \mathcal{A}$.
\end{itemize}

The first two conditions imply $\tau(T_t x) = \tau(x)$ for all $x$, so the operators $T_t$ are faithful and are contractive on $L^1(\mathcal{N})$. Furthermore, by interpolation, these operators extend to contractions on $
L^p(\mathcal{N})$, $1 \leq  p < \infty$, and satisfy $\lim_{t\to 0} T_t x = x$ in $L^p(\mathcal{N})$ for all $x \in L^p(\mathcal N)$.
For $1\leq p<\infty$, let $dom_p(A)$ be the class of all operators $x\in L^p(\mathcal{N})$ for which $\lim_{t\to \infty} t^{-1}(T_tx-x)$ exists in $L^p(\mathcal N)$. In the condition (iv) above we actually assume a little more, namely, that $\mathcal{A}\subset dom_p(A)$ for all $p$.  We introduce the gradient form
$$ 2\Gamma(x,y)=A(x^*)y+x^*A(y)-A(x^*y)$$
for $x,\,y\in \mathcal{A}$. The semigroup $(T_t)_{t\geq 0}$ is said to satisfy the condition $\Gamma^2\geq 0$, if for all $x\in \mathcal{A}$ and all $t\geq 0$ we have the inequality $\Gamma(T_tx,T_tx)\leq T_t\Gamma(x,x)$.

In the considerations below, we will also consider the subordinated
Poisson semigroup $(P_t)_{t\geq 0}$ defined by $P_t = \exp(-t A^{1/2})$. This is again a semigroup satisfying (i)-(iii) above; we will also assume that $P_t(\mathcal{A})\subseteq \mathcal{A}$. It is easy to check that $P_t$ can be explicitly expressed in terms of the operators $(T_s)_{s\geq 0}$ via the so-called subordination formula
\begin{equation}\label{subor}
 P_t=\frac{1}{2\sqrt{\pi}}\int_0^\infty t\exp\left(-\frac{t^2}{4u}\right)u^{-3/2}T_u\mbox{d}u.
\end{equation}

We will also assume the existence of the additional structure, the so-called Markov dilation, which will enable us to apply the probabilistic arguments and the good-$\lambda$ approach.

\begin{definition}
We say that a semigroup $(T_t)_{t\geq 0}$ on a von Neumann algebra $\mathcal{N}$ admits  a reverse Markov dilation, if there exists a larger von Neumann algebra $\mathcal{M}$ and a family $\pi_s:\mathcal{N}\to \mathcal{M}$ of trace preserving $^*$-homomorphisms such that
\begin{equation}\label{Mdil}
 \mathcal{E}_{[s}(\pi_t(x))=\pi_s(T_{s-t}x),
\end{equation}
for all $t<s$ and $x\in \mathcal{N}$, where $\mathcal{E}_{[s}$ is the conditional expectation with respect to $\mathcal{M}_{[s}$ and $\mathcal M_{[s}$ is the von Neumann algebra generated by  $\{\pi_r(x):$ $x\in\mathcal N$, $r\geq s\}$.
\end{definition}

In our considerations below we will often refer to noncommutative continuous-time martingales, whose definition carries over, with no essential change, from the discrete-time case presented in Section 2. We would like to emphasize that our approach will be to discretize these processes first and then apply the good-$\lambda$ method - hence we do not need to worry about the technical difficulties which do arise when one passes from the dicrete- to continuous-time processes. We would only like to introduce here the concept of vanishing $p$-th variation, which will be of importance for us later.

\begin{definition}
Let $x=(x_t)_{t\in [0,S]}$ be a continuous-time martingale on some von Neumann algebra $\mathcal{M}$, adapted to some filtration $(\mathcal{M}_t)_{t\in [0,S]}$. We say that the $p$-th variation of $x$ is zero, if for any refining sequence of partitions $(t_k^{(n)})_{k=0}^{N_n}$, $n=1,\,2,\,\ldots$ of the interval $[0,S]$ satisfying $\max_{1\leq k\leq N_{n}}|t_{k}^{(n)}-t_{k-1}^{(n)}|\xrightarrow{n\to\infty}0$ we have
$$ \lim_{n\to\infty}\sum_{k=1}^{N_n} \|x_{t_k}-x_{t_{k-1}}\|_{L^p(\mathcal{M})}^p=0.$$
\end{definition}

\begin{remark}In a sense, the vanishing $p$-th variation measures the regularity of paths; in the classical setting, any continuous-path martingale enjoys this property for any $p>2$. In the noncommutative setting, this condition is implied by the so-called almost uniform continuity: see e.g. \cite{JM,JM2} for details.
\end{remark}

\subsection{Differential subordination of operators}\label{diffo} Now we will introduce a certain domination relation for pairs of self-adjoint elements of a given von Neumann algebra, and prove that it implies an appropriate $L^p$ estimate. Let us start with the motivation coming from the classical harmonic analysis.
Consider the heat semigroup acting on $L^\infty(\R)$ by the convolution
$$ T_tf(x)=\int_\R f(y)\cdot \frac{1}{2\sqrt{\pi t}}\exp\left(-\frac{(x-y)^2}{4t}\right)\mbox{d}y,\qquad t>0.$$
This semigroup satisfies the standard assumptions and the associated Poisson semigroup is given by
$$ P_tf(x)=\int_\R f(y)\cdot \frac{1}{\pi}\frac{t}{t^2+(y-x)^2}\mbox{d}y,\qquad t>0.$$
Suppose further that $f:\R\to \R$ is a $C^\infty$ function and let $Hf$ be its nonperiodic Hilbert transform given by the Cauchy principal value
$$  {H} f(x)=\frac{1}{\pi}\mbox{p.v.}\int_\R\frac{f(y)}{x-y}\mbox{d}y. $$
Let $u(t,x)=P_tf$ and $v(t,x)=P_t(Hf)$ be the extensions of $f$ and $Hf$ to the halfplane $(0,\infty)\times \R$. It is well-known that $u$ and $v$ satisfy Cauchy-Riemann equations, which in particular implies the following domination between $u$ and $v$:
\begin{equation}\label{differential harmonic}
  \left(\frac{d v}{dt}\right)^2+\left(\frac{dv}{dx}\right)^2\leq \left(\frac{d u}{dt}\right)^2+\left(\frac{du}{dx}\right)^2
\end{equation}
  (actually, we even have equality here). Using martingale methods (cf. \cite{BW}) one can prove that this domination yields the $L^p$-boundedness of the Hilbert transform with the  constant $\cot(\pi/2p^*)$, where $p^*=\max\{p,p/(p-1)\}$ (this constant is the best possible, see \cite{Pi}). In other words, the domination condition \eqref{differential harmonic} on the half-plane enforces the appropriate boundary behavior of $u$ and $v$ (i.e., of $f=u|_\R$ and $Hf=v|_\R$).

There is a natural question about the noncommutative analogue of the above phenomenon. We start with the introduction of the appropriate version of \eqref{differential harmonic}.

\begin{definition}
Let$(T_t)_{t\geq 0}$ be a standard semigroup on some von Neumann algebra $\mathcal{N}$, with the associated gradient form $\Gamma$ and the subordinated Poisson semigroup $(P_t)_{t\geq 0}$ given by \eqref{subor}. Assume further that  $x$, $y$ are two self-adjoint elements of $\mathcal{N}$. We say that $y$ is differentially subordinate to $x$, if
\begin{equation}\label{Poisson_differential}
 \left(\frac{d}{dt}P_ty\right)^2+\Gamma(P_ty,P_ty)\leq \left(\frac{d}{dt}P_tx\right)^2+\Gamma(P_tx,P_tx)\qquad \mbox{for all }t>0.
\end{equation}
\end{definition}

Our primary goal is to show that the above domination implies the $L^p$ bound between $x$ and $y$; the good-$\lambda$ approach will allow us to obtain the corresponding constants of optimal order. To study this problem, we introduce an additional Brownian component into the picture, following the argumentation in \cite{JM,JM2}, which actually can be tracked back to the classical works of Meyer \cite{Mey}. Let $(T_t)_{t\geq 0}$ be a standard semigroup with generator $A$ admitting a reverse Markov dilation $(\pi_s:\mathcal{N}\to \mathcal{M})_{s\geq 0}$. Introduce a new generator
$$ \widehat{A}=-\frac{d^2}{dt^2}\otimes I+I\otimes A,$$
acting on (a weak dense self-adjoint subalgebra) of the von Neumann algebra $\widehat{\mathcal{N}}=L^\infty(\R) \overline \otimes \mathcal{N}$ equipped with the standard tensor trace. This generator leads to a new semigroup $\widehat{T}_t = \exp(-t \widehat{A})$ with the corresponding gradient form
$$ \widehat{\Gamma}(f(t),g(t))= \frac{df^*(t)}{dt}\frac{dg(t)}{dt}+\Gamma( f (t), g(t)).$$
Next, suppose that $(B_t)_{t\geq 0}$ is a one-dimensional Brownian motion on some probability space $(\Omega,\F,\mathbb{P})$, started at zero and run at the double speed (so that its generator is $d^2/dt^2$). We define the stopping times $\sigma_a=\inf\{t\geq 0:B_t=-a\}$ for any $a>0$.

Arguing as in  \cite[Lemma 2.5.3]{JM}, one can show the following.

\begin{lemma}\label{MM9}
 For any $x\in L^1(\mathcal{N})+\mathcal{N}$ and any $a,S>0$, the process
\begin{equation}\label{marting}
\widehat{n}_{a,S}(x)=\big(\pi_{S-t\wedge \sigma_a}(P_{a+B_{t\wedge \sigma_a}}x)\big)_{0\leq t\leq S}
\end{equation}
is a martingale on the von Neumann algebra $\widehat{\mathcal{M}}=L^\infty(\Omega)\overline\otimes \mathcal{M}$, adapted to the filtration $\big(\widehat{\mathcal{M}}_t\big)_{0\leq t\leq S}=\big(\sigma(B_s)_{s\leq t}\overline\otimes \mathcal{M}_{[S-t}\big)_{0\leq t\leq S}$. Furthermore, the process
$$ |\widehat{n}_{a,S}(x)_t|^2-2\int_{0}^{ t\wedge \sigma_a} \pi_{S-u}\widehat{\Gamma}(P_{a+B_u}x,P_{a+B_u}x)\mbox{d}u,\qquad 0\leq t\leq S,$$
is a martingale with respect to $(\widehat{\mathcal{M}}_t)_{0\leq t\leq S}$.
\end{lemma}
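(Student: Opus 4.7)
My plan is to decompose the process into its Brownian and reverse-Markov components and apply a noncommutative Itô-type calculus, exploiting the fact that the ``extra'' spatial variable furnished by the Brownian motion is precisely what subordinates the Poisson semigroup to the heat semigroup. Throughout, I would use the double-speed convention $d[B]_t = 2\,dt$ and the operator identities $\partial_b P_{a+b}x = -A^{1/2}P_{a+b}x$ and $\partial_b^2 P_{a+b}x = AP_{a+b}x$.

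For the martingale property, fix $0\leq t_1 < t_2 \leq S$. Since the Brownian motion is independent of $\mathcal{M}$, conditioning on $\widehat{\mathcal{M}}_{t_1} = \sigma(B_u)_{u\leq t_1}\overline\otimes \mathcal{M}_{[S-t_1}$ can be done in two steps: first over the Gaussian increment $B_{t_2}-B_{t_1}$ with density $p_{t_2-t_1}(y)=(4\pi(t_2-t_1))^{-1/2}\exp(-y^2/(4(t_2-t_1)))$, then via $\mathcal{E}_{[S-t_1}$. Using the dilation identity \eqref{Mdil}, this yields
\begin{equation*}
\mathcal{E}_{\widehat{\mathcal{M}}_{t_1}}\bigl(\pi_{S-t_2}(P_{a+B_{t_2}}x)\bigr)
= \pi_{S-t_1}\!\left[T_{t_2-t_1}\!\int P_{a+B_{t_1}+y}x\cdot p_{t_2-t_1}(y)\,dy\right].
\end{equation*}
The proof reduces to the spectral identity $\int e^{-yA^{1/2}}p_t(y)\,dy = e^{tA}$ (which is the subordination of the Poisson semigroup to the heat kernel), implying $T_{t_2-t_1}\!\int P_{a+B_{t_1}+y}\cdot p_{t_2-t_1}(y)\,dy = P_{a+B_{t_1}}$. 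On $\{t_1 < \sigma_a\}$ we have $a+B_{t_1}>0$, and the stopping at $\sigma_a$ plus optional stopping legitimizes the identity for the stopped process $\widehat{n}_{a,S}(x)_t$.

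For the bracket identity, I would invoke the noncommutative Itô formula in the framework of \cite{JM} and decompose, on $\{t<\sigma_a\}$, the differential of $M_t := \widehat{n}_{a,S}(x)_t$ into a Brownian martingale piece
$\pi_{S-t}(-A^{1/2}P_{a+B_t}x)\,dB_t$
and a reverse-Markov martingale piece (the orthogonal increment coming from the $\pi_{S-t}$-part of the dilation). The Itô drift from the Brownian piece, $\pi_{S-t}(AP_{a+B_t}x)\,dt$, cancels exactly with the ``generator drift'' $-\pi_{S-t}(AP_{a+B_t}x)\,dt$ coming from the reverse-Markov direction (using \eqref{Mdil} to first order), reconfirming that $M_t$ is itself a martingale. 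Then noncommutative Itô for $|M_t|^2$ gives $d|M_t|^2 = M_t^*\,dM_t+(dM_t)^*M_t + d[M,M]_t$, and the predictable bracket receives two contributions: $(A^{1/2}P_{a+B_t}x)^2\,d[B]_t = 2(\partial_t P_{a+B_t}x)^2\,dt$ from the Brownian piece (with the factor $2$ from the double speed), plus $2\Gamma(P_{a+B_t}x,P_{a+B_t}x)\,dt$ from the reverse-Markov piece via the standard semigroup-bracket formula. Carrying $\pi_{S-t}$ inside the sum, I obtain the advertised integrand $2\pi_{S-u}\widehat\Gamma(P_{a+B_u}x,P_{a+B_u}x)\,du$.

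The main obstacle is running this Itô calculus rigorously for a process that is simultaneously noncommutative, time-reversed through the dilation $\pi_{S-t}$, and stopped at a Brownian hitting time. I would bypass it by discretizing: along a refining partition of $[0,S]$, the identity \eqref{Mdil} gives the martingale property and a finite-sum expression for the bracket at each level; passing to the continuous limit requires the vanishing second variation of the reverse Markov dilation (a structural feature in the Junge--Mei framework) together with standard convergence for the Brownian bracket, followed by localization on $\{t<\sigma_a\}$ and optional stopping to extend the identities to $t\wedge\sigma_a$.
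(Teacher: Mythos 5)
Your proposal is correct and follows essentially the same route as the source: the paper itself gives no proof of this lemma, only the remark ``Arguing as in [JM, Lemma 2.5.3]'', and the Junge--Mei argument is precisely the one you sketch --- Gaussian averaging plus the dilation identity \eqref{Mdil} for the martingale property (with the subordination identity $T_{s}\int P_{b+y}\,p_{s}(y)\,dy=P_{b}$ coming from $\partial_b^2P_b=AP_b$), the exact cancellation of the Brownian It\^o drift against the reverse-Markov drift, and the bracket $2(\partial_bP_bx)^2\,dt+2\Gamma(P_bx,P_bx)\,dt=2\widehat\Gamma\,dt$, all made rigorous by discretization and stopping at $\sigma_a$. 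The one genuine subtlety --- that $P_c$ is unbounded for $c<0$, so the unstopped Gaussian identity is only formal and the argument must be run for the stopped process from the outset --- is one you correctly identify and defer to the localization at $\sigma_a$.
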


We will prove the following statement.

\begin{theorem} \label{opcor}
Let $2\leq p<\infty$. Suppose that $x$, $y$ are two self-adjoint elements of $L^p(\mathcal{N})$ such that $\lim_{t\to\infty} P_ty=0$ in $L^p$, $y$ is differentially subordinate to $x$ and for any $a,\,S>0$, the martingale $\widehat{n}_{a,S}(y)$ has vanishing $p$-th variation.
Then we have
$$\|y\|_{L^p(\mathcal{N})}\leq C_p\|x\|_{L^p(\mathcal{N})}$$
for some constant $C_p$ of order $O(p)$ as $p\to\infty$. The order is optimal.
\end{theorem}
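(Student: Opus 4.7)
The plan is to transfer the operator-theoretic differential subordination on the semigroup level to a discrete martingale conditional quadratic variation domination, then invoke Theorem~\ref{main-theorem} together with the enlarged-algebra construction used in Section~\ref{ABDG}. The case $p=2$ is separate and follows from the identity $\|Y_{S}\|_{2}^{2}-\|Y_{0}\|_{2}^{2}=\E\langle Y\rangle_{S}$ together with $\langle Y\rangle\le\langle X\rangle$, so assume $p>2$ below. Fix $a,S>0$ and set $X_{t}=\widehat n_{a,S}(x)_{t}$, $Y_{t}=\widehat n_{a,S}(y)_{t}$ on $\widehat{\mathcal M}$. Choose a refining sequence of partitions $0=t_{0}<t_{1}<\cdots<t_{N}=S$ and write $X_{k}=X_{t_{k}}$, $Y_{k}=Y_{t_{k}}$, $dY_{k}=Y_{k}-Y_{k-1}$, $dX_{k}=X_{k}-X_{k-1}$.

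By Lemma~\ref{MM9} the quadratic variation of $Y$ is $\langle Y\rangle_{t}=2\int_{0}^{t\wedge\sigma_{a}}\pi_{S-u}\widehat{\Gamma}(P_{a+B_{u}}y,P_{a+B_{u}}y)\,du$, and analogously for $X$. Combining $\widehat{\Gamma}(P_{t}y,P_{t}y)\le\widehat{\Gamma}(P_{t}x,P_{t}x)$ with positivity of the $^{\ast}$-homomorphisms $\pi_{s}$ and monotonicity of integration gives the pointwise operator bound $d\langle Y\rangle_{t}\le d\langle X\rangle_{t}$, hence $\mathcal{E}_{t_{k-1}}(dY_{k}^{2})\le\mathcal{E}_{t_{k-1}}(dX_{k}^{2})$ for every $k$. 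Embed into the enlarged algebra $\mathcal N'=\mathbb M_{N+2}\bar\otimes\widehat{\mathcal M}$ with the natural tensor filtration and, following the construction in Section~\ref{ABDG}, define
\begin{equation*}
d\tilde X_{k}=(e_{11}+e_{k+2,k+2})\otimes dX_{k},\quad d\tilde Y_{k}=(e_{11}+e_{k+2,k+2})\otimes dY_{k},\quad z_{N}=\Big(\sum_{k}|d\tilde Y_{k}|^{p}\Big)^{1/p}.
\end{equation*}
A direct block-diagonal computation gives $z_{N}^{2}=e_{11}\otimes\bigl(\sum_{j}|dY_{j}|^{p}\bigr)^{2/p}+\sum_{j}e_{j+2,j+2}\otimes dY_{j}^{2}$; since $t\mapsto t^{2/p}$ is operator monotone on $[0,\infty)$ for $p\ge 2$ (L\"owner--Heinz), $\bigl(\sum_{j}|dY_{j}|^{p}\bigr)^{2/p}\ge dY_{k}^{2}$, whence $z_{N}^{2}\ge d\tilde Y_{k}^{2}$ as operators. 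Together with the conditional quadratic variation bound and the martingale property of $\tilde X$, this verifies the strong good-$\lambda$ testing conditions of Definition~\ref{strong test condition def} for the triple $(\tilde X_{N},\tilde Y,z_{N})$.

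Theorem~\ref{main-theorem} now yields $\|\tilde Y_{N}\|_{p}\le C_{p}\bigl(\|\tilde X_{N}\|_{p}^{2}+\|z_{N}\|_{p}^{2}\bigr)^{1/2}$ with $C_{p}$ of order $p$. Three routine facts close the argument: $\tilde Y_{N}$ has $e_{11}\otimes Y_{N}$ as a diagonal block, so $\|\tilde Y_{N}\|_{p}\ge\|Y_{N}\|_{p}$; the interpolation inequality \eqref{boundb} gives $\|\tilde X_{N}\|_{p}\le(1+2^{p-2})^{1/p}\|X_{N}\|_{p}\lesssim\|X_{N}\|_{p}$; and $\|z_{N}\|_{p}^{p}=2\sum_{k}\|dY_{k}\|_{p}^{p}\to 0$ along refining partitions by the vanishing $p$-th variation hypothesis. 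This produces $\|Y_{S}\|_{p}\le C_{p}\|X_{S}\|_{p}$; letting $S\to\infty$ with $a$ fixed and exploiting that $\sigma_{a}<\infty$ almost surely, the $L^{p}$-contractivity of $P_{t}$ and the assumption $\lim_{t\to\infty}P_{t}y=0$, one obtains $\|Y_{S}\|_{p}\to\|y\|_{p}$ and $\|X_{S}\|_{p}\to\|x\|_{p}$, completing the proof. The chief difficulty is the choice of $z_{N}$: the naive $z_{N}=S(Y)$ forces $\|z_{N}\|_{p}\lesssim_p\|Y_{N}\|_{p}$ by noncommutative Burkholder--Gundy and yields a circular estimate, so the enlarged-algebra trick combined with operator monotonicity of $t\mapsto t^{2/p}$ (available only for $p\ge 2$) is the key device that trades this contribution for a vanishing diagonal $\ell^p$-norm. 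Optimality of the order $O(p)$ is classical, already realised by the Hilbert transform whose Pichorides constant $\cot(\pi/(2p^{*}))$ grows like $2p/\pi$ as $p\to\infty$.
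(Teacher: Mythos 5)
Your strategy is essentially the paper's: discretize $\widehat n_{a,S}(x)$ and $\widehat n_{a,S}(y)$, use Lemma \ref{MM9} together with differential subordination to obtain $\mathcal{E}_{k-1}(dY_k^2)\le\mathcal{E}_{k-1}(dX_k^2)$, take the auxiliary operator to be the diagonal $\ell^p$-sum of the increments, apply Theorem \ref{main-theorem}, and pass to the limit with the partition and with $S,a$. One stylistic remark: the matrix amplification $\mathbb M_{N+2}\overline{\otimes}\widehat{\mathcal M}$ is superfluous here. Since $Y$ is already a self-adjoint martingale, the domination $\mathcal{E}_{k-1}(dY_k^2)\le\mathcal{E}_{k-1}(dX_k^2)$ together with $Z_N^2\ge dY_k^2$ (operator monotonicity of $t\mapsto t^{2/p}$) verifies the testing conditions for the unamplified triple $(X_N,Y,Z_N)$ on $\widehat{\mathcal M}$ directly, which is what the paper does; the amplification of Subsection \ref{ABDG} is only needed when the object to be bounded (there, the square function) is not itself a martingale.

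There is, however, one genuine slip: you do not recenter $Y$, so $dY_0=Y_0=\pi_S(P_ay)\ne0$. Condition (ii) of Definition \ref{test condition} must hold for $k=0$ as well, so your $z_N$ has to dominate $dY_0^2$ and therefore must include the term $|dY_0|^p$. But the vanishing $p$-th variation hypothesis controls only the increments $Y_{t_k}-Y_{t_{k-1}}$ with $k\ge1$, not the initial value; consequently $\|z_N\|_p^p\ge 2\|P_ay\|_p^p$ for every partition, and your claim that $\|z_N\|_p\to0$ along refining partitions is false for fixed $a$. The paper avoids this by setting $Y_k=\widehat n_{a,S}(y)_{t_k}-\widehat n_{a,S}(y)_0$, so that $Y_0=0$ and $dY_0^2=0\le dX_0^2$, and then recovers $\|y\|_{L^p(\mathcal N)}$ at the end via the triangle inequality and $\|P_ay\|_p\to0$ as $a\to\infty$ --- which is precisely where the hypothesis $\lim_{t\to\infty}P_ty=0$ enters. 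Your argument closes once you either perform this recentering, or keep the $\|P_ay\|_p$ contribution in the estimate and dispose of it only in the limit $a\to\infty$; as written, the order of limits is not justified.
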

\begin{proof} We will prove the claim for $p>2$ only; it will be clear how to modify the argument in the case $p=2$.
Pick $x$ and $y$ as in the statement. Let $a,\,S>0$ be arbitrary numbers and let $0=t_0<t_1<t_2<\ldots<t_N=S$ be a partition of $[0,S]$. Consider the discrete-time martingales
$$ X_k=\widehat{n}_{a,S}(x)_{t_k},\qquad Y_k=\widehat{n}_{a,S}(y)_{t_k}-\widehat{n}_{a,S}(y)_0,\qquad k=0,\,1,\,2,\,\ldots, N.$$
In addition, let $Z_N=\left(\sum_{n=0}^N |dY_n|^p\right)^{1/p}$ and let $(\mathcal{M}_k)_{k= 0}^N$ be the natural filtration generated by $(X_k)_{k=0}^N$ and $(Y_k)_{k=0}^N$. Then the triple $(X_N,Y,Z_N)$ satisfies the good-$\lambda$ testing conditions. Indeed, the second condition is evident; the first follows from $dX_0^2=(\pi_S(P_ax))^2\geq 0=dY_0^2$ and
\begin{align*}
 \mathcal{E}_{k-1}dX_k^2 &=2\mathcal{E}_{k-1}\left\{\int_{t_{k-1}\wedge\sigma_a}^{t_k\wedge \sigma_a} \pi_{S-u}\widehat{\Gamma}(P_{a+B_u}x,P_{a+B_u}x)\mbox{d}u\right\}\\
 &\geq 2\mathcal{E}_{k-1}\left\{\int_{t_{k-1}\wedge \sigma_a}^{t_k\wedge \sigma_a} \pi_{S-u}\widehat{\Gamma}(P_{a+B_u}y,P_{a+B_u}y)\mbox{d}u\right\}=\mathcal{E}_{k-1}dY_k^2
\end{align*}
for $k\geq 1$, where the first and the last equality is due to the second part of Lemma \ref{MM9}, while the middle inequality is due to the differential subordination (compare \eqref{Poisson_differential} with the definition of $\widehat{\Gamma}$).
Consequently, we get
$$ \|Y_N\|_{L^p(\widehat{\mathcal{M}})}\leq C_p\left(\|X_N\|^2_{L^p(\widehat{\mathcal{M}})}+\|Z_N\|^2_{L^p(\widehat{\mathcal{M}})}\right)^{1/2}.$$
Now, since the $p$-th variation of $\widehat{n}_{a,S}(y)$ vanishes, taking the limit with partition $\{t_n\}$ yields
$$ \|\pi_{S-S\wedge \sigma_a}(P_{a+B_{S\wedge \sigma_a}}y)-\pi_S(P_ay)\|_{L^p(\widehat{\mathcal{M}})}\leq C_p\|\pi_{S-S\wedge \sigma_a}(x)\|_{L^p(\widehat{\mathcal{M}})}.$$
Observe that the right-hand side is equal to $C_p\|x\|_{L^p(\mathcal{N})}$, while the left-hand side is not smaller than
$$ \|\pi_{S-S\wedge \sigma_a}(P_{a+B_{S\wedge \sigma_a}}y)\|_{L^p(\widehat{\mathcal{M}})}-\|\pi_S(P_ay)\|_{L^p(\widehat{\mathcal{M}})}=\|P_{a+B_{S\wedge \sigma_a}}y\|_{L^p(\widehat{\mathcal{N}})}-\|P_ay\|_{L^p(\mathcal{N})}.$$
If we let $S\to \infty$, the right-hand side tends to $\|y\|_{L^p(\mathcal{N})}-\|P_ay\|_{L^p(\mathcal{N})}$.
Combining the above estimates, letting $a\to \infty$ and using the assumption $P_ay\to 0$, we get the desired estimate. The optimality of the linear order of the constant follows from the context of classical Hilbert transform on $\R$ presented above (it is easy to check that all the assumptions on the semigroups $(T_t)_{t\geq 0}$, $(P_t)_{t\geq 0}$ are satisfied).
\end{proof}

We would like to mention that there is an alternative domination expressed solely in the language of the semigroup $(T_t)_{t\geq 0}$, which also implies the corresponding $L^p$ bound. The definition is as follows.

\begin{definition}
Suppose that $(T_t)_{t\geq 0}$ is a semigroup on a given von Neumann algebra $\mathcal{N}$ satisfying the standard assumptions, and let $x,\,y\in \mathcal{N}$ be two self-adjoint operators. We say that $x$ dominates $y$, if for any $t> 0$ we have
$$ \Gamma(T_ty,T_ty)\leq \Gamma(T_tx,T_tx).$$
\end{definition}

We will prove the following analogue of Theorem \ref{opcor}.

\begin{theorem}\label{opth}
Let $(T_t)_{t\geq 0}$ be a semigroup of positive contractions on a given semifinite von Neumann algebra, satisfying the standard assumptions and admitting a reverse Markov dilation.
Let $2\leq  p<\infty$ and assume that $x$, $y$ are two self-adjoint elements of $L^p(\mathcal{N})$ such that $x$ dominates $y$, $\lim_{t\to\infty}T_ty=0$ in $L_p(\mathcal N)$ and the martingale $(\pi_t(T_ty))_{0\leq t\leq S}$ has vanishing $p$-th variation for each $S>0$. Then we have
\begin{equation}\label{Prin}
 \|y\|_{L^p(\mathcal{N})}\leq C_p\|x\|_{L^p(\mathcal{N})},
\end{equation}
where $C_p$ is of order $O(p)$ as $p \to \infty$.
\end{theorem}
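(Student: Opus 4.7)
To prove Theorem \ref{opth}, the plan is to mirror the argument for Theorem \ref{opcor}, but to replace the Brownian-supplemented dilation $\widehat{n}_{a,S}$ by a semigroup-based martingale constructed directly from the reverse Markov dilation. For each $S>0$ and $z\in\{x,y\}$ I would set
$$ m^z_t := \pi_{S-t}(T_{S-t} z),\qquad 0\leq t\leq S,$$
and equip $\mathcal{M}$ with the forward filtration $\mathcal{F}_t=\mathcal{M}_{[S-t}$. A direct application of \eqref{Mdil} shows that $(m^z_t)_{0\leq t\leq S}$ is an $(\mathcal{F}_t)$-martingale with $m^z_0=\pi_S(T_Sz)$ and $m^z_S=\pi_0(z)$, so that the endpoint is a trace-preserving copy of $z$ itself.

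The crucial technical step---and the main obstacle---is to establish a quadratic-variation identity analogous to the second part of Lemma \ref{MM9}, namely that
$$ |m^z_t|^2 - 2\int_0^t \pi_{S-u}\,\Gamma(T_{S-u}z,T_{S-u}z)\,\mathrm{d}u,\qquad 0\leq t\leq S,$$
is an $(\mathcal{F}_t)$-martingale. This is the Brownian-free analogue of the Meyer-type identity exploited in \cite{JM}, and should follow from standard noncommutative stochastic-calculus arguments together with the defining relation $2\Gamma(f,f)=A(f^*)f+f^*A(f)-A(f^*f)$. Once the identity is in hand, the hypothesis $\Gamma(T_ty,T_ty)\leq\Gamma(T_tx,T_tx)$ translates immediately into a conditional-variance domination: for any partition $0=t_0<t_1<\ldots<t_N=S$, setting $X_k=m^x_{t_k}$ and $Y_k=m^y_{t_k}-m^y_0$, one obtains
$$\mathcal{E}_{k-1}(dY_k^2)=2\mathcal{E}_{k-1}\!\!\int_{t_{k-1}}^{t_k}\!\pi_{S-u}\Gamma(T_{S-u}y,T_{S-u}y)\,\mathrm{d}u\leq \mathcal{E}_{k-1}(dX_k^2),\quad k\geq 1,$$
while $dX_0^2\geq 0=dY_0^2$ trivially.

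With the conditional-variance bound available, I would take $Z_N=\bigl(\sum_{n=0}^N|dY_n|^p\bigr)^{1/p}$ and mirror Subsection \ref{ABDG} to verify the good-$\lambda$ testing conditions of Definition \ref{test condition}: condition (ii) is immediate from the operator-monotonicity of $t\mapsto t^{2/p}$, and condition (i) follows from the above bound combined with the martingale property of $X$, exactly as in the proof of Burkholder-Gundy in Subsection \ref{ABDG}. Theorem \ref{main-theorem} then delivers
$$ \|Y_N\|_{L^p(\mathcal{M})}\leq C_p\bigl(\|X_N\|_{L^p(\mathcal{M})}^2+\|Z_N\|_{L^p(\mathcal{M})}^2\bigr)^{1/2},\qquad C_p=O(p),$$
in which $Y_N=m^y_S-m^y_0$ and $\|X_N\|_{L^p(\mathcal{M})}=\|\pi_0(x)\|_{L^p(\mathcal{M})}=\|x\|_{L^p(\mathcal{N})}$ are both partition-independent. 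Refining the partition, the assumed vanishing of the $p$-th variation of $(\pi_t(T_ty))_{0\leq t\leq S}$ forces $\|Z_N\|_{L^p(\mathcal{M})}^p=\sum_k\|dY_k\|_{L^p(\mathcal{M})}^p\to 0$, leaving
$$\|\pi_0(y)-\pi_S(T_Sy)\|_{L^p(\mathcal{M})}\leq C_p\|x\|_{L^p(\mathcal{N})}.$$
The triangle inequality, the identity $\|\pi_0(y)\|_{L^p(\mathcal{M})}=\|y\|_{L^p(\mathcal{N})}$, and the hypothesis $T_Sy\to 0$ in $L^p(\mathcal{N})$ then give \eqref{Prin} as $S\to\infty$. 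Optimality of the linear order in $p$ is already witnessed by the classical Hilbert-transform example cited in the proof of Theorem \ref{opcor}.
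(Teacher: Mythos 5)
Your proposal is correct and follows essentially the same route as the paper: discretize the reverse-time martingale $\pi_t(T_t y)$, verify the good-$\lambda$ testing conditions via the conditional-variance domination coming from $\Gamma(T_ty,T_ty)\leq\Gamma(T_tx,T_tx)$, apply Theorem \ref{main-theorem}, and kill the diagonal term $Z_N$ using the vanishing $p$-th variation. The quadratic-variation identity you single out as the main obstacle is not really an obstacle: only its discrete conditional form is needed, and that follows from \eqref{Mdil} together with the elementary computation $\frac{d}{dr}\,T_{s-r}\big((T_r z)^2\big)=-2\,T_{s-r}\Gamma(T_r z,T_r z)$, exactly as the paper carries out in Lemma \ref{gc1}.
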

\begin{proof}
Again, we focus on the case $p>2$. Let $S>0$ be a fixed number and let $0=t_0<t_1<t_2<\ldots<t_N=S$ be an arbitrary partition of $[0,S]$. Consider the martingales
$$ X_n=\pi_{t_{N-n}}(T_{t_{N-n}}x),\qquad Y_n=\pi_{t_{N-n}}(T_{t_{N-n}}y)-\pi_S(T_Sy)$$
for $n=0,\,1,\,2,\,\ldots,\,N$, which are both adapted to the filtration $(\mathcal{M}_n)_{n=0}^N=(\mathcal{M}_{[t_{N-n}})_{n=0}^N$ (note that $Y_0=0$). Setting
$$ Z_N=\left(\sum_{n=0}^N |dY_n|^p\right)^{1/p},$$
we check as previously that the triple $(X_N,Y,Z_N)$ satisfies the good-$\lambda$ testing conditions and hence
$$ \|Y_N\|_{L^p(\mathcal{M})}\leq C_p\left(\|X_N\|_{L^p(\mathcal{M})}^2+\|Z_N\|_{L^p(\mathcal{M})}^2\right)^{1/2},$$
where $C_p$ is the constant of Theorem \ref{main-theorem}. Passing to the limit with the partition, we get
$$ \|\pi_0(y)-\pi_S(T_Sy)\|_{L^p(\mathcal{M})}\leq C_p \|\pi_0(x)\|_{L^p(\mathcal{M})}= C_p\|x\|_{L^p(\mathcal{N})}.$$
It remains to let $S\to \infty$ to get the claim, since then $T_Sy\to 0$.
\end{proof}

\subsection{Riesz transforms on group von Neumann algebras}
Our second application concerns sharp $L^p$ bounds for noncommutative Riesz transforms associated with conditionally negative length functions on group von Neumann algebras. Recall that classical Riesz transforms \cite{St} in $\R^d$ are the operators $R_j=\partial_j (-\Delta)^{-1/2}$, $j=1,\,2,\,\ldots,\,d$, where $\Delta$ is the usual Laplacian. These objects are higher-dimensional analogues of the nonperiodic Hilbert transform, and their $L^p$-boundedness is of fundamental importance to harmonic analysis.

Inspired by the recent paper \cite{JMP}, we will apply the good-$\lambda$ approach to establish $L^p$-bound for the $j$-th Riesz transforms arising in the context of group von Neumann algebras. Let $G$ be a discrete group with left regular representation $\lambda:G\to \mathcal{B}(\ell_2(G))$ given by $\lambda(g)\delta_h=\delta_{gh}$. Here, as usual, $(\delta_g)_{g\in G}$ refers to the unit vector basis of $\ell_2(G)$. Then $\mathcal{L}(G)$, the associated group von Neumann algebra, is the weak operator closure of the linear span of $\lambda(G)$ in $\mathcal{B}(\ell_2(G))$. We equip this algebra with the standard trace $\tau$ uniquely determined by the equalities $\tau(\lambda(g))=1$ if $g=e$ and $\tau(\lambda(g))=0$ if $g\neq e$, where $e$ is the identity of $G$. Any element $f$ of $\mathcal{L}(G)$ admits the Fourier expansion
$$ f=\sum_{g\in G} \hat{f}(g)\lambda(g),$$
with $\tau(f)=\hat{f}(e)$. Consider the semigroup $T_\psi=(T_{\psi,t})_{t\geq 0}$ of operators on $\mathcal{L}(G)$, whose action is determined by the requirement
$$ T_{\psi,t}\lambda(g)=e^{-t\psi(g)}\lambda(g),\qquad t\geq 0,\,g\in G,$$
for some function $\psi:G\to \R$. Here we assume that $\psi$ is \emph{a conditionally negative length}, which amounts to saying that $\psi$ is real-valued, satisfies $\psi(e)=0$, $\psi(g)=\psi(g^{-1})$ for all $g\in G$ and also enjoys the inequality $\sum_{g,h\in G} \overline{a_g}a_h\psi(g^{-1}h)\leq 0$ for all sequences $(a_g)_{g\in G}$ of complex numbers which sum up to $0$. Then, by Schoenberg's theorem, $T_{\psi}$ satisfies the standard assumptions. Furthermore, it follows from the results of Ricard \cite{Ric} that the semigroup admits the reverse Markov dilation $\pi=(\pi_s:\mathcal{L}(G)\to \mathcal{M})_{s\geq 0}$. Let $A_\psi$ denote the generator of $T_\psi$. It is known that this operator acts via multiplication: $A_\psi\lambda(g)=\psi(g)\lambda(g)$. To define the associated Riesz transforms, we need to introduce an appropriate differential structure linked with $\psi$. Namely, conditionally negative lengths correspond to the affine representations $(\mathcal{H}_\psi,\alpha_\psi,b_\psi)$, where $\alpha_\psi:G\to O(\mathcal{H}_\psi)$ is an orthogonal representation over a real Hilbert space $\mathcal{H}_\psi$ and $b_\psi:G\to \mathcal{H}_\psi$ is a mapping satisfying the cocycle law
$$ b_\psi(gh)=\alpha_{\psi,g}(b_\psi(h))+b_\psi(g).$$
Motivated by the equality $\partial_j \exp(2\pi i\langle x,\cdot\rangle)=2\pi i x_j\exp(2\pi i\langle x,\cdot\rangle)$, the $j$-th Riesz transform was defined in \cite{JMP} by setting
$$ R_{\psi,j}f=\partial_{\psi,j}A_{\psi}^{-1/2}f=2\pi i \sum_{g\in G} \frac{\langle b_\psi(g),e_j\rangle_{\mathcal{H}_\psi}}{\sqrt{\psi(g)}}\hat{f}(g)\lambda(g),$$
where $(e_j)_{j\geq 1}$ is a certain fixed orthonormal basis of $\mathcal{H}_\psi$.
Our primary goal is to establish the $L^p$ bounds $(p\geq 2)$ for these objects, with the optimal order of the constant as $p\to \infty$. To this end, we will need a certain stochastic representation of Riesz transforms. To shorten and simplify the notation, in what follows, we will write $T$, $P$ instead of $T_\psi$ and $P_\psi$. Recall the martingales introduced in \eqref{marting} above.

\begin{theorem}\label{rpr}
For any $f,\,\varphi\in \mathcal{L}(G)$ with finite Fourier expansion we have the representation
\begin{equation}\label{identitys}
 \begin{split}
&\tau\left(\varphi R_{\psi,j}f\right)\\
&=-\lim_{a\to\infty}\lim_{S\to \infty}\lim_{K\to \infty}2\tau\left(\widehat{n}_{a,S}(\varphi)_S \sum_{m=0}^\infty \pi_{S-t_m^K}\left(\partial_{\psi,j}P_{a+B_{t_m^K}}f\right)(B_{t_{m+1}^K}-B_{t_m^K})\right),
\end{split}
\end{equation}
where $t_m^K=(m\cdot 2^{-K})\wedge S\wedge\sigma_a$.
\end{theorem}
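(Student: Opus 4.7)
The plan is to first recognize the Riemann sum, after $K\to\infty$, as an It\^o stochastic integral; then compute the trace pairing with $\widehat n_{a,S}(\varphi)_S$ via the It\^o isometry and the martingale decomposition supplied by Lemma~\ref{MM9}; and finally reduce the result to $\tau(\varphi R_{\psi,j}f)$ by an inverse application of It\^o's formula followed by the limits $S\to\infty$, $a\to\infty$.

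Since $f$ has finite Fourier expansion, the integrand $s\mapsto\pi_{S-s}(\partial_{\psi,j}P_{a+B_s}f)$ is bounded and almost surely continuous on $[0,S\wedge\sigma_a]$, so as $K\to\infty$ the left-endpoint sum in the statement converges in $L^2(\widehat{\mathcal M})$ to
$$M_S:=\int_0^{S\wedge\sigma_a}\pi_{S-s}(\partial_{\psi,j}P_{a+B_s}f)\,dB_s,$$
a purely ``Brownian'' martingale on $\widehat{\mathcal M}$ with $M_0=0$. On the other hand, applying It\^o's formula to $r\mapsto P_r\varphi$ along $a+B_t$ (using $\tfrac{d^2}{dr^2}P_r=A_\psi P_r$ and $d\langle B\rangle_t=2\,dt$) produces the decomposition
$$\widehat n_{a,S}(\varphi)_S=\pi_S(P_a\varphi)-\int_0^{S\wedge\sigma_a}\pi_{S-s}(\sqrt{A_\psi}P_{a+B_s}\varphi)\,dB_s+N_S,$$
where $N$ is the ``vertical'' martingale part contributed by the Markov dilation; the quadratic variation formula in Lemma~\ref{MM9} matches this, the $(df/dt)^2$-term of $\widehat\Gamma$ being the variation of the Brownian integral and the $\Gamma$-term being $\langle N\rangle$. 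Since $\pi_S(P_a\varphi)\in\widehat{\mathcal M}_0$, $M_0=0$, and $\langle N,M\rangle=0$, only the explicit Brownian integral survives in $\tau(\widehat n_{a,S}(\varphi)_SM_S)$.

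The It\^o isometry, combined with the trace-preserving homomorphism property of $\pi_{S-s}$ and the trace symmetry $\tau(P_t x\cdot y)=\tau(x\cdot P_t y)$ (together with commutativity of the Fourier multipliers $P_r,\sqrt{A_\psi},\partial_{\psi,j}$), collapses the pairing to
$$\tau(\widehat n_{a,S}(\varphi)_SM_S)=-2\,\mathbb{E}\int_0^{S\wedge\sigma_a}\tau\bigl(\varphi\cdot\sqrt{A_\psi}\partial_{\psi,j}P_{2(a+B_s)}f\bigr)\,ds.$$
Using $\sqrt{A_\psi}\partial_{\psi,j}=A_\psi R_{\psi,j}$ and $\tfrac{d^2}{dr^2}P_{2r}=4A_\psi P_{2r}$ one rewrites the integrand as $\tfrac14 h''(a+B_s)$, where $h(r):=\tau(\varphi R_{\psi,j}P_{2r}f)$ is scalar and bounded; It\^o's formula applied to $h(a+B_t)$, followed by expectation (the stochastic integral has zero mean since $h'$ is bounded), then gives
$$\tau(\widehat n_{a,S}(\varphi)_SM_S)=\tfrac12 h(a)-\tfrac12\mathbb{E}h(a+B_{S\wedge\sigma_a}).$$
Since $\sigma_a<\infty$ a.s., as $S\to\infty$ we have $a+B_{S\wedge\sigma_a}\to 0$, and bounded convergence yields $\mathbb{E}h(a+B_{S\wedge\sigma_a})\to h(0)=\tau(\varphi R_{\psi,j}f)$; letting $a\to\infty$ then uses $P_{2a}f\to 0$ (ensured by $\hat f(e)=0$, implicit in the definition of $R_{\psi,j}$) to force $h(a)\to 0$. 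The triple limit therefore equals $-\tfrac12\tau(\varphi R_{\psi,j}f)$, and multiplication by $-2$ yields \eqref{identitys}. The main technical obstacle is to justify cleanly the It\^o decomposition of $\widehat n_{a,S}(\varphi)$ into its Brownian and vertical parts and the vanishing of the covariation $\langle M,N\rangle$ in the noncommutative setting; once these are in place, the rest follows the classical Gundy--Varopoulos template.
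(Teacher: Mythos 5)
Your argument is correct and reaches the right identity, but it is organized quite differently from the paper's proof. The paper first linearizes: by bilinearity it suffices to take $f=\lambda(g)$, $\varphi=\lambda(h)$ with $g=h^{-1}$, and then every trace of a product of dilated operators collapses, via the reverse dilation identity $\mathcal{E}_{[s}\pi_t(x)=\pi_s(T_{s-t}x)$, to a purely scalar expectation in the Brownian motion; the resulting quantity $\mathbb{E}\int_0^{\sigma_a}e^{-2(a+B_u)\sqrt{\psi(g)}}\,2\,du$ is then evaluated in closed form using the explicit joint density of $(B_u,B_u^*)$. You instead keep $f,\varphi$ general, pass to the It\^o integral $M_S$, pair it with an It\^o decomposition of $\widehat n_{a,S}(\varphi)$, and finish with Dynkin's formula for the scalar function $h(r)=\tau(\varphi R_{\psi,j}P_{2r}f)$; I checked that your constants and signs are consistent (in the model case $h(r)=e^{-2r\sqrt{\psi(g)}}\cdot 2\pi i\langle b_\psi(g),e_j\rangle/\sqrt{\psi(g)}$, your $\tfrac12 h(a)-\tfrac12 h(0)$ agrees exactly with the paper's closed-form expectation as $a\to\infty$). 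Your route is conceptually cleaner — no reduction to single modes and no reflection-principle density — but the step you yourself flag as the main obstacle, namely the decomposition of $\widehat n_{a,S}(\varphi)$ into a Brownian part and a ``vertical'' part with $\langle M,N\rangle=0$, is precisely what the paper avoids having to justify: it never decomposes $\widehat n_{a,S}(\varphi)$ at all, but computes the discrete covariation term by term, using only the martingale property of $\widehat n_{a,S}(\varphi)$ and the dilation identity to reduce each summand to classical stochastic calculus. If you want to make your version rigorous without importing a noncommutative It\^o theory, you should replace the appeal to $\langle M,N\rangle=0$ by exactly this discrete computation (note that $\tau(\widehat n_{a,S}(\varphi)_S u_m)=\tau\big((\widehat n_{a,S}(\varphi)_{t_{m+1}^K}-\widehat n_{a,S}(\varphi)_{t_m^K})u_m\big)$ for each summand $u_m$, after which the dilation identity scalarizes everything); with that substitution your proof and the paper's become two evaluations of the same covariation.
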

\begin{proof}
It is enough to check the identity for $f=\lambda(g)$ and $\varphi=\lambda(h)$, by the bilinearity of both sides with respect to $f$ and $\varphi$. For such a choice of $f$ and $\varphi$,  it suffices to show the equality for $g=h^{-1}$, since otherwise both sides are zero. Directly from the definition of $R_{\psi,j}$, we compute that
$$ \tau\left(\varphi R_{\psi,j}f\right)=\frac{2\pi i\langle b_\psi(g),e_j\rangle_{\mathcal{H}_\psi}}{\sqrt{\psi(g)}}.$$
To study the right-hand side of \eqref{identitys}, pick arbitrary numbers $0\leq s<t\leq S$ and observe that
\begin{align*}
&\tau\bigg\{\bigg(\pi_{S-t\wedge \sigma_a}(P_{a+B_{t\wedge \sigma_a}}\varphi)-\pi_{S-s\wedge \sigma_a}(P_{a+B_{s\wedge \sigma_a}}\varphi\bigg)\\
&\qquad \qquad \qquad \qquad  \qquad \bigg(\pi_{S-s\wedge \sigma_a}(\partial_{\psi,j}P_{a+B_{s\wedge \sigma_a}}f)(B_{t\wedge \sigma_a}-B_{s\wedge \sigma_a})\bigg)\bigg\}\\
&=\tau\bigg\{\pi_{S-s\wedge \sigma_a}\bigg[\big(T_{t\wedge \sigma_a-s\wedge \sigma_a}P_{a+B_{t\wedge \sigma_a}}\varphi-P_{a+B_{s\wedge \sigma_a}}\varphi\big)\\
&\qquad \qquad \qquad \qquad \qquad
(\partial_{\psi,j}P_{a+B_{s\wedge \sigma_a}}f)(B_{t\wedge \sigma_a}-B_{s\wedge \sigma_a})\bigg]\bigg\}\\
&=\E \bigg\{\left(e^{-(t\wedge \sigma_a-s\wedge \sigma_a)\psi(g)}e^{-(a+B_{t\wedge \sigma_a})\sqrt{\psi(g)}}-e^{-(a+B_{s\wedge \sigma_a})\sqrt{\psi(g)}}\right)\\
&\qquad \qquad \qquad \qquad  \qquad \left(2\pi i\langle b_\psi(g),e_j\rangle_{\mathcal{H}_\psi}e^{-(a+B_{s\wedge \sigma_a})\sqrt{\psi(g)}}(B_{t\wedge \sigma_a}-B_{s\wedge \sigma_a})\right)\bigg\}.
\end{align*}
This, by standard stochastic calculus, is equal to
\begin{align*}
 -2\pi i\langle b_\psi(g),e_j&\rangle_{\mathcal{H}_\psi}\sqrt{\psi(g)}\E \int_{s\wedge \sigma_a}^{t\wedge \sigma_a} e^{-(u-s\wedge \sigma_a)\psi(g)-(a+B_u)\sqrt{\psi(g)}-(a+B_{s\wedge \sigma_a})\sqrt{\psi(g)}}2du\\
&=-2\pi i\langle b_\psi(g),e_j\rangle_{\mathcal{H}_\psi}\sqrt{\psi(g)}\bigg\{\E \int_{s\wedge \sigma_a}^{t\wedge \sigma_a} e^{-2(a+B_u)\sqrt{\psi(g)}}2\mbox{d}u+o(t-s)\bigg\}.
\end{align*}
Therefore,
\begin{equation}\label{intermee}
\begin{split}
&\lim_{S\to \infty}\lim_{K\to \infty}\tau\left(\widehat{n}_{a,S}(\varphi)_S \sum_{m=0}^\infty \pi_{S-t_m^K}\left(\partial_{\psi,j}P_{a+B_{t_m^K}}f\right)(B_{t_{m+1}^K}-B_{t_m^K})\right)\\
&\qquad =-\lim_{S\to \infty} 2\pi i\langle b_\psi(g),e_j\rangle_{\mathcal{H}_\psi}\sqrt{\psi(g)}\E\int_0^{S\wedge \sigma_a} e^{-2(a+B_u)\sqrt{\psi(g)}}2\mbox{d}u\\
&\qquad = -2\pi i\langle b_\psi(g),e_j\rangle_{\mathcal{H}_\psi}\sqrt{\psi(g)}\E\int_0^{\sigma_a} e^{-2(a+B_u)\sqrt{\psi(g)}}2\mbox{d}u.
\end{split}\end{equation}
Now we will compute expectation of the latter integral,
using some elementary properties of Brownian motion and its maximal function $B^*=(B^*_t)_{t\geq 0}=(\max_{s\leq t}B_s)_{t\geq 0}$. It is well-known (see \cite[p.110]{RY}) that for any $u\geq 0$ the density of $(B_u,B^*_u)$ equals
$$ g_u(\alpha,\beta)=\frac{1}{2\sqrt{\pi}}u^{-3/2}(2\beta-\alpha)\exp\left(-\frac{(2\beta-\alpha)^2}{4u}\right)1_{\{ \alpha\leq \beta,\,\beta\geq 0\}}.$$
Replacing $(B_u,B_u^*)$ by $(-B_u,(-B_u)^*)$, we see that the last expectation in \eqref{intermee} is
\begin{align*}
 &\E\int_0^{\infty} e^{(-2a-2B_u)\sqrt{\psi(g)}}1_{\{(-B_u)^*\leq a\}}2\mbox{d}u\\
 &=\int_0^a \int_0^\beta \int_0^\infty e^{(-2a+2\alpha)\sqrt{\psi(g)}}\frac{1}{2\sqrt{\pi}}u^{-3/2}(2\beta-\alpha)\exp\left(-\frac{(2\beta-\alpha)^2}{4u}\right)2\mbox{d}u\mbox{d}\alpha\mbox{d}\beta\\
 &=\int_0^a \int_0^\beta e^{(-2a+2\alpha)\sqrt{\psi(g)}}2\mbox{d}\alpha\mbox{d}\beta\\
 &=\frac{1-e^{-2a\sqrt{\psi(g)}}}{2\psi(g)}-\frac{ae^{-2a\sqrt{\psi(g)}}}{\sqrt{\psi(g)}}.
\end{align*}
Therefore, letting $a\to \infty$ in \eqref{intermee} we obtain the desired assertion.
\end{proof}

\begin{theorem}
For any $2\leq p<\infty$ and any $j$ we have
$$ \|R_{\psi,j}\|_{\mathcal{L}^p(G)\to \mathcal{L}^p(G)}\leq c_p,$$
where $c_p=O(p)$ as $p\to \infty$. The order is the best possible: in the classical setting the norm is equal to $\cot(\pi/2p)$.
\end{theorem}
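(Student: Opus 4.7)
The plan is to combine the stochastic representation of Theorem \ref{rpr} with the discrete-time good-$\lambda$ machinery, following the pattern of Theorem \ref{opcor}, and to conclude via duality. First I would fix $\varphi$ and $f$ with finite Fourier expansion, with $\|\varphi\|_{p'} \leq 1$ (where $p' = p/(p-1)$), and aim to bound $|\tau(\varphi R_{\psi,j} f)|$ by $c_p \|f\|_p$. Applying \eqref{identitys} and H\"older's inequality reduces the task to estimating
$$\|\widehat{n}_{a,S}(\varphi)_S\|_{p'} \cdot \|\Xi_{a,S,K}\|_p,$$
where $\Xi_{a,S,K}$ denotes the discrete stochastic sum on the right-hand side of \eqref{identitys}. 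The first factor is immediately bounded by $\|\varphi\|_{p'}$: indeed $\pi_s$ is a trace-preserving $*$-homomorphism and $(P_t)$ is $L^{p'}$-contractive, and conditioning on the Brownian path gives $\|\pi_{S-S\wedge\sigma_a}(P_{a+B_{S\wedge\sigma_a}}\varphi)\|_{p'}\leq\|\varphi\|_{p'}$.

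The second factor is the heart of the argument. I would interpret the partial sums
$$X_M = \sum_{m=0}^{M-1} \pi_{S-t_m^K}\bigl(\partial_{\psi,j}P_{a+B_{t_m^K}}f\bigr)\bigl(B_{t_{m+1}^K}-B_{t_m^K}\bigr)$$
(after a standard self-adjoint symmetrization) as a discrete-time martingale on $\widehat{\mathcal{M}}$, and set $Y_M=\widehat{n}_{a,S}(f)_{t_M^K}-\widehat{n}_{a,S}(f)_0$. A short It\^o-type computation using that $B$ is run at double speed gives
$$\mathcal{E}_{M-1}(dX_M^2) = 2(t_M^K-t_{M-1}^K)\,\pi_{S-t_{M-1}^K}\bigl|\partial_{\psi,j}P_{a+B_{t_{M-1}^K}}f\bigr|^2,$$
while the second part of Lemma \ref{MM9} yields the analogous expression for $Y_M$ with $\widehat{\Gamma}$ in place of $|\partial_{\psi,j}\,\cdot\,|^2$. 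The cocycle decomposition of the gradient form on group von Neumann algebras gives $\Gamma(h,h) = \sum_j |\partial_{\psi,j}h|^2$, so $\mathcal{E}_{M-1}(dX_M^2) \leq \mathcal{E}_{M-1}(dY_M^2)$, which is precisely the conditional domination behind the good-$\lambda$ testing conditions of Definition \ref{test condition}. Taking $z_N=(\sum_m|dX_m|^p)^{1/p}$, the verifications performed in Subsections \ref{ABDG} and \ref{transforms} apply verbatim, and Theorem \ref{main-theorem} combined with the interpolation estimate \eqref{boundb} yields $\|\Xi_{a,S,K}\|_p\leq O(p)\|f\|_p$, uniformly in $a$, $S$, $K$.

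Passing to the iterated limits $K\to\infty$, $S\to\infty$, $a\to\infty$ and invoking duality then produces $\|R_{\psi,j}\|_{L^p\to L^p} = O(p)$; optimality of the linear order follows by specializing to the classical Hilbert transform case covered by the framework of Subsection \ref{diffo}, whose sharp $L^p$-norm $\cot(\pi/(2p))$ grows linearly as $p\to\infty$. The step I expect to be the main obstacle is the rigorous verification of the conditional quadratic-increment identity for $X_M$ and its domination by $\mathcal{E}_{M-1}(dY_M^2)$: one has to track operator orderings carefully when squaring the products against $B_{t_{m+1}^K}-B_{t_m^K}$, exploit that these Brownian increments are central in $\widehat{\mathcal{M}}$, and use that the $(d/dt)P_tf$ term inside $\widehat{\Gamma}$ provides just enough slack for the inequality to hold for each fixed index $j$. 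Without the good-$\lambda$ machinery, an approach based on noncommutative Burkholder/Rosenthal inequalities would only yield the suboptimal order $O(p^{3/2}/\log p)$.
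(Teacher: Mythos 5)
Your overall architecture is exactly the paper's: pair $R_{\psi,j}f$ against $\varphi$ via the stochastic representation \eqref{identitys}, view the discrete stochastic sum as a martingale on $\widehat{\mathcal{M}}$ whose conditional quadratic increments are dominated by those of $\widehat{n}_{a,S}(f)$ (using $\widehat{\Gamma}\geq |\partial_{\psi,j}\cdot|^2$ from the cocycle form of $\Gamma$ plus the Brownian/It\^o computation), feed this into Theorem \ref{main-theorem}, and conclude by H\"older and the iterated limits. The paper's version of your quadratic-increment step also needs a correction factor $\kappa_K\to 1$ coming from replacing the integrand $\pi_{S-u}(\partial_{\psi,j}P_{a+B_u}f)^2$ over $[t_{k-1}^K,t_k^K]$ by its value at the left endpoint (this uses the finiteness of the Fourier expansion of $f$), but you correctly flag this as the delicate point.

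There is, however, one concrete gap: your control of the auxiliary term $z_N=\bigl(\sum_m|dX_m|^p\bigr)^{1/p}$. You invoke the interpolation bound \eqref{boundb}, but that estimate controls $\bigl(\sum_k\|d\tilde x_k\|_p^p\bigr)^{1/p}$ by the terminal value of the \emph{same} martingale; applied here it gives $\|z_N\|_p\lesssim\|X_N\|_p=\|\Xi_{a,S,K}\|_p$, and the resulting inequality $\|\Xi\|_p\leq C_p\bigl(\|f\|_p^2+4\|\Xi\|_p^2\bigr)^{1/2}$ does not close since $2C_p>1$. Consequently the claimed bound ``uniformly in $a,S,K$'' is not available by this route. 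The correct argument (and the one the paper uses) is that $\|z_N\|_{L^p(\widehat{\mathcal{M}})}\to 0$ as $K\to\infty$: each increment satisfies $\|dX_m\|_p\lesssim_f\|B_{t_{m+1}^K}-B_{t_m^K}\|_{L^p(\Omega)}\lesssim 2^{-K/2}$, so $\sum_m\|dX_m\|_p^p\lesssim 2^K S\cdot 2^{-Kp/2}\to0$ for $p>2$ (classical continuity of Brownian paths). With that substitution, the $z_N$ term disappears in the limit $K\to\infty$ and the rest of your argument goes through as in the paper.
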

\begin{proof} For $p=2$ there is nothing to prove, so from now on we assume $p>2$.
Take an arbitrary $f$ with a finite expansion. Fix $a,\,S,\,K$ and consider the martingales $x=x^{a,S,K}$ and $y=y^{a,S,K}$ on $\widehat{\mathcal{M}}=L^\infty(\Omega)\otimes \mathcal{M}$ (recall that $\mathcal{M}$ is the target space of the dilation $\pi$) given by
$$ x_k=\widehat{n}_{a,S}(f)_{t_k^K}, \qquad y_k=\sum_{m=0}^{k-1} \pi_{S-t_m^K}\left(\partial_{\psi,j}P_{a+B_{t_m^K}}f\right)(B_{t_{m+1}^K}-B_{t_m^K}),$$
for $k=0,\,1,\,2,\,\ldots$ (with respect to their natural filtration).
Both sequences stabilize after $N$ steps, where $N$ is an arbitrary number  bigger than $2^K S$. As usual, put
$$ z_N=z_N^{a,S,K}=\left(\sum_{k=0}^N |dy_k|^p\right)^{1/p}.$$
Then $(\kappa_{K}x_N,y,z_N)$ satisfy the good-$\lambda$ testing conditions, where $\kappa_{K}>1$ is a certain function depending on $K$ (and $f$, but we keep the function fixed), converging to $1$ as $K\to \infty$. Indeed, the second is evident, while the first is due to
\begin{align*}
 \mathcal{E}_{k-1}(dx_k^2)&=2\mathcal{E}_{k-1}\int_{t_{k-1}^K}^{t_k^K}\pi_{S-u}\widehat{\Gamma}(P_{a+B_u}f,P_{a+B_u}f)\mbox{d}u\\
 &\geq \mathcal{E}_{k-1}\int_{t_{k-1}^K}^{t_k^K}\pi_{S-u}\left(\partial_{\psi,j}P_{a+B_{u}}f\right)^2\mbox{d}u\\
 &\geq  \mathcal{E}_{k-1}\left\{\kappa_K^{-2}\pi_{S-t_{k-1}^K}\left(\partial_{\psi,j}P_{a+B_{t_{k-1}^K}}f\right)^2(t_k^K-t_{k-1}^K)\right\}=\kappa_K^{-2}\mathcal{E}_{k-1}(dy_k^2).
\end{align*}
Here in the last inequality we have used the fact that the expansion of $f$ is finite. Consequently, using good-$\lambda$ approach, we obtain that
$$ \|y_N^{a,S,K}\|_{L^p(\widehat{\mathcal{M}})}\leq C_p\left(\kappa_K^2\|x_N^{a,S,K}\|_{L^p(\widehat{\mathcal{M}})}^2+\|z_N^{a,S,K}\|_{L^p(\widehat{\mathcal{M}})}^2\right)^{1/2}$$
for some constant $C_p$ of the linear order as $p\to \infty$. Note that
$$ \|x_N^{a,S,K}\|_{L^p(\widehat{\mathcal{M}})}=\|\pi_{S-S\wedge \sigma_a}(P_{a+B_{S\wedge \sigma_a}}f)\|_{L^p(\widehat{\mathcal{M}})}=\|P_{a+B_{S\wedge \sigma_a}}f\|_{L^p(\widehat{\mathcal{M}})},$$
which converges to $\|f\|_{\mathcal{L}^p(G)}$ as $S\to \infty$.
Now let us go back to Theorem \ref{rpr}. The above estimates imply that for any $\varphi\in \mathcal{L}^{p'}(G)$ of finite Fourier expansion we have
\begin{align*}
 &\tau\left(-\widehat{n}_{a,S}(\varphi)_S \sum_{m=0}^\infty \pi_{S-t_m^K}\left(\partial_{\psi,j}P_{a+B_{t_m^K}}f\right)(B_{t_{m+1}^K}-B_{t_m^K})\right)\\
 &\leq \|\widehat{n}_{a,S}(\varphi)_S\|_{L^{p'}(\widehat{\mathcal{M}})}\|y_N^{a,S,K}\|_{L^p(\widehat{\mathcal{M}})}\\
&\leq C_p\|\varphi\|_{\mathcal{L}^{p'}(G)}\left(\kappa_K^2\|f\|_{{\mathcal{L}^p(G)}}^2+\|z_N^{a,S,K}\|_{L^p(\widehat{\mathcal{M}})}^2\right)^{1/2}.
\end{align*}
However, $\|z_N^{a,S,K}\|_{L^p(\widehat{\mathcal{M}})}\to 0$ as $K \to \infty$, which follows from the (classical) continuity of Brownian paths. Consequently, by Theorem \ref{rpr}, we obtain
$$ \tau(\varphi R_{\psi,j}f)\leq 2C_p\|\varphi\|_{\mathcal{L}^{p'}(G)}\|f\|_{\mathcal{L}^p(G)}.$$
This yields the claim.
\end{proof}

\subsection{Square-function estimates} Our final application is an $L^p$ bound between a self-adjoint element $a\in\mathcal{A}$ and the associated square-function expressed in terms of the associated gradient form. Before we formulate the result, let us introduce a certain requirement closely related to the notion of standard dilation. Suppose that $(T_t)_{t\geq 0}$ is a standard semigroup admitting a reverse Markov dilation $(\pi_t)_{t\geq 0}$. At some places below we will need to assume that there is a von Neumann algebra $\mathcal{M}_{0]}$ with the associated conditional expectation $\mathcal{E}_{0]}$ such that
\begin{equation}\label{Mdil2}
\mathcal{E}_{0]}(\pi_t(x))=\pi_0(T_t(x))\qquad \mbox{for }t>0\mbox{ and }x\in \mathcal{N}.
\end{equation}

We will establish the following fact.

\begin{theorem}\label{semigr1}
Let $2\leq p<\infty$ and assume that  $(T_t)_{t\geq 0}$ is a semigroup satisfying the standard assumptions, the condition $\Gamma^2\geq 0$ and admitting a reverse Markov dilation. Suppose further that the semigroup has the property that for each self-adjoint operator $x$ and any $S>0$, the reverse-time martingale $(\mathcal{E}_{[t}x)_{0\leq t\leq S}=(\pi_t(T_tx))_{0\leq t\leq S}$ has vanishing $p$-th variation. Let $a\in \mathcal{A}$ be a self-adjoint operator.

(i) If $\lim_{t\to\infty} T_ta=0$ in $L_p(\mathcal N)$, then
\begin{equation}\label{mainsemigr}
 \|a\|_{L^p(\mathcal{N})}\leq C_p\left\|\left(\int_0^\infty \Gamma(T_sa,T_sa)\mbox{d}s\right)^{1/2}\right\|_{L^p(\mathcal{N})}
 \end{equation}
for some constant $C_p$ of order $O(p)$ as $p\to \infty$.

(ii) If \eqref{Mdil2} holds, then
\begin{equation}\label{mainsemigr22}
\left\|\left(\int_0^\infty \Gamma(T_sa,T_sa)\mbox{d}s\right)^{1/2}\right\|_{L^p(\mathcal{N})}\leq C_p\|a\|_{L^p(\mathcal{N})},
 \end{equation}
for some constant $C_p$ of order $O(p)$ as $p\to \infty$.
\end{theorem}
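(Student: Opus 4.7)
The plan is to apply Theorem~\ref{main-theorem} to a discretization of the reverse-time martingale $m_t=\pi_t(T_ta)$ on $[0,S]$ (with respect to the filtration $(\mathcal{M}_{[t})$), and then pass to the continuous limit both in the mesh of the partition and in $S$. Fix a partition $0=t_0<\cdots<t_N=S$ and set
$$Y_n=\pi_{t_{N-n}}(T_{t_{N-n}}a)-\pi_S(T_Sa),\qquad n=0,\ldots,N,$$
which is a martingale for $(\mathcal{M}_{[t_{N-n}})_{n=0}^N$ with $Y_0=0$. Under $\Gamma^2\geq 0$ and \eqref{Mdil}, the Meyer identity (the process $|\pi_t(T_ta)|^2-2\int_0^t\pi_u\Gamma(T_ua,T_ua)\,du$ is a reverse-time martingale) yields
$$\mathcal{E}_{n-1}(dY_n^2)=2\,\mathcal{E}_{n-1}\!\int_{t_{N-n}}^{t_{N-n+1}}\pi_s\Gamma(T_sa,T_sa)\,ds,$$
exactly as in the computation performed in the proof of Theorem~\ref{opth}.

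For part (i), take $x_N=\sqrt2\bigl(\int_0^S\pi_s\Gamma(T_sa,T_sa)\,ds\bigr)^{1/2}$ and $z_N=\bigl(\sum_n|dY_n|^p\bigr)^{1/p}$. Telescoping the Meyer identity gives $\sum_{m=k+1}^N\mathcal{E}_k(dY_m^2)=2\mathcal{E}_k\int_0^{t_{N-k}}\pi_s\Gamma\,ds\leq\mathcal{E}_k(x_N^2)$, while $dY_k^2\leq\mathcal{E}_k(z_N^2)$ follows from operator monotonicity of $t\mapsto t^{2/p}$ for $p\geq 2$. The strong good-$\lambda$ testing conditions hold, so Theorem~\ref{main-theorem} yields $\|Y_N\|_p\leq C_p(\|x_N\|_p^2+\|z_N\|_p^2)^{1/2}$ with $C_p=O(p)$. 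Vanishing $p$-th variation forces $\|z_N\|_p\to 0$ as the mesh shrinks, and $T_Sa\to 0$ in $L^p$ together with the trace-preservation of $\pi_0$ makes $\|Y_N\|_p\to\|a\|_p$ as $S\to\infty$. Part (ii) is handled by the same discretization, but exploiting the good-$\lambda$ approach in the direction of the reverse Burkholder--Gundy inequality of Subsection~\ref{ABDG}: one obtains a conditional square function estimate $\|s(Y)\|_p\leq C_p\|Y_N\|_p$, which in the continuous limit becomes $\bigl\|\bigl(2\int_0^\infty\pi_s\Gamma(T_sa,T_sa)\,ds\bigr)^{1/2}\bigr\|_p\leq C_p\|a\|_p$.

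In both cases, what remains is to convert the ``horizontal'' square function $\bigl(\int_0^\infty\pi_s\Gamma(T_sa,T_sa)\,ds\bigr)^{1/2}\in\mathcal{M}$ into the ``vertical'' one $\bigl(\int_0^\infty\Gamma(T_sa,T_sa)\,ds\bigr)^{1/2}\in\mathcal{N}$ appearing in the statement. For part (ii) this is easy: assumption \eqref{Mdil2} gives $\mathcal{E}_{0]}(\pi_s(\cdot))=\pi_0(T_s(\cdot))$, so combining the $L^{p/2}$-contractivity of $\mathcal{E}_{0]}$ with the trace-preservation of $\pi_0$ and the consequence $T_s\Gamma(T_sa,T_sa)\geq\Gamma(T_{2s}a,T_{2s}a)$ of $\Gamma^2\geq 0$ (whence $\int_0^\infty T_s\Gamma(T_sa,T_sa)\,ds\geq\tfrac12\int_0^\infty\Gamma(T_ua,T_ua)\,du$ via the substitution $u=2s$) produces
$$\Bigl\|\int_0^\infty\Gamma(T_ua,T_ua)\,du\Bigr\|_{p/2}\leq 2\Bigl\|\int_0^\infty\pi_s\Gamma(T_sa,T_sa)\,ds\Bigr\|_{p/2},$$
and taking square roots finishes (ii).

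The analogous comparison runs in the wrong direction for part (i), and this is the main technical obstacle I foresee. My plan is to circumvent it by enlarging the algebra further (for instance, tensoring with a matrix factor in the style of the reverse Burkholder--Gundy argument of Subsection~\ref{ABDG}, or with a Brownian-motion factor as in Lemma~\ref{MM9}) and designing an auxiliary self-adjoint martingale $\widetilde Y$ whose conditional quadratic variation recovers the \emph{integrated} Meyer identity $T_S(a^2)-(T_Sa)^2=2\int_0^ST_{S-r}\Gamma(T_ra,T_ra)\,dr$ after applying $\pi_0$; the vertical square function from the statement then enters directly as the natural $x_N$ in the good-$\lambda$ bound for $\widetilde Y$. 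The resulting constant should retain $O(p)$-growth, improving the $O(p^2)$ bound of \cite[Theorem~2.4.10]{JM}.
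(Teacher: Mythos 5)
Your part (ii) is essentially the paper's argument (matrix/Rademacher amplification to produce a martingale whose conditioned square function is $\int_0^S 2\pi_r\Gamma(T_ra,T_ra)\,dr$, followed by the conversion via $\mathcal{E}_{0]}$, the condition $\Gamma^2\geq 0$ and the substitution $u=2s$), and the conversion step you give there is correct. The problem is part (i), where you yourself flag the gap and do not close it: running the good-$\lambda$ machinery on $Y_n=\pi_{t_{N-n}}(T_{t_{N-n}}a)-\pi_S(T_Sa)$ with $x_N=\sqrt2\bigl(\int_0^S\pi_s\Gamma(T_sa,T_sa)\,ds\bigr)^{1/2}$ only yields $\|a\|_p\lesssim_p\|(\int_0^\infty\pi_s\Gamma\,ds)^{1/2}\|_{L^p(\mathcal{M})}$, i.e.\ the \emph{horizontal} square function on the right. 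Since contractivity of $\mathcal{E}_{0]}$ gives the comparison vertical $\lesssim$ horizontal and not the reverse, this does not imply \eqref{mainsemigr}; your proposed fix (an unspecified auxiliary martingale $\widetilde Y$ on an enlarged algebra) is a plan, not a proof.

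The paper's resolution is different and worth noting. For (i) one does not use the martingale $\pi_t(T_ta)$ at all, but the ``doubled-time'' reverse martingale built piecewise from $\pi_s(T_{s+s_{j+1}}a)$ on each subinterval $[s_j,s_{j+1}]$ of an outer partition. For this process the conditional quadratic variation of an increment equals $\pi_{t_{N-k+1}}\int 2T_{t_{N-k+1}-r}\Gamma(T_{r+s_{j+1}}a,T_{r+s_{j+1}}a)\,dr$, and $\Gamma^2\geq 0$ bounds this by $2(t_{N-k+1}-t_{N-k})\,\mathcal{E}_{[t_{N-k+1}}\pi_0\bigl(\Gamma(T_{s_{j+1}}a,T_{s_{j+1}}a)\bigr)$ --- so the testing operator $x_N$ can be taken to be $\pi_0$ of a Riemann sum for the \emph{vertical} square function, whose $L^p$-norm is computed in $\mathcal{N}$ because $\pi_0$ is trace preserving. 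The price is that the terminal value of this martingale is no longer $\pi_0(a)-\pi_S(T_Sa)$; the paper recovers $\|a\|_p$ (up to the factor $2/3$) by pairing against a norming element $v$ and evaluating $2\int_0^\infty\tau(\Gamma(T_rv^*,T_{2r}a))\,dr=\tfrac23\tau(va)$ via the spectral resolution of $A$. Both of these steps (the doubled-time construction and the duality identification of the left-hand side) are absent from your proposal, and without some substitute for them part (i) does not go through.
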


\begin{remark}
The above inequalities were proved in \cite[Theorem 2.4.10]{JM} with constants of worse order; the bound \eqref{mainsemigr} was obtained by martingale methods, while the reverse estimate was established with the use of $H^\infty$ calculus. Our approach to both inequalities exploits the good-$\lambda$ technique, which allows the improvement of the orders of constants to be linear. Roughly speaking, the proof of the left inequality presented in \cite{JM} rests on using twice the dual version of Doob's inequality; the advantage of the good-$\lambda$ method is that it enables to combine this double application into a single step.
\end{remark}

\noindent{\textbf{Part I. On the inequality \eqref{mainsemigr}.}} We will study the estimate for $p>2$ only (for $p=2$ the modification is straightforward). Let $S$ be a fixed positive number and let $\{0=s_0<s_1<\ldots<s_M=S\}$ be an arbitrary partition of the interval $[0,S]$. The noncommutative process $(u_s)_{s\in [0,S]}$, given for $s\in [s_j,s_{j+1}]$ by the formula
$$ u_s=\pi_s\big(T_{s+s_{j+1}}a\big)-\pi_{s_{j+1}}\big(T_{2s_{j+1}}a\big)+\sum_{k=j+1}^{M-1}\bigg(\pi_{s_k}\big(T_{s_k+s_{k+1}}a\big)-\pi_{s_{k+1}}\big(T_{2s_{k+1}}a\big)\bigg),$$
is a reverse-time martingale adapted to the filtration $(\mathcal{M}_{[s})_{s\in [0,S]}$. Indeed, the martingale property
$$ \mathcal{E}_{[s}u_t=u_s\qquad \mbox{for all }t<s$$
follows easily from \eqref{Mdil} (it is enough to check it for both $s,\,t$ belonging to some interval $[s_j,s_{j+1}]$).  Directly from our assumption on the semigroup, the $p$-variation of $u$ on $[s_j,s_{j+1}]$ vanishes, and hence so does the full $p$-th variation (i.e., on the whole interval $[0,S]$).
Let $\{0=t_0<t_1<t_2<\ldots<t_N=S\}$ be  a refinement of $\{s_j\}$. Introduce the associated discrete-time martingale $y=(y_n)_{n=0}^N$ given by $y_n=u_{t_{N-n}}$, $n=0,\,1,\,2,\,\ldots,\,N$. This sequence starts from $0$ and is adapted to the filtration  $(\mathcal{M}_n)_{n=0}^N=(\mathcal{M}_{[t_{N-n}})_{n=0}^N$. Furthermore, consider the operators
$$ x_N=\pi_0\left(\sum_{j=0}^{M-1}2(s_{j+1}-s_j)\Gamma(T_{s_{j+1}}a,T_{s_{j+1}}a)\right)^{1/2},\qquad z_N=\left(\sum_{k=0}^N |dy_k|^p\right)^{1/p}.$$

\begin{lemma}\label{gc1}
The triple $(x_N,y,z_N)$ satisfies the good-$\lambda$ testing conditions.
\end{lemma}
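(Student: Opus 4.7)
The plan is to verify the \emph{strong} good-$\lambda$ testing conditions \eqref{strong test condition}, which by the remark after Definition \ref{strong test condition def} imply the conditions of Definition \ref{test condition}. The pointwise inequality $dy_k^2\leq \mathcal{E}_k(z_N^2)$ is immediate: operator monotonicity of $t\mapsto t^{2/p}$ (valid for $p\geq 2$) gives $z_N^2\geq dy_k^2$, and $dy_k\in\mathcal{M}_k$ in the reverse-time filtration, so the bound survives $\mathcal{E}_k$.

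For the other strong inequality, the martingale identity yields $\sum_{m=k+1}^N\mathcal{E}_k(dy_m^2)=\mathcal{E}_k(y_N^2-y_k^2)$. Writing $r_n=t_{N-n}$, one has $r_n<r_{n-1}$ and $\mathcal{M}_n=\mathcal{M}_{[r_n}$. On each $[s_j,s_{j+1}]$ the process $u_s$ differs from $\widetilde u_s:=\pi_s(T_{s+s_{j+1}}a)$ by an additive constant, and by \eqref{Mdil} $\widetilde u$ is a reverse-time martingale on this interval. Since $\{t_n\}$ refines $\{s_j\}$, each pair $r_n<r_{n-1}$ lies inside a single $[s_j,s_{j+1}]$, the additive constant cancels in the square of the increment, and using that $\pi_s$ is a $*$-homomorphism a direct calculation yields
\[
\mathcal{E}_{n-1}(dy_n^2)=\pi_{r_{n-1}}\!\left(T_{r_{n-1}-r_n}((T_{r_n+s_{j+1}}a)^2)-(T_{r_{n-1}+s_{j+1}}a)^2\right).
\]

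The decisive ingredient is the operator identity
\[
T_\tau(b^2)-(T_\tau b)^2=2\int_0^\tau T_{\tau-u}\Gamma(T_ub,T_ub)\,du,
\]
which is obtained by differentiating $g(u)=T_{\tau-u}((T_ub)^2)$ and reading off $g'(u)=-2T_{\tau-u}\Gamma(T_ub,T_ub)$ from the defining formula for $\Gamma$. Applying it with $b=T_{r_n+s_{j+1}}a$, $\tau=r_{n-1}-r_n$, and then invoking the dilation formula $\pi_rT_{r-v}=\mathcal{E}_{[r}\pi_v$, the display above becomes
\[
\mathcal{E}_{n-1}(dy_n^2)=2\mathcal{E}_{n-1}\!\int_{r_n}^{r_{n-1}}\pi_v\!\left(\Gamma(T_{v+s_{j+1}}a,T_{v+s_{j+1}}a)\right)\,dv.
\]
Taking $\mathcal{E}_k$, summing over $m\geq k+1$ (whose subintervals tile $[0,r_k]$), and applying the dilation formula once more produces
\[
\sum_{m=k+1}^N\mathcal{E}_k(dy_m^2)=2\pi_{r_k}\!\int_0^{r_k}T_{r_k-v}\Gamma(T_{v+\rho(v)}a,T_{v+\rho(v)}a)\,dv,
\]
where $\rho(v):=s_{j+1}$ for $v\in[s_j,s_{j+1}]$.

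To finish, I would invoke the hypothesis $\Gamma^2\geq 0$: writing $T_{v+\rho(v)}a=T_v(T_{\rho(v)}a)$ and applying $\Gamma^2\geq 0$ with $x=T_{\rho(v)}a$ gives $\Gamma(T_vx,T_vx)\leq T_v\Gamma(x,x)$, after which the semigroup composition $T_{r_k-v}T_v=T_{r_k}$ dominates the integrand pointwise by $T_{r_k}\Gamma(T_{\rho(v)}a,T_{\rho(v)}a)$. Integrating, enlarging the interval to $[0,S]$ by positivity, and applying $\pi_{r_k}$ matches the bound with $\mathcal{E}_k(x_N^2)=\pi_{r_k}T_{r_k}\!\left(\sum_{j=0}^{M-1}2(s_{j+1}-s_j)\Gamma(T_{s_{j+1}}a,T_{s_{j+1}}a)\right)$, completing the verification. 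I expect the main obstacle to be the bookkeeping: isolating the piecewise reverse-martingale segments of $u$, tracking the reverse-time conditional expectations correctly through the sum, and arranging the emergent $\Gamma$-integral so that the Riemann sum defining $x_N^2$ materializes at the final step.
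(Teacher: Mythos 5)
Your proof is correct and follows essentially the same route as the paper: the identity $T_{t-r}\big((T_r b)^2\big)-(T_t b)^2=2\int_r^t T_{t-u}\Gamma(T_u b,T_u b)\,du$, the hypothesis $\Gamma^2\geq 0$, and the dilation formula $\pi_s T_{s-t}=\mathcal{E}_{[s}\pi_t$ are exactly the ingredients used there. The only (harmless) difference is one of packaging: you verify the strong testing conditions \eqref{strong test condition} as a pointwise operator inequality, whereas the paper carries the projections $R_{n-1}-R_n$ through the same computation to check condition (i) of Definition \ref{test condition} directly.
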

\begin{proof}
 The second requirement is obviously satisfied (we have $dy_k^2\leq z_N^2$ for each $k$), so we focus on the first condition. If $n<k\leq N$, then we have
\begin{equation}\label{chainsem}
\begin{split}
&\tau((R_{n-1}-R_n)dy_kR_{n-1}dy_k(R_{n-1}-R_n))\\
&\leq \tau((R_{n-1}-R_{n})dy_k^2)\\
&=\tau\left[(R_{n-1}-R_n)\bigg(\pi_{t_{N-k}}\big(T_{t_{N-k}+s_{j+1}}a\big)-\pi_{t_{N-k+1}}\big(T_{t_{N-k+1}+s_{j+1}}a\big)\bigg)^2\right],
\end{split}
\end{equation}
where $s_{j+1}$ is the unique element of the partition $\{s_m\}$ such that both points $t_{N-k},t_{N-k+1}$ belong  to $[s_j,s_{j+1}]$. Let us insert the conditional expectation with respect to $\mathcal{M}_{k-1}=\mathcal{M}_{[t_{N-k+1}}$ under the latter trace. The projection $R_n-R_{n-1}$ belongs to $\mathcal{M}_{k-1}$ and
\begin{align*}
 &\mathcal{E}_{[t_{N-k+1}}\bigg(\pi_{t_{N-k}}\big(T_{t_{N-k}+s_{j+1}}a\big)-\pi_{t_{N-k+1}}\big(T_{t_{N-k+1}+s_{j+1}}a\big)\bigg)^2\\
 &=\mathcal{E}_{[t_{N-k+1}}\bigg(\pi_{t_{N-k}}\big(T_{t_{N-k}+s_{j+1}}a\big)\bigg)^2-\bigg(\pi_{t_{N-k+1}}\big(T_{t_{N-k+1}+s_{j+1}}a\big)\bigg)^2\\
 &=\mathcal{E}_{[t_{N-k+1}}\pi_{t_{N-k}}\big(T_{t_{N-k}+s_{j+1}}a\big)^2-\pi_{t_{N-k+1}}\big(T_{t_{N-k+1}+s_{j+1}}a\big)^2\\
 &=\pi_{t_{N-k+1}}\bigg(T_{t_{N-k+1}-t_{N-k}}\big(T_{t_{N-k}+s_{j+1}}a\big)^2-\big(T_{t_{N-k+1}+s_{j+1}}a\big)^2\bigg).
\end{align*}
For any $t>0$, the function $f(r)=T_{t-r}((T_ra)^2)$ is differentiable and we have
$$ f'(r)=T_{t-r}A(T_ra)^2-T_{t-r}((AT_ra)T_ra)-T_{t-r}(T_ra(AT_ra))=-2T_{t-r}\Gamma(T_ra,T_ra).$$
Consequently, we see that for any $s<t$,
$$ T_{t-s}((T_sa)^2)-(T_ta)^2=\int_s^t 2T_{t-r}\Gamma(T_ra,T_ra)\mbox{d}r$$
and hence
\begin{align*}
 &\pi_{t_{N-k+1}}\bigg(T_{t_{N-k+1}-t_{N-k}}\big(T_{t_{N-k}+s_{j+1}}a\big)^2-\big(T_{t_{N-k+1}+s_{j+1}}a\big)^2\bigg)\\
 &=\pi_{t_{N-k+1}}\bigg(T_{t_{N-k+1}-t_{N-k}}\big(T_{t_{N-k}}T_{s_{j+1}}a\big)^2-\big(T_{t_{N-k+1}}T_{s_{j+1}}a\big)^2\bigg)\\
 &=\pi_{t_{N-k+1}}\int_{t_{N-k}}^{t_{N-k+1}} 2T_{t_{N-k+1}-r}\Gamma\big(T_{r+s_{j+1}}a,T_{r+s_{j+1}}a\big)\mbox{d}r.
\end{align*}
Since $\Gamma^2\geq 0$, the above expression does not exceed
\begin{align*}
&\pi_{t_{N-k+1}}T_{t_{N-k+1}}\int_{t_{N-k}}^{t_{N-k+1}} 2\Gamma\big(T_{s_{j+1}}a,T_{s_{j+1}}a\big)\mbox{d}r\\
&=2(t_{N-k+1}-t_{N-k})\mathcal{E}_{[t_{N-k+1}}\pi_0 \left(\Gamma\big(T_{s_{j+1}}a,T_{s_{j+1}}a\big)\right).
\end{align*}
Combining the above observations with \eqref{chainsem} yields
\begin{align*}
&\tau((R_{n-1}-R_n)dy_kR_{n-1}dy_k(R_{n-1}-R_n))\\
&\qquad \qquad \leq 2\tau\bigg((R_{n-1}-R_n)(t_{N-k+1}-t_{N-k})\pi_0\left(\Gamma\big(T_{s_{j+1}}a,T_{s_{j+1}}a\big)\right)\bigg)
\end{align*}
and therefore
\begin{align*}
&\sum_{k=n+1}^N \tau((R_{n-1}-R_n)dy_kR_{n-1}dy_k(R_{n-1}-R_n))\\
&\qquad \qquad \leq \sum_{j=0}^{M-1} \tau\bigg((R_{n-1}-R_n)(s_{j+1}-s_j)\pi_0\left(2\Gamma\big(T_{s_{j+1}}a,T_{s_{j+1}}a\big)\right)\bigg).
\end{align*}
Summing over all $n$ we see that the good-$\lambda$ testing condition (i) is satisfied.
\end{proof}

\begin{proof}[Proof of the inequality \eqref{mainsemigr}]
The application of good-$\lambda$ approach (i.e., the inequality \eqref{main_moment}) gives
\begin{align*}
 &\left\|\sum_{k=0}^{M-1}\bigg(\pi_{s_k}\big(T_{s_k+s_{k+1}}a\big)-\pi_{s_{k+1}}\big(T_{2s_{k+1}}a\big)\bigg)\right\|_{L^p(\mathcal{M})}\\
&\leq C_p\left(\left\|\pi_0\left(\sum_{j=0}^{M-1}2(s_{j+1}-s_j)\Gamma(T_{s_{j+1}}a,T_{s_{j+1}}a)\right)^{1/2}\right\|_{L^p(\mathcal{M})}^2\right.\\
&\qquad \qquad \qquad \qquad \qquad \qquad \qquad \qquad \qquad \left.+\left(\sum_{k=0}^N \|dy_k\|_{L^p(\mathcal{M})}^p\right)^{2/p}\right)^{1/2}.
\end{align*}
Now we go to the limit with the partition $\{t_n\}$. The $p$-th variation of $u$ vanishes, so the sum $ \left(\sum_{k=0}^N \|dy_k\|_{L^p(\mathcal{M})}^p\right)^{2/p}$ converges to $0$. This establishes the bound
\begin{align*}
 &\left\|\sum_{k=0}^{M-1}\bigg(\pi_{s_k}\big(T_{s_k+s_{k+1}}a\big)-\pi_{s_{k+1}}\big(T_{2s_{k+1}}a\big)\bigg)\right\|_{L^p(\mathcal{M})}\\
&\qquad \qquad \qquad \leq C_p\left\|\left(\sum_{j=0}^{M-1}2(s_{j+1}-s_j)\Gamma(T_{s_{j+1}}a,T_{s_{j+1}}a)\right)^{1/2}\right\|_{L^p(\mathcal{N})}.
\end{align*}
If we pass to the limit with the partition $(s_j)_{j=0}^M$, we see that the right-hand side converges to $ C_p\left\|\left(\int_0^S 2\Gamma(T_{r}a,T_{r}a)\mbox{d}r\right)^{1/2}\right\|_{L^p(\mathcal{N})}\leq C_p\left\|\left(\int_0^\infty 2\Gamma(T_{r}a,T_{r}a)\mbox{d}r\right)^{1/2}\right\|_{L^p(\mathcal{N})}.$
To relate the left-hand side to $\|a\|_{L^p(\mathcal N)}$, we pick an operator $v\in \mathcal{A}$ satisfying $\|v\|_{L^{p/(p-1)}}\leq 1$ and $\|a\|_{L^p(\mathcal N)}\leq 2\tau(v^*a)$, and compute that
\begin{align*}
&\left\|\sum_{k=0}^{M-1}\bigg(\pi_{s_k}\big(T_{s_k+s_{k+1}}a\big)-\pi_{s_{k+1}}\big(T_{2s_{k+1}}a\big)\bigg)\right\|_{L^p(\mathcal{M})}\\
&\geq \tau\left[\pi_0(v)^*\sum_{j=0}^{M-1} \bigg(\pi_{s_j}(T_{s_j+s_{j+1}}a)-\pi_{s_{j+1}}(T_{2s_{j+1}}a)\bigg)\right]\\
&=\tau\left[\sum_{j=0}^{M-1} \bigg(\pi_{s_j}(T_{s_j}v^*)-\pi_{s_{j+1}}(T_{s_{j+1}}v^*)\bigg)\bigg(\pi_{s_j}(T_{s_j+s_{j+1}}a)-\pi_{s_{j+1}}(T_{2s_{j+1}}a)\bigg)\right]\\
&=2\sum_{j=0}^{M-1}\int_{s_j}^{s_{j+1}} \tau\bigg(\pi_r\Gamma(T_rv^*,T_{r+s_{j+1}}a)\bigg)\mbox{d}r\\
&=2\sum_{j=0}^{M-1}\int_{s_j}^{s_{j+1}} \tau\bigg(\Gamma(T_rv^*,T_{r+s_{j+1}}a)\bigg)\mbox{d}r.
\end{align*}
Passing to the limit with the partition $(s_j)_{j=0}^M$ and then letting $S\to \infty$, we see that the latter expression converges to
$$ 2\int_0^\infty \tau(\Gamma(T_rv^*,T_{2r}a)\big)\mbox{d}r=2\int_0^\infty \tau(vAT_{3r}a)\mbox{d}r=\frac{2}{3}\tau(va),$$
where the last passage follows easily from the spectral resolution for $A$. Putting all the above facts together, we obtain the desired bound
\begin{align*}
\|a\|_p\leq 3C_p\left\|\left(\int_0^\infty 2\Gamma(T_ra,T_ra)\mbox{d}r\right)^{1/2}\right\|_{L^p(\mathcal{N})}. \qquad \qquad \qedhere
\end{align*}
\end{proof}

\noindent\textbf{Part II. On the inequality \eqref{mainsemigr22}.} The reasoning will be similar to that above. Suppose that $p>2$, fix $S>0$ and let $0=t_0<t_1<t_2<\ldots<t_M=S$ be an arbitrary partition of $[0,S]$. By the assumption on the Markov dilation, the process $u=(u_s)_{s\in [0,S]}$ given by $u_s=\pi_s(T_sa)$ is martingale adapted to $(\mathcal{M}_{[s})_{s\in [0,S]}$, whose $p$-th variation on $[0,S]$ is zero.
Let  $\e_0$, $\e_1$, $\ldots$, $\e_n$ be the sequence of independent Rademacher variables on some given probability space $(\Omega,\F,\mathbb{P})$. We embed $\mathcal{M}$ into a larger von Neumann algebra $\widetilde{\mathcal{M}}=\mathbb{M}_{N+1}\overline{\otimes} L^\infty(\Omega,\F,\mathbb{P})\overline{\otimes} \mathcal{M}$ equipped with the usual tensor trace $\tilde{\tau}$ and the filtration $(\mathbb{M}_{N+1}\overline{\otimes} L^\infty(\Omega,\F_n,\mathbb{P})\overline{\otimes} \mathcal{M}_n)_{n=0}^N$. Here, for each $n$, $\F_n$ is the $\sigma$-algebra generated by $\e_0$, $\e_1$, $\ldots$, $\e_n$ and $\mathcal{M}_n$ is the von Neumann algebra generated by the operators $\pi_t(v)$ for $v\in\mathcal{N}$ and $t\geq t_{N-n}$.
We may treat $\widetilde{\mathcal{M}}$ as the algebra of $(N+1)\times (N+1)$ matrices with entries being random operators. Consider the adapted discrete-time martingales $x=(x_n)_{n=0}^N$ and $y=(y_n)_{n=0}^N$ in $\widetilde{\mathcal{M}}$ determined by the equalities $x_0=e_{1,1}\otimes 1\otimes \pi_{t_N}(T_{t_N}a)$, $y_0=0$ and, for $n=1,\,2,\,\ldots,\,N-1$,
$$ dx_n=(e_{1,1}+e_{n+1,n+1})\otimes 1\otimes \bigg(\pi_{t_{N-n}}(T_{t_{N-n}}a)-\pi_{t_{N-n+1}}(T_{t_{N-n+1}}a)\bigg)$$
and
$$dy_n=\big(e_{1,n+1}+e_{n+1,1}\big)\otimes \e_n \otimes \left(\int_{t_{N-n}}^{t_{N-n+1}} 2\pi_r\Gamma(T_{r}a,T_{r}a)\mbox{d}r\right)^{1/2}.$$
 Finally, set $ z_N=\left(\sum_{n=0}^N |dy_n|^p\right)^{1/p}.$

\begin{lemma}
The triple $(x_N,y,z_N)$ satisfies the good-$\lambda$ testing conditions.
\end{lemma}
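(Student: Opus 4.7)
The plan is to verify the two good-$\lambda$ testing conditions directly from the explicit formulas for $dx_n$ and $dy_n$.

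Condition (ii) will be immediate: since $p\geq 2$, the function $t\mapsto t^{2/p}$ is operator monotone, so from $\sum_{n=0}^{N}|dy_n|^p\geq |dy_k|^p$ we will obtain $z_N^2\geq dy_k^2$, which yields $\tilde{\tau}(Pdy_k^2P)\leq\tilde{\tau}(Pz_N^2P)$ for every projection $P$.

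For condition (i), the heart of the argument will be the identity
$$\mathcal{E}_{k-1}(dy_k^2)=\mathcal{E}_{k-1}(dx_k^2),\qquad k=1,2,\ldots,N. \qquad (\star)$$
Both sides share the same matrix factor $e_{1,1}+e_{k+1,k+1}$: indeed $\varepsilon_k^2=1$ and $(e_{1,k+1}+e_{k+1,1})^2=e_{1,1}+e_{k+1,k+1}=(e_{1,1}+e_{k+1,k+1})^2$, so only the third tensor factor matters. For $dy_k^2$ the third factor equals $\int_{t_{N-k}}^{t_{N-k+1}}2\pi_r\Gamma(T_ra,T_ra)\mbox{d}r$, and the Markov dilation property \eqref{Mdil} will give
$$\mathcal{E}_{[t_{N-k+1}}\bigg(\int_{t_{N-k}}^{t_{N-k+1}}2\pi_r\Gamma(T_ra,T_ra)\mbox{d}r\bigg)=\pi_{t_{N-k+1}}\int_{t_{N-k}}^{t_{N-k+1}}2T_{t_{N-k+1}-r}\Gamma(T_ra,T_ra)\mbox{d}r.$$
For $dx_k^2$ the same expression will arise by expanding the square $(\pi_{t_{N-k}}(T_{t_{N-k}}a)-\pi_{t_{N-k+1}}(T_{t_{N-k+1}}a))^2$, applying \eqref{Mdil} to the cross terms and to the $\pi_{t_{N-k}}$-term, and invoking the fundamental calculus identity $T_{t-s}((T_sa)^2)-(T_ta)^2=\int_s^t 2T_{t-r}\Gamma(T_ra,T_ra)\mbox{d}r$ already employed in the proof of Lemma \ref{gc1}. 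Hence $(\star)$ will hold.

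With $(\star)$ in hand, the rest is routine martingale bookkeeping. For $n<k$, since $R_{n-1}-R_n\in\mathcal{M}_n\subseteq\mathcal{M}_{k-1}$ and $\tilde{\tau}\circ\mathcal{E}_{k-1}=\tilde{\tau}$, we will have
$$\tilde{\tau}\big((R_{n-1}-R_n)dy_kR_{n-1}dy_k(R_{n-1}-R_n)\big)\leq\tilde{\tau}\big((R_{n-1}-R_n)dy_k^2\big)=\tilde{\tau}\big((R_{n-1}-R_n)dx_k^2\big).$$
Summing in $k$ from $n+1$ to $N$ and using the martingale property of $x$ to eliminate the cross terms $dx_jdx_k$, $j\neq k$, against the $\mathcal{M}_n$-measurable projection $R_{n-1}-R_n$, we will obtain
$$\sum_{k=n+1}^N\tilde{\tau}\big((R_{n-1}-R_n)dx_k^2\big)=\tilde{\tau}\big((R_{n-1}-R_n)(x_N-x_n)^2\big)=\tilde{\tau}\big((R_{n-1}-R_n)(x_N^2-x_n^2)\big).$$
A final summation over $n$, together with the nonnegativity $\tilde{\tau}((R_{n-1}-R_n)x_n^2)=\tilde{\tau}(|x_n(R_{n-1}-R_n)|^2)\geq 0$, will deliver the bound $\tilde{\tau}((I-R_N)x_N^2)$ demanded by condition (i). The only non-routine point in the argument is the identity $(\star)$, which plays the role of a continuous-time It\^o-type isometry distilled from the generator calculus combined with the Markov dilation; everything else is a direct trace-and-conditional-expectation manipulation.
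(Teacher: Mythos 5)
Your proposal is correct and follows essentially the same route as the paper: the key identity $\mathcal{E}_{k-1}(dy_k^2)=\mathcal{E}_{k-1}(dx_k^2)$ is established exactly as in the text, via the reverse Markov dilation property \eqref{Mdil} together with the identity $T_{t-s}((T_sa)^2)-(T_ta)^2=\int_s^t 2T_{t-r}\Gamma(T_ra,T_ra)\,\mbox{d}r$, and the remaining trace/conditional-expectation bookkeeping (dropping $R_{n-1}\leq I$, telescoping $\sum_k \tilde\tau((R_{n-1}-R_n)dx_k^2)=\tilde\tau((R_{n-1}-R_n)(x_N^2-x_n^2))$, discarding the positive term $\tilde\tau((R_{n-1}-R_n)x_n^2)$) matches the paper's argument, as does the operator-monotonicity verification of condition (ii).
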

\begin{proof}
We have already seen in the previous sections that the testing condition (ii) is satisfied. To check (i), we argue as in the proof of Lemma \ref{gc1} to obtain
\begin{align*}
&\mathcal{E}_{k-1}^{\widetilde{\mathcal{M}}}(dx_k^2)\\
&=(e_{1,1}+e_{k+1,k+1})\otimes 1\otimes \mathcal{E}_{[t_{N-k+1}}\bigg(\pi_{t_{N-k}}(T_{t_{N-k} }a)-\pi_{t_{N-k+1}}(T_{t_{N-k+1} }a)\bigg)^2\\
&=(e_{1,1}+e_{k+1,k+1})\otimes 1\otimes \pi_{t_{N-k+1}}\int_{t_{N-k}}^{t_{N-k+1}} 2T_{t_{N-k+1}-r}\Gamma\big(T_{r}a,T_{r}a\big)\mbox{d}r\\
&=(e_{1,1}+e_{k+1,k+1})\otimes 1\otimes \mathcal{E}_{[t_{N-k+1}}\int_{t_{N-k}}^{t_{N-k+1}} 2\pi_r\Gamma\big(T_{r}a,T_{r}a\big)\mbox{d}r\\
&=\mathcal{E}_{k-1}^{\widetilde{\mathcal{M}}}(dy_k^2)
\end{align*}
for $1\leq k\leq N$.
Consequently, we may write
\begin{align*}
\sum_{n=0}^N \sum_{k=n+1}^N \tilde{\tau}\bigg((R_{n-1}-R_n)dy_kR_{n-1}dy_k\bigg)&\leq \sum_{n=0}^N \sum_{k=n+1}^N \tilde{\tau}\bigg((R_{n-1}-R_n)dy_k^2\bigg)\\
&= \sum_{n=0}^N \sum_{k=n+1}^N \tilde{\tau}\bigg((R_{n-1}-R_n)dx_k^2\bigg)\\
&= \sum_{n=0}^N \tilde{\tau}\bigg((R_{n-1}-R_n)(x_N^2-x_n^2)\bigg)\\
&\leq \tilde{\tau}((I-R_N)x_N^2)
\end{align*}
and the claim follows.
\end{proof}

\begin{proof}[Proof of the estimate \eqref{mainsemigr22}]
The application of good-$\lambda$ approach yields the inequality
\begin{equation}\label{m&m9}
 \|y_N\|_{L^p(\widetilde{\mathcal{M}})}\leq C_p\left(\|x_N\|_{L^p(\widetilde{\mathcal{M}})}^2+\|z_N\|_{L^p(\widetilde{\mathcal{M}})}^2\right)^{1/2}.
\end{equation}
Let us first deal with the expression on the left-hand side. We easily check that
$$ y_N^2\geq e_{1,1}\otimes 1\otimes \int_0^S 2\pi_r \Gamma(T_{r }a,T_{r }a)\mbox{d}r $$
and therefore, applying \eqref{Mdil2} and the condition $\Gamma^2\geq 0$,
\begin{align*}
 \mathcal{E}_{0]}\int_0^S 2\pi_r \Gamma(T_{r }a,T_{r }a)\mbox{d}r&=\pi_0\int_0^S 2T_r \Gamma(T_{r }a,T_{r }a)\mbox{d}r\\
 &\geq \pi_0\int_0^S 2 \Gamma(T_{2r}a,T_{2r}a)\mbox{d}r=\pi_0\int_0^{2S}\Gamma(T_ra,T_ra)\mbox{d}r.
\end{align*}
Since the conditional expectation is a contraction on $L^{p/2}$, we get
\begin{equation}\label{innnr}
 \left\|\int_0^{2S} \Gamma(T_ra,T_ra)\mbox{d}r\right\|_{L^{p/2}(\mathcal{N})}\leq \|y_N^2\|_{L^{p/2}(\widetilde{\mathcal{M}})}=\|y_N\|_{L^{p}(\widetilde{\mathcal{M}})}^2.
\end{equation}
To handle the right-hand side of \eqref{m&m9}, we pass to the limit with the partition $\{t_j\}$. We get
\begin{align*}
\|x_N\|_{L^p(\widetilde{\mathcal{M}})}^p&=\left\|e_{1,1}\otimes 1\otimes \pi_{t_0}(T_0a)\right\|_{L^p(\widetilde{\mathcal{M})}}^p\\
&\quad +\sum_{n=1}^{N}\left\|e_{n+1,n+1}\otimes 1\otimes \bigg(\pi_{t_{N-n}}(T_{t_{N-n}}a)-\pi_{t_{N-n+1}}(T_{t_{N-n+1}}a)\bigg)\right\|_{L^p(\widetilde{\mathcal{M}})}^p\\
&\to \left\|e_{1,1}\otimes 1\otimes \pi_0(a)\right\|_{L^p(\widetilde{\mathcal{M})}}^p=\|a\|_{L^p(\mathcal{N})}^p.
\end{align*}
Furthermore, we have
\begin{align*}
 \|dy_n\|_{L^p(\widetilde{\mathcal{M}})}&=2^{1/p}\left\|\int_{t_{N-n}}^{t_{N-n+1}} 2\pi_r\Gamma(T_{r}a,T_{r}a)\mbox{d}r\right\|_{L^{p/2}(\mathcal{M})}^{1/2}\\
&\leq 2^{1/p}(t_{N-n+1}-t_{N-n})^{1/2}\sup_{r\in [0,S]}\|\Gamma(T_ra,T_ra)\|_{L^{p/2}(\mathcal{N})}^{1/2},
\end{align*}
which immediately gives $\|z_N\|_{L^p(\widetilde{\mathcal{M}})}\to 0$.
Combining the above observations with \eqref{innnr} and \eqref{m&m9} gives
$$  \left\|\left(\int_0^{2S} \Gamma(T_ra,T_ra)\mbox{d}r\right)^{1/2}\right\|_{L^{p}(\mathcal{N})}\leq C_p\|a\|_{L^p(\mathcal{N})},$$
which completes the proof.
\end{proof}

\section*{Acknowledgment} {The authors thank Dmitriy Zanin and Dejian Zhou for kindly pointing out the simpler conditions \eqref{strong test condition} and the careful reading of the  paper.}

\end{document}